\def\blfootnote{\xdef\@thefnmark{}\@footnotetext}
\newcommand\ccnote{
    \blfootnote{\copyright\,\, Mark Allen, Dennis Kriventsov, and Robin Neumayer}
    \blfootnote{\ccLogo\, \ccAttribution\,\, Licensed under a \href{https://creativecommons.org/licenses/by/4.0/}{Creative Commons Attribution License (CC-BY)}.}
}
\numberwithin{equation}{section}
\renewcommand{\le}{\leqslant}
\renewcommand{\leq}{\leqslant}
\renewcommand{\geq}{\geqslant}
\renewcommand{\mathbb}{\varmathbb}
\newtheorem{theorem}{Theorem}[section]
\newtheorem{lemma}[theorem]{Lemma}
\newtheorem{corollary}[theorem]{Corollary}
\newtheorem{proposition}[theorem]{Proposition}
\newtheorem{definition}[theorem]{Definition}
\newtheorem{remark}[theorem]{Remark}
\newcommand{\R}{{\mathbb{R}}}
\newcommand{\e}{{\epsilon}}
\newcommand{\g}{{\gamma}}
\newcommand{\pa}{{\partial}}
\newcommand{\vphi}{{\varphi}}
\def\Id{\text{Id}}
\def\divv{\text{div}}
\newcommand{\vertiii}[1]{{\left\vert\kern-0.25ex\left\vert\kern-0.25ex\left\vert #1 
    \right\vert\kern-0.25ex\right\vert\kern-0.25ex\right\vert}}
\newcommand{\vol}{{\rm{vol}}}
\newcommand{\Om}{\Omega}
\newcommand{\om}{\omega}
\newcommand{\na}{{\nabla}}
\def\b{\beta}
\def\e{\varepsilon}
\def\r{\rho}
\def\w{\omega}
\def\g{\gamma}
\def\sm{\setminus}
\def\8{\infty}
\newcommand{\B}{B}
\newcommand{\BB}{{\mathcal{B}}}
\newcommand{\tor}{{\text{tor}}}
\address{Mark Allen, Brigham Young University, Department of Mathematics, Provo, UT}
\email{allen@math.byu.edu}
\address{Dennis Kriventsov, Rutgers University, Department of Mathematics, Piscataway, NJ} 
\email{dnk34@math.rutgers.edu}
\address{Robin Neumayer, Carnegie Mellon University, Department of Mathematics, Pittsburgh, PA}
\email{neumayer@cmu.edu}
\begin{document}

\thispagestyle{empty}

\begin{minipage}{0.28\textwidth}
\begin{figure}[H]
\includegraphics[width=2.5cm,height=2.5cm,left]{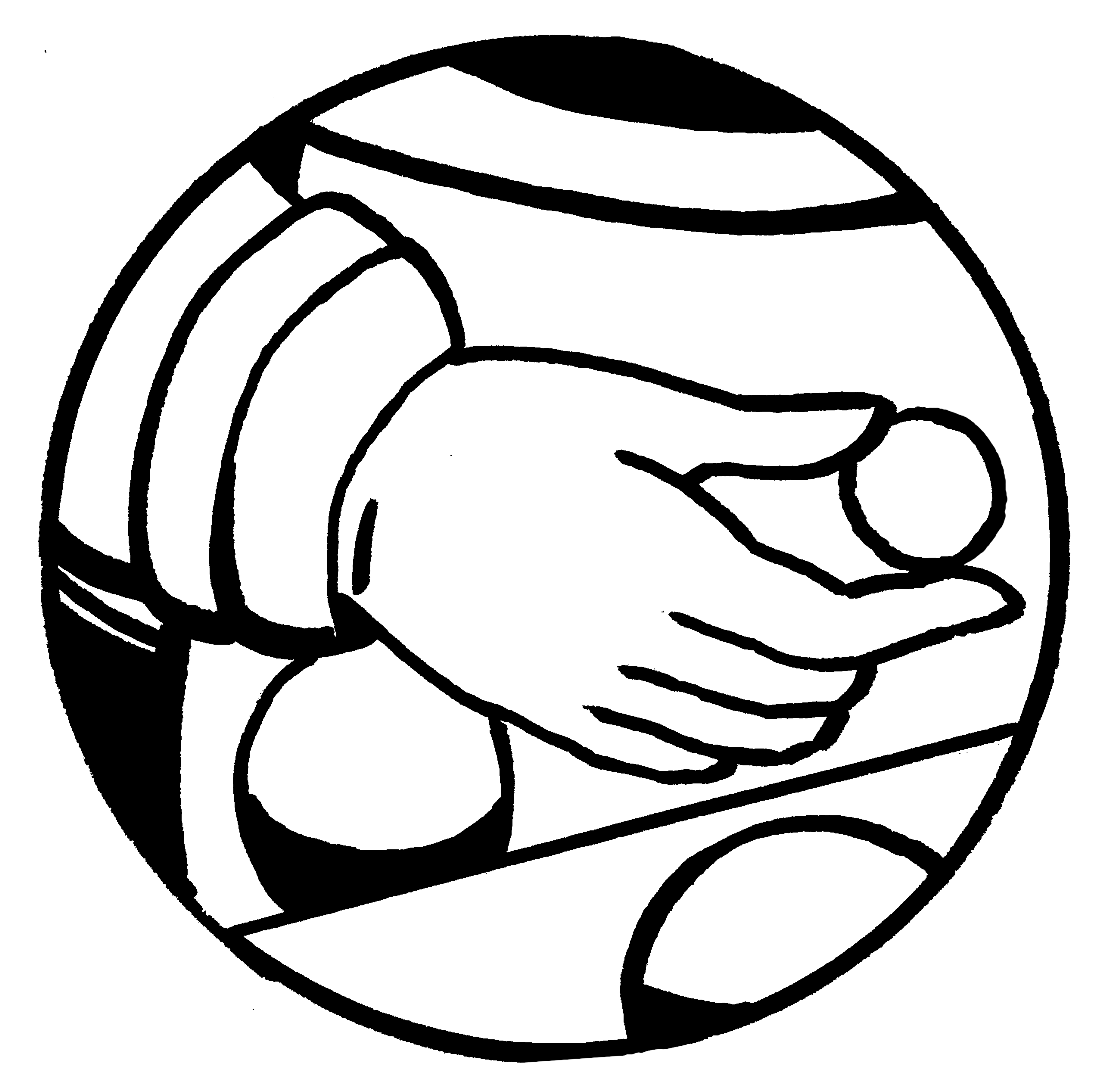}
\end{figure}
\end{minipage}
\begin{minipage}{0.7\textwidth} 
\begin{flushright}
Ars Inveniendi Analytica (2023), Paper No. 1, 49 pp.
\\
DOI 10.15781/e7f3-a487
\\
ISSN: 2769-8505
\end{flushright}
\end{minipage}

\ccnote

\vspace{1cm}


\begin{center}
\begin{huge}
\textit{Sharp Quantitative Faber-Krahn Inequalities and the}

\textit{Alt-Caffarelli-Friedman \\ Monotonicity Formula}

\end{huge}
\end{center}

\vspace{1cm}


\begin{minipage}[t]{.28\textwidth}
\begin{center}
{\large{\bf{Mark Allen}}} \\
\vskip0.15cm
\footnotesize{Brigham Young University}
\end{center}
\end{minipage}
\hfill
\noindent
\begin{minipage}[t]{.28\textwidth}
\begin{center}
{\large{\bf{Dennis Kriventsov}}} \\
\vskip0.15cm
\footnotesize{Rutgers University}
\end{center}
\end{minipage}
\hfill
\noindent
\begin{minipage}[t]{.28\textwidth}
\begin{center}
{\large{\bf{Robin Neumayer}}} \\
\vskip0.15cm
\footnotesize{Carnegie Mellon University} 
\end{center}
\end{minipage}

\vspace{1cm}


\begin{center}
\noindent \em{Communicated by Guido De Philippis}
\end{center}
\vspace{1cm}


\noindent \textbf{Abstract.} \textit{The objective of this paper is two-fold. First, we establish new sharp quantitative estimates for Faber-Krahn inequalities on simply connected space forms. In these spaces, geodesic balls uniquely minimize the first  eigenvalue of the Dirichlet Laplacian among all sets of a fixed volume. We prove that the gap between the first eigenvalue of a given set $\Omega$ and that of the ball quantitatively controls both the $L^1$ distance of this set from a ball {\it and} the $L^2$ distance between the corresponding eigenfunctions:
\[
\lambda_1(\Omega) - \lambda_1(B) \gtrsim |\Omega \Delta B|^2 + \int |u_{\Omega} -u_B|^2,
\] 
where $B$ denotes the nearest geodesic ball to $\Omega$ with $|B|=|\Omega|$ and $u_\Omega$ denotes the first eigenfunction with suitable normalization. On Euclidean space, this extends a result of Brasco-De Phillipis-Velichkov; the eigenfunction control largely builds upon new regularity results for minimizers of critically perturbed Alt-Cafarelli type functionals in our companion paper. On the round sphere and hyperbolic space, the present results are the first sharp quantitative results with respect to any distance; here the local portion of the analysis is based on new implicit spectral analysis techniques. \\
Second, we apply these sharp quantitative Faber-Krahn inequalities in order to establish a quantitative form of the Alt-Caffarelli-Friedman (ACF) monotonicity formula. A powerful tool in the study of free boundary problems, the ACF monotonicity formula is nonincreasing with respect to its scaling parameter for any pair of admissible subharmonic functions, and is constant if and only if the pair comprises two linear functions truncated to complementary half planes. We show that the energy drop in the ACF monotonicity formula from one scale to the next controls how close a pair of admissible functions is from  a pair of complementary half-plane solutions. In particular, when the square root of the energy drop summed over all scales is small, our result implies the existence of tangents (unique blowups) of these functions.}
\vskip0.3cm

\noindent \textbf{Keywords.} Faber-Krahn, ACF monotonicity formula, selection principle. 
\vspace{0.5cm}


\section{Introduction}
Let $(M,g)$ be a smooth Riemannian $n$-manifold, and let $\Omega $ be an open bounded subset of $M$. The first Dirichlet eigenvalue, or principal frequency, of $\Omega$ is defined by
\begin{equation}\label{eqn: lambda 1a}
\lambda_1(\Omega) = \inf \left\{ \frac{\int_\Omega |\na u|^2 }{\int_\Omega u^2} \ :\ u \in C^{\infty}_0(\Omega) \right\}.
\end{equation}
Here and in the sequel, $|\na u|^2 = \langle \nabla_g u, \nabla_g u\rangle_g$ and integration is with respect to the volume measure induced by the metric $g$.
 The infimum in \eqref{eqn: lambda 1a} is achieved, and a minimizer $u_\Omega$ satisfies the equation
\begin{equation}\label{eqn: eval eqn 1}
	\begin{cases}
-\Delta u_\Omega =\lambda_1(\Omega)\, u_\Omega & \text{ in } \Omega\\ 
 u_\Omega =0 & \text{ on } \pa \Omega,
 \end{cases}
\end{equation}
where $\Delta =\Delta_g$ is the Laplacian with respect to $g$. Such a function $u_\Om$ is called a first Dirichlet eigenfunction of $\Om$ 
and $\lambda_1(\Om)$ is the smallest number for which a nontrivial solution to this eigenvalue problem exists. If $\Om$ is connected, then the first Dirichlet eigenfunction is unique up to constant multiples.

In Euclidean space, balls have the smallest principal frequency among  subsets $\Omega \subset \R^n$ of a given volume. This fact, first established by Faber \cite{Faber23} and Krahn \cite{Krahn25} in the 1920s, readily follows from the Polya-Szeg\"{o} principle \cite{PSBook}, which
 relies only on the coarea formula and the isoperimetry of balls. On all simply connected space forms (Euclidean space, hyperbolic space and the round sphere), geodesic balls are isoperimetric sets for every volume. Consequently, on simply connected space forms, the Polya-Szeg\"{o} principle holds  and geodesic balls minimize  the principal frequency among sets of a given volume. 
The resulting inequality is known as the {\it Faber-Krahn inequality}: letting $\B $ denote a geodesic ball and $|\cdot |$ denote the volume measure induced by the metric on either Euclidean space, hyperbolic space, or the round sphere, we have 
\[
\lambda_1(\Omega) \geq \lambda_1(\B) \qquad \text{ whenever } |\Omega| = |\B|.
\]
It is known, moreover, that equality is achieved if and only if $\Omega$ is a geodesic ball, up to a set of capacity zero. On the round sphere, the Faber-Krahn inequality was first established by Sperner in \cite{s73} and is often called Sperner's inequality.
We refer the reader to \cite{AshBen07, Baernstein, ChavelBook} for a discussion of spectral inequalities and symmetrization techniques in these settings. \\

Our first main theorem is a sharp quantitative stability result for the Faber-Krahn inequality on simply connected space forms.  Broadly speaking, a quantitative stability result in this context states that 
\begin{equation}\label{eqn: stability general form}
\lambda_1(\Omega) -\lambda_1(\B) \geq c \text{ dist}(\Omega, \B)^\alpha
\end{equation}
for some $\alpha >0$, where $\B$ is a geodesic ball with $|\Om| = |\B|$ and $\text{dist}(\Omega, \B)$ is a suitable notion of distance between $\Omega$ and the nearest ball $\B$. 
In the statement below, we let $u_\Omega$ denote any first eigenfunction of $\Omega$, i.e. solution of \eqref{eqn: eval eqn 1}, that is normalized so that $u_\Omega \geq 0$ and $\int u_\Omega^2 = 1$, and extended by $0$ to be defined on the entire space. We let $A\Delta B= (A\setminus B) \cup (B\setminus A)$ denote the symmetric difference between sets.

\begin{theorem}[Sharp quantitative stability  for Faber-Krahn inequalities]\label{thm: quantitative FK general}\label{thm: quantitative sperner}
Fix $n\geq 2$ and let $(M^n,g)$ denote the round sphere, Euclidean space, or hyperbolic space. For any $v>0$ (with $v<|S^n|$ in the case of the round sphere), there exists a constant $c=c(n,v)$ such that the following holds. For any open bounded set $\Omega\subset M$ with $|\Omega|=v,$  we have 
\begin{equation}\label{eqn: stability statement}
	\lambda_1(\Omega) - \lambda_1(\B) \geq c \inf_{x \in M}\left( \left| \Omega \Delta \B(x)\right|^2 + \int_{M} \left| u_\Omega - u_{\B(x)}\right|^2 \right)\,.
\end{equation}
Here $\B(x)$ is a geodesic ball centered at $x$ with radius uniquely chosen so that $|\B(x)| =v$, and $\B$ denotes such a ball centered at any point.
\end{theorem}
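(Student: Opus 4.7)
I would prove this by a selection/contradiction argument: assuming the sharp inequality fails, I would replace the near-optimizers with minimizers of a penalized functional and then appeal to regularity results from the companion paper to reduce the question to a spectral calculation on the ball. The four main ingredients are: (i) a preliminary non-sharp stability estimate; (ii) a penalization that produces ``selected'' near-optimizers solving a critically perturbed Alt-Caffarelli type free boundary problem; (iii) the $C^{1,\alpha}$ free boundary regularity supplied by the companion paper; and (iv) a Fuglede-type spectral computation on a nearly-spherical normal graph over $\partial B$.

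\textbf{Preliminary stability and penalization.} First I would combine the Polya-Szeg\"o rearrangement with the quantitative isoperimetric inequality on simply connected space forms to obtain a crude bound
\[
\lambda_1(\Omega) - \lambda_1(B) \geq c\, D(\Omega)^{\gamma}
\]
for some $\gamma \geq 2$, where $D(\Omega)$ denotes the right-hand side in \eqref{eqn: stability statement}. This reduces the theorem to the regime $D(\Omega) \ll 1$. If the sharp inequality fails, pick a sequence $\{\Omega_k\}$ with $|\Omega_k|=v$, $D(\Omega_k) \to 0$, and $\lambda_1(\Omega_k) - \lambda_1(B) \leq k^{-1} D(\Omega_k)$, and replace each $\Omega_k$ by a minimizer $\tilde\Omega_k$ of a penalized functional of the form
\[
\mathcal{J}_k(\Omega) := \lambda_1(\Omega) + \Lambda \bigl| |\Omega| - v \bigr| - \e_k \sqrt{D(\Omega)}
\]
over sets in a small $L^1$-neighborhood of a ball. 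For a well-chosen $\e_k$, $\tilde\Omega_k$ still violates the sharp bound while satisfying an Euler-Lagrange system in which the $D$-term appears as a \emph{critical} perturbation of the classical Alt-Caffarelli one-phase problem.

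\textbf{Regularity and linearization.} The regularity theory of the companion paper for such critically perturbed Alt-Caffarelli minimizers yields that $\partial \tilde\Omega_k$ can be written as a $C^{1,\alpha}$ normal graph over $\partial B(x_k)$ with graph function $\varphi_k \to 0$ in $C^{1,\alpha}$. A Fuglede-type expansion then gives
\[
\lambda_1(\tilde\Omega_k) - \lambda_1(B) = \mathcal{Q}(\varphi_k) + o\bigl(\|\varphi_k\|_{H^{1/2}(\partial B)}^2\bigr),
\]
for a positive-semidefinite quadratic form $\mathcal{Q}$ whose kernel is spanned by the first spherical harmonic modes (corresponding to translation) together with constants (corresponding to volume). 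Optimizing the center $x_k$ to kill the translation modes and using $|\tilde\Omega_k|=v$ to kill the volume mode, a spectral gap for $\mathcal{Q}$ yields
\[
\lambda_1(\tilde\Omega_k) - \lambda_1(B) \gtrsim \|\varphi_k\|_{H^{1/2}(\partial B)}^2 \gtrsim D(\tilde\Omega_k),
\]
contradicting the choice of $\tilde\Omega_k$.

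\textbf{Main obstacles.} Two difficulties stand out. First, the presence of the eigenfunction term $\|u_\Omega - u_{B(x)}\|_{L^2}^2$ in $D$ forces a sharper $C^{1,\alpha}$ control of the free boundary than would be needed for the $|\Omega \Delta B|^2$ term alone; this is precisely where the critically perturbed Alt-Caffarelli regularity of the companion paper is essential, since the $\sqrt{D}$ penalty has the same scaling as the leading term in the Euler-Lagrange equation. Second, on the round sphere and hyperbolic space the eigenfunctions on geodesic balls admit no elementary closed form, so the spectral gap for $\mathcal{Q}$ cannot be extracted by direct Bessel computations as in the Euclidean setting of Brasco-De Philippis-Velichkov. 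Instead one must develop an implicit spectral analysis, for example by differentiating the principal eigenvalue with respect to radial perturbations and exploiting the monotonicity of $\lambda_1(B_R)$ in $R$ together with a Courant-Fischer variational characterization; this is where the bulk of the new work on non-flat space forms lies.
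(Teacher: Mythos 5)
Your skeleton (selection principle to reduce to smooth perturbations of a cap, then a Fuglede-type second-variation argument) is the same global strategy as the paper, but the step you wave at as ``implicit spectral analysis'' is precisely the heart of the matter on the sphere and hyperbolic space, and the mechanism you propose would not produce it. Differentiating $\lambda_1(B_R)$ in $R$ and using its monotonicity only sees the radial (volume) mode, which is already removed by the constraint $|\Omega|=v$; the real issue is a \emph{gap above the translational kernel}, i.e.\ showing that the quadratic form is strictly positive on all spherical-harmonic modes of degree $\geq 2$, and Courant--Fischer gives no comparison between the degree-one and degree-two eigenvalues of the linearized (Dirichlet-to-Neumann type) operator, whose radial profiles have no closed form. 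The paper resolves this by writing the linearized solutions as $Y_k(\theta)g_k(\phi)$ with $g_k$ solving a Legendre-type ODE, and proving $\eta_{n+1}>\eta_n=0$ through an ODE comparison/maximum-principle argument: the difference $h=g_n-g_{n+1}$ is a strict supersolution, a continuity (open/closed in the cap radius $\rho$) argument plus a Hopf-type boundary estimate give $h>0$ on $(0,\rho)$ and $h'(\rho)<0$, hence the spectral gap, uniformly for $\rho$ in compact subsets of $(0,\pi)$. Without an argument of this kind your proof has a genuine hole exactly where the new difficulty lies.

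Two further points. First, your preliminary reduction is unjustified: Polya--Szeg\"o plus the quantitative isoperimetric inequality controls only $|\Omega\Delta B(x)|$, not $\int|u_\Omega-u_{B(x)}|^2$; any power-law control of the eigenfunction distance by the deficit is part of what is being proved, so this step is circular (the paper needs no such crude bound -- the compactness is absorbed into the companion-paper selection principle). Second, the interface with the companion paper is not as you assume: because the $L^2$ eigenfunction penalization is a \emph{critical} perturbation of the Alt--Caffarelli functional, the regularity/selection result there is proved for the torsion-augmented energy $\lambda_1(\Omega)+\frak{T}\,\mathrm{tor}(\Omega)$, and to come back to the pure Faber--Krahn deficit the paper must invoke the Kohler--Jobin inequality on space forms to show the two deficits are comparable; your functional with the $-\varepsilon_k\sqrt{D(\Omega)}$ term has no such bridge. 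Smaller issues: the Fuglede-type expansion with error $o(\|\varphi\|_{H^{1/2}}^2)$ requires $C^{2,\alpha}$ (or $W^{2,p}$) smallness of the graph, not just $C^{1,\alpha}$; constants are not in the kernel of the second variation (it is only defined on mean-zero perturbations, by the Fredholm alternative); and the inequality $\|\varphi\|_{H^{1/2}}^2\gtrsim\int|u_\Omega-u_{B}|^2$ is itself a nontrivial proposition (proved in the paper via a diffeomorphism pullback and elliptic estimates), not something to assert in passing.
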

\begin{remark}[Optimality of the quadratic power]
	{\rm
	Considering ellipsoidal perturbations of a geodesic ball (in normal coordinates in the case of the sphere or hyperbolic space) shows that the quadratic power $\alpha =2$  is optimal for both of the terms on the right-hand side of \eqref{eqn: stability statement}, in the sense that the result is false if $2$ is replaced by any $\alpha <2.$
	}
\end{remark}

On Euclidean space, quantitative stability for the Faber-Krahn inequality was first considered by   Hansen and Nadirashvili \cite{HansenNadir94} and Melas \cite{Melas}. Both of these papers restrict their attention to convex sets or simply connected planar sets, and consider ``$L^\infty$'' (Hausdorff) type set distances $\text{dist}(\Omega,\B)$. Among general open bounded sets, simple examples show that even qualitative stability fails with respect to such a distance. For this reason, the {\it asymmetry}, $\text{dist}(\Om, \B) =\inf_{x \in \R^n} |\Omega \Delta \B(x)|$, is a more natural notion of set distance to consider, and quantitative stability with respect to the asymmetry was conjectured in \cite{BhatWeit} and \cite{Nadir97}. Non-sharp versions of this conjecture (i.e. with $\alpha >2$ in \eqref{eqn: stability general form}) were established in following years in  \cite{Bhat01} and \cite{FMP09}. All of these results are based on applying quantitative forms of the isoperimetric inequality to level sets of eigenfunctions.

The first sharp quantitative Faber-Krahn inequality on Euclidean space was established by  Brasco, De Philippis, and Velichkov \cite{BDV15}, stating 
\begin{equation}\label{eqn: BDV}
\lambda_1(\Omega) - \lambda_1(\B) \geq c \inf_{x \in \R^n} |\Omega \Delta \B(x)|^2
\end{equation}
for all open bounded sets $\Omega$, where $|\B(x)| = |\Om|$.  Their proof was based on a selection principle approach  introduced by Cicalese-Leonardi \cite{CiLe12} (see also \cite{AFM13}) using the  regularity theory for Alt-Caffarelli type functionals. The selection principle is also the basis of the proof of Theorem~\ref{thm: quantitative FK general}, and we discuss these proofs further in Section~\ref{ss: proof talk} below. Following the work of \cite{BDV15}, similar methods were used to establish sharp quantitative stability for the $p$-Laplacian \cite{FuscoZhang} as well as for the capacity  \cite{DPMMCapacity} and $p$-capacity \cite{Mukoseeva+2021} inequalities.
We refer the reader to \cite{BrascoDePSurvey} for an overview on spectral inequalities in quantitative form.

A shortfall of the asymmetry as a measure of the distance between sets is that it only detects modifications up to sets of measure zero, while the eigenvalue sees modifications of a set up to sets of zero capacity. For instance, if we let $\Omega = B(0,1) \setminus\{x_1 =0\}$ be a slit domain in $\R^2$, then the left-hand side of \eqref{eqn: BDV} is strictly positive while the right-hand side is zero. In contrast, the right-hand side of \eqref{eqn: stability statement} is strictly positive in this example (and more generally, whenever the left-hand side is), and in this way, Theorem~\ref{thm: quantitative FK general} nontrivially strengthens the result of \cite{BDV15} even in Euclidean space. This issue is discussed in the survey paper \cite{BrascoDePSurvey}, and in particular Theorem~\ref{thm: quantitative FK general} resolves \cite[Open Problem 3]{BrascoDePSurvey}.

On the sphere and hyperbolic space, significantly less progress has been made toward quantitative Faber-Krahn inequalities. These spaces lack the scaling invariance of Euclidean space, and certain computations that can be carried out explicitly on Euclidean space cannot be done in these spaces. In fact, the explicit forms of the first eigenvalue and eigenfunction of geodesic balls are not known in closed form!  \'{A}vila established analogues of Melas' aforementioned results for convex subsets of $S^2$ and $H^2$ in \cite{Avila02}, following quantitative estimates for other spectral inequalities in \cite{Xu95}.
 Qualitative stability results for the Faber-Krahn inequality and other spectral inequalities on space forms is established in \cite{ABC09}. See also \cite{sphere, hyperbolic} for quantitative isoperimetric inequalities on space forms.
To our knowledge, Theorem~\ref{thm: quantitative FK general} is the first (sharp or nonsharp) quantitative stability result for Faber-Krahn inequalities on the round sphere and hyperbolic space that is valid for all open bounded sets $\Om$.\\

\medskip

It has been understood in recent years that  eigenvalue optimization problems are closely linked to the study of free boundary problems. When optimizing functionals involving (linear combinations of) higher eigenvalues, or of the first eigenvalue with respect to additional constraints, one can no longer expect to explicitly characterize minimizers.
 Instead, one hopes to show that a minimizer exists in a weak class of sets, and then show that the boundary of a minimizer is regular at most points. The paper \cite{BL09} considered the minimization of $\lambda_1(\Omega)$ among subsets $\Omega$ of some fixed bounded open container. Here the authors established regularity properties of minimizing sets by using  the regularity theory of Alt and Caffarelli \cite{AC81} for the one-phase free boundary problem. More recently, minimization problems for functionals that are  linear combinations of Dirichlet eigenvalues have been recast as vectorial free boundary problems in \cite{KL1, KL2, MTV17, CSY}, and regularity of minimizers has been studied through this lens of free boundary regularity.

In these examples, regularity theory for free boundary problems is used as a tool in the study of eigenvalue optimization problems. In the next main result of this paper, the stream of tools will go in the opposite direction: we apply the  quantitative Faber-Krahn inequality of Theorem~\ref{thm: quantitative FK general} on the sphere in order to establish new quantitative estimates for the  Alt-Caffarelli-Friedman monotonicity formula, a powerful tool in the study of various types of free boundary problems.

Fix $n\geq 2 $ and let $B_r = B(0,r)\subset \R^n$ denote the ball of radius $r$ centered at the origin in Euclidean space. Let $u_1,u_2\in H^1(B_2)$ be two nonnegative continuous functions satisfying
\begin{equation}\label{eqn: harmonic functions} 
\begin{cases}
	\Delta u_i \geq 0&   \ \ \text{ in } \ \ \{u_i>0\}=:\Om_i\\
	u_1\cdot u_2 = 0 & \  \ \text{ in } B_2.
\end{cases}
\end{equation}
The Alt-Caffarelli-Friedman monotonicity formula (hereafter denoted ACF formula) $J:(0,1) \to \R$ is defined by 
\begin{equation}\label{eqn: ACF 2d}
J(r) = J[u_1, u_2](r) = \frac{1}{r^4} \int_{ B_r} |\na u_1|^2 \int_{ B_r} |\na u_2|^2\, 
\end{equation}
when $n= 2$ and
\begin{equation}\label{eqn: ACF higher d}
J(r) =  J[u_1, u_2](r) =\frac{1}{r^4} \int_{ B_r} \frac{|\na u_1|^2}{|x|^{n-2}} \int_{ B_r} \frac{|\na u_2|^2}{|x|^{n-2}}
\end{equation}
for $n\geq 3$. Heuristically thinking, one can view $J(r)$ as the product of the average Dirichlet energies of $u_1, u_2$ on $B_r$. This quantity has the important property of being nondecreasing for $r \in (0,1)$. Note that this in particular guarantees the existence of the limit 
\[
J(0^+) := \lim_{r\to 0} J(r).
\]
Furthermore, if $J(0^+)$ is strictly positive, then $J(r)$ is constant with respect to $r$ if and only if
\[
u_1 = \beta_1 (x\cdot \nu)^+ ,\qquad \qquad u_2 = \beta_2(x \cdot \nu)^-,
\]
for some $\nu \in S^{n-1}$ and $\beta_1, \beta_2> 0$. Here $f^+$ and $f^-$ respectively denote the positive and negative parts of a function $f$. In other words, the ACF monotonicity formula is constant and positive if and only if $u_1, u_2$ are linear functions defined on complementary half spaces $\Omega_1$ and $\Omega_2. $

Since its introduction in \cite{acf84}, the ACF monotonicity formula has proven useful in both free boundary problems and harmonic analysis. One application of the ACF monotonicity formula is the following. Suppose that a pair of functions $u_1, u_2$ satisfying \eqref{eqn: harmonic functions} have $J(0^+)>0$, and consider any sequence $r_{k} \to 0$. There exists a subsequence (which we do not relabel), constants $\beta_1, \beta_2>0$ and a direction $\nu \in S^{n-1}$ such that
\[
 \frac{u_1(r_{k} x)}{r_{k}} \to \beta_1(x \cdot \nu)^+ , \qquad \qquad \frac{u_2(r_{k} x)}{r_{k}} \to \beta_2(x \cdot \nu)^-.
 \]
 
 However, it is important to note that these blowup limits depend on the sequence $r_{k}$ and  {\it does not} imply that $u_1$ and $u_2$ or their interfaces $\Omega_1, \Omega_2$ have unique tangents at the origin. In fact, this need not be the case!  In \cite{ak20} the first two authors gave an example of two harmonic functions defined on complementary subsets of $B_2$, which have a slowly spiraling interface,  where different sequences $r_k \to 0$ result in different 
$\nu$ in the limit.

Toward understanding conditions that {\it do} guarantee the existence of unique tangents for functions $u_1$ and $u_2$ as in \eqref{eqn: harmonic functions}, we establish the following quantitative estimates for the Alt-Caffarelli-Friedman monotonicity formula. Roughly speaking, this theorem says that if $J(1) -J(0^+)$ is small, then $u_1$ and $u_2$ are quantitatively close to linear functions $\beta_i (x\cdot\nu)^{\pm}$.  Note that if $J(0^+)$ is strictly positive, then up to rescaling we may assume that $J(0^+)=1$.
 \begin{theorem}[Sharp quantitative stability for the ACF monotonicity formula]   \label{t:stability}
Fix $n\geq 2$ and let $\rho \in [0,1/2]$. There exists a constant $C=C(n)>0$ such that the following holds. Suppose that $u_1,u_2$ satisfy \eqref{eqn: harmonic functions}.  Then there exist constants $\beta_1, \beta_2>0$ and $\nu \in S^{n-1}$ such that 
 \[
  \int_{B_1 \setminus B_{\r}} \left(u_1 -\beta_1 (x\cdot \nu)^+ \right)^2 + \left(u_2 -\beta_2 (x\cdot \nu)^-\right)^2
  \leq C\log\left(\frac{J(1)}{J(\r)}\right) \sum_{i=1}^2 \| u_i \|_{L^{2}(B_1)}^2.  
 \] 
 Furthermore, there exists a constant $\epsilon_0=\epsilon_0(n)>0$ such that if  the quotient
 $\log(J(1)/J(0^+))< \epsilon_0$, then $\beta_i$ and $\nu$ may be chosen independently of $\rho \in [0,1/2]$. 
\end{theorem}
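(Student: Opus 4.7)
The strategy is to use the quantitative Sperner inequality (Theorem~\ref{thm: quantitative sperner}) on $S^{n-1}$ to convert the ACF energy drop, at each radius $r$, into $L^2$ closeness of the trace $u_i|_{\partial B_r}$ to a linear profile, and then integrate in $r$. The starting point is the classical ACF computation, which expresses the derivative of $\log J$ in terms of the spectra of the spherical positivity sets $\Sigma_i(r) = \{u_i>0\}\cap \partial B_r$ (rescaled to $S^{n-1}$): with $\gamma(\lambda) = -\tfrac{n-2}{2}+\sqrt{(\tfrac{n-2}{2})^2+\lambda}$ the characteristic exponent,
\[
\frac{d}{dr}\log J(r) \;\geq\; \frac{2}{r}\bigl[\gamma(\lambda_1(\Sigma_1(r))) + \gamma(\lambda_1(\Sigma_2(r))) - 2\bigr].
\]
Friedland--Hayman convexity of $v \mapsto \gamma(\lambda_1(\text{spherical cap of volume }v))$, together with the disjointness constraint $|\Sigma_1|+|\Sigma_2|\leq |S^{n-1}|$, renders the right-hand side nonnegative, with equality iff $\Sigma_1,\Sigma_2$ are complementary hemispheres and each $u_i|_{\partial B_r}$ coincides with the hemispheric first eigenfunction $c_n(x\cdot \nu)^\pm$.

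The first main step is to upgrade this to a quantitative statement. In the balanced regime where $|\Sigma_i|$ is close to $|S^{n-1}|/2$, Theorem~\ref{thm: quantitative sperner} applied to each $\Sigma_i(r)$ yields sharp quadratic control of $\lambda_1(\Sigma_i) - (n-1)$ by the $L^2$-distance from the normalized eigenfunction of $\Sigma_i$ to that of the hemisphere; in the unbalanced regime the deficit $\gamma(\lambda_1(\Sigma_i))-1$ is controlled below directly by the volume gap. Combining these two regimes yields
\[
r\frac{d}{dr}\log J(r) \;\gtrsim\; \sum_{i=1}^2 \frac{1}{\|u_i\|_{L^2(\partial B_r)}^2}\inf_{\beta_i,\nu_i}\int_{\partial B_r}\bigl(u_i - \beta_i(x\cdot \nu_i)^+\bigr)^2\,dS,
\]
and since $\supp u_1\cap\supp u_2=\emptyset$, the optimal hemispheres for $u_1$ and $u_2$ must be nearly antipodal, so one may take a common axis $\nu(r)$ with $\nu_1(r)=\nu(r)$, $\nu_2(r)=-\nu(r)$, matching the $(x\cdot\nu)^\pm$ convention of the theorem.

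Integrating this differential inequality in $r \in [\rho,1]$, applying the layer-cake identity $\int_\rho^1 \int_{\partial B_r}(\cdot)\,dS\,dr = \int_{B_1\sm B_\rho}(\cdot)\,dx$, and bounding $\|u_i\|_{L^2(\partial B_r)}^2$ via monotonicity of $r^{-(n-1)}\int_{\partial B_r}u_i^2\,dS$ (which follows from subharmonicity of $u_i^2$) yields an estimate of the desired form but with scale-dependent parameters $(\beta_i(r),\nu(r))$. To exchange these for a single triple $(\beta_1,\beta_2,\nu)$, one observes that $u_i - \beta_i^r(x\cdot \nu^r)^\pm$ is subharmonic, so a mean-value/Moser propagation converts boundary $L^2$-closeness at scale $r$ into boundary closeness at scale $r/2$, showing that the optimal parameters at adjacent dyadic scales differ by an amount controlled by the local energy drop. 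For the first part of the theorem ($\rho$ fixed), a telescoping triangle inequality across dyadic annuli absorbs this drift into a larger absolute constant.

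For the second assertion, when the total drop $\log(J(1)/J(0^+))<\epsilon_0$, the cumulative drift across all dyadic scales from $1$ down to $0$ remains uniformly small, rendering the sequence of optimal parameters Cauchy as $\rho\to 0$; passing to the limit produces a single $(\beta_1,\beta_2,\nu)$ valid for all $\rho\in[0,1/2]$ and yielding existence of a tangent. The principal obstacle is precisely this drift analysis: one must quantify the change in the best linear profile between adjacent dyadic scales by the local energy drop with an exponent sharp enough for both the telescoping and the Cauchy arguments to close, which is where the sharp quadratic exponent in Theorem~\ref{thm: quantitative sperner} (rather than any weaker power) is essential.
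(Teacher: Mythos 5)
There is a genuine gap, and it occurs at both of the two load-bearing steps of your outline. First, your scale-wise differential inequality does not follow from the inputs you cite. The Friedland--Hayman term (your first display, which is the paper's $\delta_C(r)$) together with Theorem~\ref{thm: quantitative sperner} only controls spectral data of the positivity sets $\{u_i>0\}\cap\partial B_r$: it tells you that the \emph{first eigenfunction} of that set is $L^2$-close to the hemisphere eigenfunction. It says nothing about whether the trace $u_i|_{\partial B_r}$ is close to the first eigenfunction of its own positivity set. That link requires the additional deficit term $\delta_A(r)$ in the decomposition $\log(J(r))'\geq \delta_A+\delta_B+\delta_C$ (see Lemma~\ref{l:33estimate}), a uniform spectral gap $\lambda_2-\lambda_1\geq c$ for the positivity set (Lemma~\ref{l:eigengap}, which itself needs $\delta_C(r)$ small), and care with the normalization (the $\delta_A$ bound is normalized by $\int_{B_r}|\nabla u_i|^2$, not by $\|u_i\|_{L^2(\partial B_r)}^2$). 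Since your first display has already discarded $\delta_A$ and $\delta_B$, the second display cannot be derived from it. Relatedly, the paper's proof never runs the eigenfunction analysis at every radius: it uses the $\delta_B$ term (almost-homogeneity), integrated in $r$, to show $u_i$ is $L^2$-close on all of $B_1\setminus B_\rho$ to the one-homogeneous extension of its trace at a single Chebyshev-chosen scale $r\approx 1$ (Lemma~\ref{l:onehomogen} and its $n$-dimensional analogue), and only at that one scale invokes $\delta_A$, the spectral gap, quantitative Sperner, and the cap comparison (Lemma~\ref{lem: caps}, Proposition~\ref{l:approx}). The almost-homogeneity ingredient is entirely absent from your argument, and it is what removes any need to compare profiles across scales.

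Second, the multi-scale patching you propose in its place does not close. The function $u_i-\beta(x\cdot\nu)^{\pm}$ is a difference of two subharmonic functions and is not subharmonic, so the mean-value/Moser propagation you invoke is unjustified. More seriously, with the sharp quadratic exponent the change of the best profile across one dyadic scale is controlled only by the \emph{square root} of the local drop $\epsilon_k$, and $\sum_k\sqrt{\epsilon_k}$ is not controlled by $\sum_k\epsilon_k=\log(J(1)/J(\rho))$ when the number of scales $\sim\log(1/\rho)$ is large (e.g.\ $\epsilon_k=\epsilon/K$ gives total drift $\sqrt{\epsilon K}$). Since Theorem~\ref{t:stability} must hold with a constant uniform over $\rho\in[0,1/2]$, including $\rho=0$, the drift cannot simply be ``absorbed into a larger absolute constant.'' This is precisely why the paper needs no patching for Theorem~\ref{t:stability}, and why the dyadic/Cauchy argument appears only in Corollary~\ref{c:uniqueblowup}, where the Dini condition $\int_0^1 r^{-1}\sqrt{\omega(r)}\,dr<\infty$ is assumed exactly so that the per-scale drifts are summable. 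Your proposal in effect assumes the mechanism of the corollary in order to prove the theorem; without the one-homogeneity reduction the argument does not give the stated uniform estimate.
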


This theorem is sharp, in the sense that the dependence on $\log \frac{J(1)}{J(\r)}$ on the right cannot be improved; this can be seen from the examples constructed in \cite[Section 2]{ak20}. 

A direct consequence of Theorem~\ref{t:stability} is the following corollary, which gives a criterion for naturally scaled blow-ups of the functions $u_i$ at $0$ to be unique. We believe this criterion is essentially sharp, for the same reasons that Theorem \ref{t:stability} is sharp. This resolves a question raised in \cite{ak20}.
\begin{corollary}[Uniqueness of blowups]\label{c:uniqueblowup}
	Fix $n\geq 2$. Suppose that $u_1,u_2$ satisfy \eqref{eqn: harmonic functions}, assume $J(0^+) = 1$, and let $\w(r) = J(r) - 1$. Assume in addition that
	\[
	\int_0^1 \frac{\sqrt{\w(r)}}{r} dr < \infty.
	\]
	Then there exists a number $\b_1 > 0$ and a vector $\nu \in S^{n-1}$ such that
	\[
	\lim_{r \searrow 0} \frac{1}{r^{n + 2}} \int_{B_{r}} |u_1 - \b_1 (\nu \cdot x)_+|^2 = 0.
	\]
\end{corollary}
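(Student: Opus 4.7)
My plan is to apply Theorem~\ref{t:stability} at dyadic scales. Set $r_k = 2^{-k}$ and define the rescaled pair $v_i^{(k)}(x) = u_i(r_k x)/r_k$; a direct change of variables shows that the ACF functional is scale-invariant, $J[v^{(k)}](t) = J(r_k t)$, so that $(v_1^{(k)}, v_2^{(k)})$ satisfies \eqref{eqn: harmonic functions} on $B_2$ and $J[v^{(k)}](0^+) = 1$. Standard ACF theory yields the linear growth bound $u_i(x) \leq C|x|$, so $\|v_i^{(k)}\|_{L^2(B_1)}$ is bounded uniformly in $k$. Since $\log(J[v^{(k)}](1)/J[v^{(k)}](0^+)) = \log J(r_k) \leq \omega(r_k) \to 0$, the ``furthermore'' clause of Theorem~\ref{t:stability} applies for all $k \geq k_0$, producing $(\beta_1^{(k)}, \beta_2^{(k)}, \nu^{(k)})$ such that for every $\rho \in [0, 1/2]$,
\[
\int_{B_1 \setminus B_\rho} \Bigl| v_1^{(k)} - \beta_1^{(k)} (x \cdot \nu^{(k)})^+ \Bigr|^2 + \Bigl| v_2^{(k)} - \beta_2^{(k)} (x \cdot \nu^{(k)})^- \Bigr|^2 \leq C \log\frac{J(r_k)}{J(\rho r_k)}.
\]

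The heart of the proof is a Cauchy estimate for $(\beta_i^{(k)}, \nu^{(k)})$. Applying the above at scale $k$ with $\rho = 1/4$ and at scale $k+1$ with $\rho = 1/2$, the one-homogeneous profiles $L_i^{(k)}(x) := \beta_i^{(k)} (x \cdot \nu^{(k)})^{\pm}$ both approximate $u_i$ in $L^2$ on the overlapping annulus $B_{r_{k+1}} \setminus B_{r_{k+2}}$. Unscaling and applying the triangle inequality yields
\[
\int_{B_{r_{k+1}} \setminus B_{r_{k+2}}} |L_i^{(k)} - L_i^{(k+1)}|^2 \leq C r_k^{n+2} \log \frac{J(r_k)}{J(r_{k+2})} \leq C r_k^{n+2}\, \omega(r_k).
\]
An elementary angular computation provides the lower bound $\int_{B_1 \setminus B_{1/2}} |\beta_a (x \cdot \nu_a)^+ - \beta_b (x \cdot \nu_b)^+|^2 \geq c(|\beta_a - \beta_b|^2 + \beta_a \beta_b |\nu_a - \nu_b|^2)$, and the $(n+2)$-homogeneity of the integrated $L^2$ norm under dilation then gives
\[
|\beta_i^{(k)} - \beta_i^{(k+1)}|^2 + \beta_1^{(k)}\beta_2^{(k)}\,|\nu^{(k)} - \nu^{(k+1)}|^2 \leq C\, \omega(r_k).
\]
Since $\omega$ is monotone nondecreasing, the hypothesis gives $\sum_k \sqrt{\omega(r_k)} \leq 2 \int_0^1 r^{-1}\sqrt{\omega(r)}\, dr < \infty$, so $\{\beta_i^{(k)}\}$ is Cauchy. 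A compactness argument for blowup subsequential limits of $v^{(k)}$, combined with the rigidity case of the ACF formula at $J \equiv 1$, forces $\beta_1^{(k)}\beta_2^{(k)} \to 1/\sqrt{c_n}$ for an explicit geometric constant $c_n$, so $\beta_i^{(k)}$ stays bounded below; hence $\{\nu^{(k)}\}$ is also Cauchy. Denote the limits by $(\beta_i^*, \nu^*)$.

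To conclude, apply the ``furthermore'' estimate at scale $k$ with $\rho = 0$:
\[
r_k^{-(n+2)} \int_{B_{r_k}} |u_1 - L_1^{(k)}|^2 \leq C \log J(r_k) \leq C\, \omega(r_k) \to 0,
\]
while $r_k^{-(n+2)} \int_{B_{r_k}} |L_1^{(k)} - \beta_1^* (x \cdot \nu^*)^+|^2 \leq C(|\beta_1^{(k)} - \beta_1^*|^2 + |\nu^{(k)} - \nu^*|^2) \to 0$ by Cauchy convergence, so the desired limit holds along dyadic radii; for $r \in [r_{k+1}, r_k]$ one loses only a factor $2^{n+2}$ via $B_r \subset B_{r_k}$. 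The main technical obstacle I anticipate is simultaneously establishing (i) the linear growth bound on $u_i$ needed to control $\|v_i^{(k)}\|_{L^2(B_1)}$ uniformly, and (ii) the uniform lower bound $\beta_i^{(k)} \geq c_0 > 0$ required to turn $L^2$ closeness of the profiles into closeness of the directions $\nu^{(k)}$; both should follow from standard ACF non-degeneracy arguments using $J(0^+) = 1 > 0$.
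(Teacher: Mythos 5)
Your overall architecture is exactly the paper's: rescale dyadically, apply Theorem~\ref{t:stability} at each scale, compare consecutive linear profiles on overlapping annuli to get a Cauchy estimate for $(\beta_i^{(k)},\nu^{(k)})$, sum using $\sum_k\sqrt{\omega(2^{-k})}\le C\int_0^1 r^{-1}\sqrt{\omega(r)}\,dr<\infty$, and pass back to all radii. The gap is in the step you defer to ``standard ACF theory'': the scale-uniform bound $\|v_i^{(k)}\|_{L^2(B_1)}\le C$ (equivalently, a linear growth bound $u_i(x)\le C|x|$ with $C$ independent of scale, i.e.\ an upper bound on the slopes $\beta_i^{(k)}$). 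No such bound follows from $J(0^+)=1$ alone. The ACF quantity only controls the \emph{product} of the two averaged energies; each individual factor $\phi_i(r)=r^{-2}\int_{B_r}|\nabla u_i|^2|x|^{2-n}$ is not monotone and can drift (one factor slowly growing, the other shrinking, with the product nearly constant), and ruling out this drift is precisely where the Dini hypothesis must enter --- it is not a soft nondegeneracy fact. Your proposed fix (ii) cannot rescue (i), because the compactness/rigidity argument you invoke to pin down $\beta_1^{(k)}\beta_2^{(k)}$ already presupposes the uniform $L^2$ bound of (i); as written the two claims are circular.

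The paper closes exactly this hole by proving the bound \emph{by induction together with} the Cauchy estimate: Caccioppoli reduces $\|u^k\|_{W^{1,2}(B_1)}$ to $\|u^k\|_{L^2(B_2)}=2^{n/2+1}\|u^{k-1}\|_{L^2(B_1)}$, the inductive closeness estimate at step $k-1$ reduces this to $C(\sqrt{C_1\,\omega(2^{-k-1})}+\beta^{k-1})$, and the already-established increments give $\beta^{k-1}\le\beta^0+\sum_j|\beta^j-\beta^{j-1}|\le C\bigl(1+\int_0^1 r^{-1}\sqrt{\omega(r)}\,dr\bigr)$, after which $C_1$ is chosen large and $\log(J(1)/J(0^+))$ small to close the induction. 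So to repair your argument you should either reproduce this bootstrap, or prove directly (using \eqref{eqn: deltaC esta} and $|\alpha_i(r)-1|\le C\sqrt{r\,\delta_C(r)}$, integrated against $dr/r$ and summed dyadically via Cauchy--Schwarz, which again uses the Dini condition) that each factor $\phi_i$ is bounded as $r\to0$; simply citing a standard linear growth estimate is not available at this level of generality. With that uniform bound in hand, your annulus comparison, the lower bound on $\beta_i^{(k)}$ via rigidity of $J\equiv1$, and the final telescoping all go through and coincide with the paper's proof.
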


\subsection{Proofs of the main theorems}\label{ss: proof talk} 
Let us now discuss some aspects of the proofs of our main theorems. We begin by discussing Theorem~\ref{t:stability}, and particularly its connection to Theorem~\ref{thm: quantitative FK general} in the case of the round sphere.
The proof of Theorem \ref{t:stability} is accomplished by quantifying the following estimates when $J(1)-J(\rho)$ is small: 
 \begin{enumerate}
  \item For a large set of $r \in [\rho,1]$ and for $i =1,2$, the restriction of $u_i$ to $\pa B_r$  is close in an $L^2$ sense to the first Dirichlet eigenfunction of the spherical Laplacian on the set $\{u_i>0\}\cap \partial B_r$ . \\
  \item For $i=1,2,$ the function $u_i$ is close in an $L^2$ sense to a one-homogeneous function in $B_1\setminus B_{\rho}$. \\
  \item For a large set of $r \in [\rho,1]$ and for $i=1,2$, the first Dirichlet eigenvalue of the spherical Laplacian on the set $\{u_i>0\}\cap \partial B_r$ is close to the first eigenvalue of a  hemisphere in $\pa B_r$. 
 \end{enumerate} 
 
The one-homogeneous extension of the first eigenfunction of a hemisphere in $\pa B_1$ is a truncated linear function $(x \cdot \nu)^+$ for some $\nu \in S^{n-1}$.  Theorem~\ref{t:stability} is established by combining the estimates above  to show that for $i=1,2$, the function $u_i$ is close to a constant multiple $\beta_i>0$ of this linear function, for suitably chosen $\nu$. 
 
 In the case when $n=2$,  the proof of Theorem~\ref{t:stability} is significantly simpler than in higher dimensions, fundamentally because the geometry of $S^1(r) = \pa B_r$ is sufficiently simple that much of the analysis can be done in an explicit manner. In dimension $2$, the estimate (3) for the the first eigenvalue of $\{u_i>0\}\cap \partial B_r$ immediately determines the length of the longest connected curve of $\{u_i>0\}\cap \partial B_r$ as well  as the explicit first eigenfunction on $\{u_i>0\}\cap \partial B_r$.

  When $n>2$, on the other hand, the estimate (3) contains less immediate information. Knowing the first eigenvalue on $\{u_i>0\}\cap \partial B_r$ does not 
 determine the shape of $\{u_i>0\}\cap B_r$ nor the eigenfunction on $\{u_i>0\}\cap B_r$. It is here that we make crucial use of the sharp quantitive version of Sperner's inequality established in Theorem~\ref{thm: quantitative FK general}. Theorem~\ref{thm: quantitative FK general} applied to $S^{n-1} =\pa B_1$,  in conjunction with the estimate (3) allows us to deduce that the restriction of $u_i$ to $\pa B_1$ is quantitatively close to the first eigenfunction of a spherical cap of the same volume. We emphasize that for this application, it is essential that we have established the improved form of stability that includes eigenfunction control in Theorem~\ref{thm: quantitative FK general}.

\medskip

Let us now make some further remarks about Theorem~\ref{thm: quantitative FK general} and its proof. We begin with some comments about the statement and optimality of the theorem.

\begin{remark}[Dependence of constants on the volume]
	{\rm In Theorem~\ref{thm: quantitative FK general}, the constant $c$ in the quantitative estimates depend on the volume.  On Euclidean space, the scaling invariance can be used to state the Faber-Krahn inequality without a volume constraint as 
	$
\lambda_1(\Omega) |\Omega|^{2/n} \geq  \lambda_1(B) |B|^{2/n}
$ where $B=B_1$,
and similarly \eqref{eqn: stability statement} can be restated in scale-invariant form as 
\[
\lambda_1(\Omega) |\Omega|^{2/n} - \lambda_1(B) |B|^{2/n} \geq c \left( \inf_{x \in \R^{n}}\frac{| \Omega \Delta B(x,r)|^2}{|\Om|^2} + \inf_{x \in \R^n} \int_{\R^n} | u_\Omega - u_{B(x,r)}|^2 \right)\,
\]
where $c$ is a constant depending on dimension only and $r$ is chosen so that $|\Om| = |B_r|$.
On the sphere and hyperbolic space, the constant can clearly be taken to be uniform for any compact subset of volumes. We make no attempt to track the dependence of the constant $c=c(n,v)$ in \eqref{eqn: stability statement} as $v\to 0$ or as  $v \to |S^n|$ (in the case of the sphere) or $v\to \infty$ (in the case of hyperbolic space), though as $v\to 0$ one expects that the constants can be made uniform after the suitable rescaling. 
	}
\end{remark}

\begin{remark}[Optimality of the $L^2$ norm]
	{\rm It is natural to wonder if the $L^2$ distance between eigenfunctions  in \eqref{eqn: stability statement} can be strengthened to the $H^1$ distance. This is false, at least with the quadratic power. Indeed, let $\B$ be a ball of maximal overlap with $\Om$ and suppose by way of contradiction that there exists $c>0$ such that $\lambda_1(\Omega)-\lambda_1(\B) \geq c \int |\na u_\Omega - \na u_\B|^2 $. On $\B\setminus \Omega$, we have $\na  u_\Omega = 0$ and $|\na u_\B |\approx 1.$  So, 
	\[
	\int |\na u_\Omega - \na u_\B|^2 \geq \int_{\B\setminus \Omega} |\na u_\B|^2 \geq c\, |\B\setminus \Omega | = \frac{c}{2}\, |\B\Delta \Omega|.
	\]
This would imply that the eigenvalue deficit $\lambda_1(\Omega) -\lambda_1(\B)$ controls the asymmetry {\it linearly}, which as we have already discussed is false.	
It is not hard to show that {\it qualitative} stability holds for the $H^1$ norm and we do not know if quantitative stability may hold with a different modulus of continuity, e.g. for some $\alpha>2$ in \eqref{eqn: stability general form}.
	}
\end{remark}

\begin{remark}[Quotient spaces]
	{\rm 
	Naturally, the Polya-Szeg\"{o} principle and thus the Faber-Krahn inequality hold on quotients of Euclidean space, hyperbolic space, and the round sphere when  the volume of $\Omega$ is small. Theorem~\ref{thm: quantitative FK general} also generalizes to quotient spaces for the volume $v$ beneath a suitable threshold. 
	}
\end{remark}

%
%
%
%

The proof of  Theorem~\ref{thm: quantitative sperner}, like the proof of the quantitative Faber-Krahn inequality of \cite{BDV15}, is based on a {\it selection principle} argument. This global-to-local reduction tool, introduced by Cicalese and Leonardi in \cite{CiLe12} in the context of the isoperimetric inequality, is perhaps the most robust method to prove quantitative geometric inequalities and allows one to trade the task of proving stability in the entire class of objects for the task of proving stability among minimizers of some penalized functional, which typically enjoy strong regularity properties. 

Let us sketch the basic scheme of the selection principle argument. Suppose by way of contradiction that the desired stability result \eqref{eqn: stability statement} fails, and so there exist a sequence of open bounded sets $\Omega_j$ of a fixed volume $v$ with $\lambda_1(\Omega_j) \to \lambda_1(\B)$ and yet 
\begin{equation}\label{eqn: contra general}
\lambda_1(\Omega_j) - \lambda_1(\B)\leq \frac{\text{dist}(\Omega_j, \B)^2}{j},
\end{equation}
where we let $\text{dist}(\Omega,\B)$ denote the distance on the right-hand side of \eqref{eqn: stability statement}.
 Now, consider a functional of the form 
 \begin{equation}\label{eqn: functional}
 \mathcal{F}_j(\Om) = \lambda_1(\Omega) +\tau \sqrt{c_j^2 + (c_j - \text{dist}(\Omega, B)^2)^2},
 \end{equation}
  where $c_j =\text{dist}(\Omega_j, B)^2$ which is designed so that a minimizer $\Omega_j'$ of $\mathcal{F}_j$ among sets of a fixed volume  satisfies the same contradiction assumption \eqref{eqn: contra general}. 
 Theorem~\ref{thm: quantitative FK general} will then follow once we establish the following two main ingredients for a selection principle: 
\begin{enumerate}
	\item (Regularity) Minimizers $\Omega_j'$ of the penalized functionals $\mathcal{F}_j$ satisfy good $\e$-regularity properties. Combined with the (easily shown) fact that the $\Omega_j'$ converge to a ball $\B$ in a weak sense, this will guarantee that for $j$ sufficiently large, the boundary of $\Omega_j'$ is a small $C^{2,\alpha}$ perturbation of the boundary of $\B$.\\
	  
	\item (Local stability) The quantitative stability estimate \eqref{eqn: stability statement} holds for sets whose boundaries are small $C^{2,\alpha}$ perturbations of the boundary of $\B$. This step is based on {\it linear stability,} i.e. strict positivity of the second variation of $\lambda_1(\B)$ in non-translational directions.
\end{enumerate}

In the context of Theorem~\ref{thm: quantitative sperner}, significant challenges arise in both of these two steps. Let us describe these challenges, and
  the new strategies introduced to deal with them.

{\it Step (1):}  For an energy like \eqref{eqn: functional} in which $\lambda_1(\Om)$ (or the closely related torsional rigidity below) is the dominant term, the functional $\mathcal{F}_j$ can be realized  
as  a perturbation of the Alt-Caffarelli functional \cite{AC81}. 
However, in contrast to \cite{BDV15}, the presence of $L^2$ eigenvalue penalization in the distance makes this perturbation critical
and leads to significant issues for regularity. Step (1) is carried out in our companion paper \cite{AKN1}, where we consider a selection principle associated to the geometric inequality
\[
\lambda_1(\Omega) + \frak{T} \tor(\Omega) \geq \lambda_1(B) + \frak{T} \tor (B)
\]
for a small number $\frak{T}>0$. Here $\tor(\Omega)$ is the (negative of the physical quantity) {\it torsional rigidity} 
of an open set $\Omega \subset \R^n$, defined and discussed further in Section~\ref{sec: SP}.
 In Section~\ref{sec: SP}, we will see how to connect this selection principle with the one we need to prove Theorem~\ref{thm: quantitative FK general}, using the so-called Kohler-Jobin inequality.

{\it Step (2):} In nearly all  selection principle arguments, including  the quantitative Faber-Krahn inequality of \cite{BDV15} in Euclidean space, Step (2) can be carried out using an {\it explicit} analysis of the second variation. 
On the round sphere and hyperbolic space, these computations can no longer be carried out explicitly.  To overcome this challenge,  we introduce a new technique, based on the maximum principle, to perform an {\it implicit spectral analysis}.
We exploit the symmetry of the minimizers, which are balls, (thus indirectly using their explicit form), and use an ODE maximum principle argument to establish the desired spectral gap. 
 To our knowledge, this is the first time this method has been used in a quantitative stability estimate or in a spectral analysis otherwise.
\\

\medskip

The remainder of the paper is organized in the following way. In Section~\ref{sec: qacf} we prove the quantitative estimates for the Alt-Caffarelli-Friedman monotonicity formula of Theorem~\ref{t:stability}. We present the proof in the cases $n=2 $ and $n\geq 3$ separately because the two-dimensional case is simpler and provides a basic outline of the proof for higher dimensions. 
Corollary~\ref{c:uniqueblowup} is also proven in this section.

In Section~\ref{sec: sperner}, we prove Theorem~\ref{thm: quantitative FK general}. In the interest of clarity, we present the proof in the specific case of the round sphere, which is also the essential case in our application to Theorem~\ref{t:stability}. The proof carries over almost verbatim to Euclidean space and hyperbolic space, and we therefore only remark on the necessary modifications to the proof in these cases.  As we have mentioned, the regularity estimates of Step (1), as well as the details of the selection prinicple, are shown in our companion paper \cite{AKN1}. Section~\ref{sec: sperner} is thus largely dedicated to proving Step (2) in Section~\ref{sec: NSS}, with the regularity results of \cite{AKN1} recalled and the conclusion of Theorem~\ref{thm: quantitative FK general} drawn in Section~\ref{sec: SP}.


%
%
%
%
\section{Quantitative Stability for the Alt-Caffarelli-Friedman Monotonicity Formula}\label{sec: qacf}
This section is dedicated to the proof of Theorem~\ref{t:stability} and Corollary~\ref{c:uniqueblowup}. The cases $n=2$ and $n\geq 3$ are carried out separately in Sections~\ref{sec: 2d qacf} and \ref{sec: nd qacf} respectively. Corollary~\ref{c:uniqueblowup} is shown in Section~\ref{ssec: blowups}. We note that if $\log(J(1)/J(\rho)) \geq 1$, then Theorem \ref{t:stability} is true simply by choosing $\beta_i=\| u \|_{L^2}(B_1)$ and $\nu = e_1$ and choosing $C$ large enough depending only on $n$. Therefore, throughout this section we assume that $\log(J(1)/J(\rho)) <1$. 
We begin by showing that it is sufficient to prove Theorem \ref{t:stability} with the inequality 
\[
 \int_{B_1 \setminus B_{\r}} \left(u_1 -\beta_1 (x\cdot \nu)^+ \right)^2 + \left(u_2 -\beta_2 (x\cdot \nu)^-\right)^2
  \leq C\log\left(\frac{J(1)}{J(\r)}\right) \sum_{i=1}^2 \| u_i \|_{W^{1,2}(B_1)}^2. 
\]
Notice that in the above inequality the norm is $W^{1,2}$ rather than $L^2$. This next Lemma will show how  control the stronger $W^{1,2}$ norm with the weaker $L^2$ norm. 

\begin{lemma} \label{l:improved}
 Let $u_1, u_2$ satisfy the assumptions of Theorem \ref{t:stability}.  
 There exists a constant $C$ depending only on dimension $n$ such that 
 \[
  \int_{B_1} |\nabla u_i|^2  \leq C \frac{J(1)}{J(1/2)} \int_{B_1} u_i^2, 
 \]
 for $i=1,2$. 
\end{lemma}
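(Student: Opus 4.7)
The plan is to combine the ACF monotonicity of $J$ with a Caccioppoli-type estimate for the nonnegative subharmonic functions $u_i$. First I would introduce the ACF energies $E_i(r) := \int_{B_r}|\nabla u_i|^2\,|x|^{2-n}\,dx$ in dimension $n\geq 3$ (dropping the weight when $n=2$), so that $J(r) = r^{-4} E_1(r) E_2(r)$. Each $E_i$ is individually nondecreasing in $r$ (the integrand is nonnegative and the domains are nested), so in the nontrivial case $E_i(1/2)>0$ one has
\[
\frac{E_i(1)}{E_i(1/2)} \;\leq\; \frac{E_1(1)\,E_2(1)}{E_1(1/2)\,E_2(1/2)} \;=\; 16\,\frac{J(1)}{J(1/2)}.
\]
Since $|x|^{2-n}\geq 1$ on $B_1$ for $n\geq 3$ (with equality when $n=2$), one has $\int_{B_1} |\nabla u_i|^2 \leq E_i(1)$, so the lemma reduces to the weighted Caccioppoli bound
\[
E_i(1/2) \;\leq\; C \int_{B_1} u_i^2.
\]

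The next step is to prove this bound. For $n=2$ it is the standard Caccioppoli inequality for the subharmonic function $u_i$: test the distributional inequality $\Delta u_i^2 \geq 2|\nabla u_i|^2$ (which holds in $B_2$ since $u_i\geq 0$ is continuous subharmonic) against $\phi^2$ for $\phi\in C_c^\infty(B_1)$ with $\phi\equiv 1$ on $B_{1/2}$, integrate by parts, and apply Young's inequality. For $n\geq 3$ the same scheme works with the weighted test function $\phi^2|x|^{2-n}$, regularized by an auxiliary inner cutoff $\psi_\varepsilon$ vanishing on $B_\varepsilon$. The key algebraic observation is that $|x|^{2-n}$ is harmonic away from $0$, so the apparently problematic term $\int \phi^2 \psi_\varepsilon^2\,\nabla|x|^{2-n}\cdot\nabla u_i^2$ can be integrated by parts a second time: the distributional $\Delta|x|^{2-n}$ piece is a Dirac mass at $0$ killed by $\psi_\varepsilon$, and what remains is supported where $\nabla\phi\neq 0$, namely the annulus $B_1\setminus B_{1/2}$ on which the weight is bounded, and so yields a contribution $\lesssim \int_{B_1}u_i^2$.

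The main obstacle will be the $\varepsilon\to 0$ limit. The terms arising from $\nabla\psi_\varepsilon$ are supported on $B_{2\varepsilon}\setminus B_\varepsilon$, with integrands of order $u_i^2\,\varepsilon^{-n}$ over a set of volume $\sim \varepsilon^n$, so they vanish in the limit only if $u_i(0)=0$. Fortunately this is automatic in the only relevant case $J(1/2)>0$: were $u_i(0)>0$ for some $i$, continuity would give $u_i>0$ in a neighborhood of the origin, and the disjointness constraint $u_1 u_2=0$ would then force $u_j\equiv 0$ there, yielding $E_j(1/2)=0=J(1/2)$ and making the lemma trivial. Hence we may reduce to $u_i(0)=0$ for both $i$, whereupon continuity gives $\sup_{B_{2\varepsilon}}u_i^2\to 0$, the regularization passes to the limit, and the desired weighted Caccioppoli bound follows, completing the proof.
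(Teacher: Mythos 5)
Your main chain of estimates is sound and, in its second half, genuinely different from the paper's. The first step (using the product structure $J(r)=r^{-4}E_1(r)E_2(r)$ together with the monotonicity of each weighted energy $E_i$ to get $E_i(1)\le 16\,\frac{J(1)}{J(1/2)}E_i(1/2)$) is essentially the paper's normalization argument in disguise. Where you diverge is in bounding the weighted energy on the half ball: the paper invokes the Caffarelli--Salsa estimate $\int_{B_\rho}|\nabla u|^2|x|^{2-n}\le C\int_{\partial B_\rho}(|\nabla u|^2+u^2)$, then selects a good radius $\rho_1$, applies Caccioppoli, and uses monotonicity of spherical $L^2$-means to reach $C\int_{B_1}u^2$; you instead prove the interior weighted Caccioppoli bound $\int_{B_{1/2}}|\nabla u_i|^2|x|^{2-n}\le C\int_{B_1}u_i^2$ directly, by testing $\Delta(u_i^2)\ge 2|\nabla u_i|^2$ against $\phi^2\psi_\varepsilon^2|x|^{2-n}$ and using that $|x|^{2-n}$ is harmonic away from the origin. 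That is a clean, self-contained substitute for the boundary-sphere argument, and it buys a proof that does not need the good-radius selection or the citation to \cite{CaffSalsa}.

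There is, however, one incorrect step: your disposal of the case $u_i(0)>0$. If, say, $u_1(0)>0$, continuity and the constraint $u_1u_2=0$ only force $u_2\equiv 0$ on some small ball $B_\delta$; this gives $E_2(\delta)=0$, hence $J(0^+)=0$, but it does \emph{not} give $E_2(1/2)=0$ or $J(1/2)=0$, since $u_2$ may be nontrivial in $B_{1/2}\setminus B_\delta$. So, as written, you have no bound for $\int_{B_1}|\nabla u_1|^2$ in that case. The gap is easy to close, and in fact no case analysis is needed: the terms produced by $\nabla\psi_\varepsilon$ are of size $C\sup_{B_{2\varepsilon}}u_i^2$ and tend to $C\,u_i(0)^2$ as $\varepsilon\to 0$, while the sub-mean value property of the subharmonic function $u_i$ gives $u_i(0)^2\le C(n)\int_{B_1}u_i^2$, so this error is absorbed into the right-hand side anyway; equivalently, $\Delta|x|^{2-n}=-c_n\delta_0$ carries a favorable sign, so the singular term never hurts. (Alternatively one may invoke the standing ACF normalization $u_i(0)=0$, which the paper itself uses when quoting \eqref{e:cs}, but your stated justification for discarding the case $u_i(0)>0$ is false and should be replaced by one of these arguments.)
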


\begin{proof}
 We note that multiplying either $u_i$ by a constant leaves $J(1)/J(1/2)$ invariant.  
 Thus, we may assume without loss of generality that 
 \[
  1=\int_{B_1} \frac{|\nabla u_1|^2}{|x|^{n-2}} = \int_{B_1} \frac{|\nabla u_2|^2}{|x|^{n-2}}. 
 \]
Then for any $\rho \in [1/2,3/4]$ we have 
 \[
 \begin{aligned}
   \int_{B_1} |\nabla u_2|^2 &\leq 
   \int_{B_1} \frac{|\nabla u_2|^2}{|x|^{n-2}} \\
  &=   \left(\int_{B_1} \frac{|\nabla u_1|^2}{|x|^{n-2}}\right) \left(\int_{B_1} \frac{|\nabla u_2|^2}{|x|^{n-2}}\right) \\
  &\leq 2^5\left( \frac{J(1)}{J(1/2)}\right)  \left(\int_{B_{1/2}} \frac{|\nabla u_1|^2}{|x|^{n-2}}\right) \left(\int_{B_{1/2}} \frac{|\nabla u_2|^2}{|x|^{n-2}}\right)  \\
  &\leq  2^5\left( \frac{J(1)}{J(1/2)}\right)  \left(\int_{B_{1}} \frac{|\nabla u_1|^2}{|x|^{n-2}}\right) \left(\int_{B_{\rho}} \frac{|\nabla u_2|^2}{|x|^{n-2}}\right)  \\
  &=2^5\left( \frac{J(1)}{J(1/2)}\right)  \left(\int_{B_{\rho}} \frac{|\nabla u_2|^2}{|x|^{n-2}}\right)  \\
  \end{aligned}
 \]
 
 We now use the fact that $u_2 \geq 0$, and $\Delta u_2 \geq 0$ to obtain (see \cite{CaffSalsa}) that 
 \[
  \int_{B_{\rho}} \frac{|\nabla u_2|^2}{|x|^{n-2}} \leq  C \int_{\partial B_{\rho}} |\nabla u_2|^2 + u_2^2.  
 \]
 Now there exists $\rho_1 \in [1/2,3/4]$ such that 
 \[
  \int_{\partial B_{\rho_1}} |\nabla u_2|^2 \leq 4 \int_{B_{3/4}} |\nabla u_2|^2. 
 \]  
 We now use that $u_2$ is subharmonic and the Caccioppoli inequality to conclude that 
  \[
  \int_{\partial B_{\rho_1}} |\nabla u_2|^2 \leq 4 \int_{B_{3/4}} |\nabla u_2|^2 \leq C \int_{B_1} u_2^2. 
 \]  
 Using that $u_2$ is subharmonic we have that for any $\rho > \rho_1$ that 
 \[
  \int_{\partial B_{\rho_1}} u_2^2 \leq \frac{\rho_1^{n-1}}{\rho^{n-1}} \int_{\partial B_{\rho}} u_2^2 \leq  \int_{\partial B_{\rho}} u_2^2. 
 \]
 Then
 \[
 \begin{aligned}
   \int_{\partial B_{\rho_1}} u_2^2 &\leq \frac{1}{1- 3/4} \int_{3/4}^1  \int_{\partial B_{\rho_1}} u_2^2 \ d\rho \leq 4 \int_{3/4}^1  \int_{\partial B_{\rho}} u_2^2 \ d\rho \\
   &= 4\int_{B_1 \setminus B_{3/4}} u_2^2   \leq 4\int_{B_1} u_2^2.  
  \end{aligned}
 \]
 Thus we have shown that 
 \[
  \int_{B_1} |\nabla u_2|^2 \leq C \left(\frac{J(1)}{J(1/2)}\right) \int_{B_1} u_2^2. 
  \]
  The same argument applies to $u_1$. 
\end{proof}

\subsection{The two dimensional case}\label{sec: 2d qacf}

Throughout the section,  we let $n=2$ and let $u_1,u_2:B_1 \to \R$ be two nonnegative functions satisfying \eqref{eqn: harmonic functions}. Let $J=J[u_1, u_2]:(0,1)\to \R_+$ be the two dimensional Alt-Caffarelli-Friedman monotonicity formula defined in \eqref{eqn: ACF 2d}.
 
To begin, we prove the monotonicity of the functional $J(r)$ following \cite{acf84, CaffSalsa}, along the way introducing some notation and outlining the main ideas of the proof of Theorem~\ref{t:stability} when $n=2$. 
We directly compute
\begin{equation}\label{eqn: derivative1 2d}
	\log(J(r))' = \frac{\int_{\pa B_r} |\na u_1|^2}{\int_{B_r} |\na u_1|^2}  +\frac{\int_{\pa B_r} |\na u_2|^2}{\int_{B_r} |\na u_2|^2} -\frac{4}{r}.
\end{equation}
 For each fixed $r \in (0,1]$ and for $i=1,2$, we denote by $\{\lambda_{k,r}^i\}$ and $\{Y_{k,r}^i\}$ the Dirichlet eigenvalues and eigenfunctions of the Laplacian $-\pa_{\tau \tau}$ on the set $\Omega_r^i= \{u_i>0\} \cap \pa B_r$, with the eigenfunctions normalized so that $\|Y_{k,r}^i\|_{L^2(\Omega^i_r)}=1$. Here and in the sequel, we let $\pa_\tau$ denote the tangential derivative on $\pa B_r,$ so that in polar coordinates $(r, \theta)$ for $\R^2$, we have  $\pa_\tau = \frac{1}{r} \pa_{\theta}$. The positivity set $\Omega_r^i$ comprises a union of circular arcs in $\pa B_r$, and so the the first eigenvalue and eigenfunction of this set are the (explicit) first eigenvalue and eigenfunction of the longest circular arc. More specifically, define  $\theta_i(r) \in [0, 2\pi]$  so that the  length of the longest  circular arc in $\Omega_r^i$  is  $r \theta_i(r)$. 
The first eigenvalue and eigenfunction of $\Omega_r^i$ are given by (up to the obvious rotation)
\begin{equation}\label{eqn: 2d first eval}
\lambda_{1,r}^i = \frac{\pi^2}{r^2 \theta_i(r)^2}, \qquad\qquad Y_{1,r}^i = \sqrt{\frac{2}{r \theta_i(r)}}\, \sin\Big(r \sqrt{\lambda_ {1,r}^i}\theta \Big)^+ .
\end{equation}
Here we have written the eigenfunction in polar coordinates $(r, \theta)$ for $\R^2$. It is worth noting that for a semicircle in $\pa B_r,$ the first eigenfunction is $\lambda = 1/r^2$ and the first eigenfunction is $Y= \sqrt{2/\pi r} \sin( \theta)^+,$ whose one-homogeneous extension is a linear function restricted to a half plane.
Observe that $\theta_1(r) + \theta_2(r) \leq 2\pi$ because the supports of $u_1$ and $u_2$ are disjoint.    

Now, we note that for $i=1,2$,
\[
\begin{aligned}
 &\int_{\partial B_r} |\nabla u_i|^2 
 = \int_{\partial B_r} (\partial_{\tau} u_i)^2 + (\partial_r u_i^2)
 \\
 & = \left( \int_{\partial B_r} (\partial_{\tau} u_i)^2 - \lambda_{1,r}^i u_i^2\right) 
  +\left(\int_{\partial B_r} \lambda_{1,r}^i u_i^2 - 2\sqrt{\lambda_{1,r}^i} u_i \partial_r u_i \right) + \left( \int_{\partial B_r} 2\sqrt{\lambda_{1,r}^i} u_i \partial_r u_i\right) . 
 \end{aligned}
\]
Using now the subharmonicity and nonnegativity of $u_i$ we have 
\[
 \int_{B_r} |\nabla u_i|^2 \leq \int_{\partial B_r} u_i  \pa_r u_i, 
\]
so that dividing by $\int_{\partial B_r} |\nabla u_i|^2$, summing in $i$, and subtracting $4/r$ we obtain from \eqref{eqn: derivative1 2d} and \eqref{eqn: 2d first eval} that 
\begin{equation}\label{eqn: logJprime and deltas}
	\log(J(r))'  \geq \delta_A(r) + \delta_B(r)  +\delta_C(r)\,,
\end{equation}
where we define
\begin{align*}
	\delta_A(r) &= \sum_{i=1}^2 \ \frac{1}{\int_{B_r}| \na u_i|^2}\left( \int_{\pa B_r} (\pa_\tau u_i)^2 -\lambda_{1,r}^i\int_{\pa B_r} u_i^2\right)\,,
	\\
	\delta_B(r) & =\sum_{i=1}^2 \ \frac{1}{\int_{B_r}| \na u_i|^2}
	\left(\lambda_{1,r}^i\int_{\pa B_r}  u_i^2 + \int_{\pa B_r}(\pa_r u_i)^2 -2\sqrt{\lambda_{1,r}^i} \int_{\pa B_r} u_i \pa_r u_i\right)\,,
	\\
	\delta_C(r) & = \frac{2}{r} \left( \frac{\pi}{\theta_1(r)} +  \frac{\pi}{\theta_2(r)}-2\right)\,.
\end{align*}
The ACF functional $J(r)$ is shown to be monotone in \cite{acf84, CaffSalsa} by showing the nonnegativity of each of $\delta_A(r), \delta_B(r)$, and $\delta_C(r)$; this is in particular a consequence of Lemma~\ref{l:3estimate} below. The quantitative monotonicity formula of  Theorem~\ref{t:stability} will be established by quantifying the loss in each inequality $\delta_{(\cdot)}(r) \geq 0$ and integrating with respect to $r$. Let us informally discuss the information contained in each $\delta_{(\cdot)}(r)$.

The quantity $\delta_A(r)$ is nonnegative by the Poincar\'e inequality on $\Omega_r^i$, and equals zero for a given $r \in (0,1]$ if and only if $u_i$ restricted to $\pa B_r$ is equal to the first eigenfunction $Y_{1,r}^i$ of $\Omega_r^i$. By quantifying the positivity of $\delta_A(r)$ in \eqref{eqn: deltaA est} in Lemma~\ref{l:3estimate} below, we see that $\delta_A(r)$ quantitatively controls the distance of $u_i$ to the eigenfunction on this set. 

The term in parentheses in the definition of $\delta_B(r)$ can be realized as a square as in \eqref{eqn: deltaB est} in Lemma~\ref{l:3estimate} below. This immediately shows that $\delta_B(r)$ is nonnegative for all $r \in (0,1)$. Furthermore, it provides quantitative control on how closely the restriction of $u_i$ to $\pa B_r$ behaves like a homogeneous function with homogeneity $\sqrt{\lambda_{1,r}^i}/r$. 

The quantity $\delta_C(r)$ can be bounded below by square functions; see \eqref{eqn: deltaC est} in Lemma~\ref{l:3estimate} below, which in particular shows that $\delta_C(r)\geq 0$. The estimate \eqref{eqn: deltaC est} shows that  the longest circular arc in $\Omega_r^i$ (and thus $\Omega_r^i$ itself) is quantitatively close to a semicircle in $\pa B_r$, and the corresponding first eigenvalue $\lambda_{1,r}^i$ is quantitatively close to the first eigenvalue $1/r^2$ of a semicircle in $\pa B_r$.

The proof of Theorem~\ref{t:stability} roughly goes as follows. By integrating $\delta_B(r)$ and $\delta_C(r)$ from $\r$ to $1$, we show in Lemma~\ref{l:onehomogen} that each $u_i$ is quantitatively close to the one-homogeneous function defined by $r u_i(1,\theta)$ in polar coordinates. Then, in Proposition~\ref{l:approximate2}, we apply the estimate \eqref{eqn: deltaA est} for $\delta_A(1)$ to deduce that $ u_i(1,\theta)$ is almost equal to a multiple of first eigenfunction $Y_{1,1}^i$ of $\Omega_1^i$, and applying the estimate \eqref{eqn: deltaC est} once again, we see that $Y_{1,1}^i$ is quantitatively close to the first eigenfunction of the semicircle. Finally, Theorem~\ref{t:stability} follows from Proposition~\ref{l:approximate2} once we guarantee that the semicircles corresponding to $u_1$ and $u_2$ do not have too much overlap.  \\

 The following lemma provides basic quantitative lower bounds for each of $\delta_A(r), \delta_B(r)$, and $\delta_C(r)$ discussed above. Since each of the right-hand sides of \eqref{eqn: deltaA est}-\eqref{eqn: deltaC est} are nonnegative, Lemmma~\ref{l:3estimate} and \eqref{eqn: logJprime and deltas} in particular imply the monotonicity of $J(r)$.

\begin{lemma}  \label{l:3estimate} For each $r \in (0,1]$ and $i=1,2$, let $Y_{1,r}^i$ be the eigenfunction defined in \eqref{eqn: 2d first eval} and let $\beta_{1,r}^i = \int_{\pa B_r} u_i Y_{1,r}^i$. 
	 We have
	\begin{align}
	\label{eqn: deltaA est}	\delta_A(r) & \geq \sum_{i=1}^2 \ \frac{(\lambda_{2,r}^i -\lambda_{1,r}^i ) }{\int_{B_r}| \na u_i|^2} \int_{\pa B_r} \left(u_i-\beta_{1,r}^iY_{1,r}^i\right)^2,\\
	\label{eqn: deltaB est}
		\delta_B(r) &= \sum_{i=1}^2 \ \frac{1}{\int_{B_r}| \na u_i|^2} \int_{\pa B_r} \left( \sqrt{\lambda_{1,r}^i} u_i - \pa_r u_i\right)^2, \\
	\label{eqn: deltaC est}	
		\delta_C(r) &\geq \sum_{i=1}^2\frac{2}{r} \frac{(\pi - \theta_i(r))^2}{\theta_i(r)(2\pi - \theta_i(r))}\geq \sum_{i=1}^2\frac{1}{\sqrt{\lambda_{1,r}^i}} \left( \sqrt{\lambda_{1,r}^i} - \frac{1}{r}\right)^2.
	\end{align}
\end{lemma}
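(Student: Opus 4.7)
The plan is to prove the three estimates in \eqref{eqn: deltaA est}--\eqref{eqn: deltaC est} one at a time, each by an essentially algebraic manipulation once the right structural observation is made.

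For \eqref{eqn: deltaA est}, I would use the spectral decomposition on the arc union $\Omega_r^i$. Since $\{Y_{k,r}^i\}$ is an $L^2(\Omega_r^i)$-orthonormal Dirichlet eigenbasis for $-\partial_{\tau\tau}$, writing $u_i|_{\partial B_r}=\sum_{k}\beta_{k,r}^i Y_{k,r}^i$ with $\beta_{k,r}^i=\int_{\partial B_r}u_i Y_{k,r}^i$, Parseval gives $\int_{\partial B_r}u_i^2=\sum_k(\beta_{k,r}^i)^2$ and $\int_{\partial B_r}(\partial_\tau u_i)^2=\sum_k \lambda_{k,r}^i(\beta_{k,r}^i)^2$. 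Subtracting $\lambda_{1,r}^i$ times the first from the second yields $\sum_{k\ge 2}(\lambda_{k,r}^i-\lambda_{1,r}^i)(\beta_{k,r}^i)^2\ge(\lambda_{2,r}^i-\lambda_{1,r}^i)\sum_{k\ge 2}(\beta_{k,r}^i)^2$, and the last sum equals $\int_{\partial B_r}(u_i-\beta_{1,r}^i Y_{1,r}^i)^2$ by Parseval again. Dividing by $\int_{B_r}|\nabla u_i|^2$ and summing in $i$ gives \eqref{eqn: deltaA est}.

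For \eqref{eqn: deltaB est}, the claim is a literal completion of the square: the integrand $(\sqrt{\lambda_{1,r}^i}u_i-\partial_r u_i)^2=\lambda_{1,r}^i u_i^2+(\partial_r u_i)^2-2\sqrt{\lambda_{1,r}^i}\,u_i\partial_r u_i$ integrates to the parenthesized expression in the definition of $\delta_B(r)$. So \eqref{eqn: deltaB est} is an identity after dividing by $\int_{B_r}|\nabla u_i|^2$ and summing.

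The slightly more substantial step is \eqref{eqn: deltaC est}. Here the key input is the disjointness constraint $\theta_1(r)+\theta_2(r)\le 2\pi$, so $\theta_2\le 2\pi-\theta_1$ and hence $\pi/\theta_2\ge\pi/(2\pi-\theta_1)$. Substituting,
\[
\frac{\pi}{\theta_1}+\frac{\pi}{\theta_2}-2\ \ge\ \frac{\pi}{\theta_1}+\frac{\pi}{2\pi-\theta_1}-2\ =\ \frac{2(\pi-\theta_1)^2}{\theta_1(2\pi-\theta_1)},
\]
after clearing denominators. By symmetry the same bound holds with indices swapped, and averaging the two yields $\delta_C(r)\ge\sum_i\frac{2}{r}\frac{(\pi-\theta_i)^2}{\theta_i(2\pi-\theta_i)}$. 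For the second inequality in \eqref{eqn: deltaC est}, I would just compute with the explicit formula $\sqrt{\lambda_{1,r}^i}=\pi/(r\theta_i)$ from \eqref{eqn: 2d first eval}: one finds $(\sqrt{\lambda_{1,r}^i}-1/r)^2/\sqrt{\lambda_{1,r}^i}=(\pi-\theta_i)^2/(\pi r\theta_i)$, so the desired inequality reduces to $2/(2\pi-\theta_i)\ge 1/\pi$, i.e.\ $\theta_i\le 2\pi$, which holds trivially.

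No step is a real obstacle; the only place where one has to think is the rational inequality in \eqref{eqn: deltaC est}, where the constraint $\theta_1+\theta_2\le 2\pi$ has to be used at exactly the right moment so that the minimum of the left-hand side (attained when $\theta_1+\theta_2=2\pi$) can be computed in closed form and then symmetrized.
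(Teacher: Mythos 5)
Your proposal is correct and follows essentially the same route as the paper: Parseval expansion in the Dirichlet eigenbasis of $\Omega_r^i$ for \eqref{eqn: deltaA est}, recognizing the integrand of $\delta_B$ as a perfect square for \eqref{eqn: deltaB est}, and using $\theta_2(r)\le 2\pi-\theta_1(r)$, the algebraic identity $\tfrac{\pi}{\theta}+\tfrac{\pi}{2\pi-\theta}-2=\tfrac{2(\pi-\theta)^2}{\theta(2\pi-\theta)}$, symmetrization, and the explicit formula $\sqrt{\lambda_{1,r}^i}=\pi/(r\theta_i(r))$ for \eqref{eqn: deltaC est}. The only quibble is cosmetic: the final reduction $2/(2\pi-\theta_i)\ge 1/\pi$ holds because $2\pi-\theta_i\le 2\pi$ (i.e.\ $\theta_i\ge 0$), not because $\theta_i\le 2\pi$, but either way it is trivially true on the admissible range.
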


\begin{proof} We fix $r \in (0,1]$, and for notational simplicity omit the subscripts $r$ in the remainder of the proof.  We first prove \eqref{eqn: deltaA est}. For $i=1,2$, let $\beta_k^i = \int_{\pa B_r} u_i Y_k^i$, so, expanding the term in parentheses of $\delta_A(r)$ in this basis, we have
\begin{align*}
\int_{\pa B_r} (\pa_\tau u_i)^2-\lambda_1^i \int_{\pa B_r} u_i^2 
&= \sum_{k=1}^\infty \lambda_k^i (\beta_k^i)^2- \lambda_1^i \sum_{k=1}^\infty (\beta_k^i)^2 \\
&\geq (\lambda_2^i -\lambda_1^i )\sum_{k=2}^\infty (\beta_k^i)^2 =  (\lambda_2^i -\lambda_1^i )  \int_{\pa B_r} (u_i-\beta_1^iY_1^i)^2.
\end{align*}
Dividing this expression by $\int_{B_r} |\na u_i|^2 $ and summing from $i=1$ to $2$ gives \eqref{eqn: deltaA est}. Next, \eqref{eqn: deltaB est} is immediate by writing the integrand as a square.
Finally, to prove \eqref{eqn: deltaC est}, recall that $\theta_2(r) \leq 2\pi - \theta_1(r) $, and so
\begin{align*}
\delta_C(r) 
 \geq \frac{2}{r} \left( \frac{\pi}{\theta_1(r)}  +\frac{ \pi}{2\pi - \theta_1(r)} - 2\right) = \frac{2}{r}\left(\frac{2\pi^2}{\theta_1(r)(2\pi - \theta_1(r))}- 2\right)= \frac{4}{r} \frac{(\pi - \theta_1(r))^2}{\theta_1(r)(2\pi - \theta_1(r))}.
 \end{align*}
The analogous expression holds with $\theta_2$ in place of $\theta_2$. Summing these expressions  and dividing by $2$ gives the first estimate in \eqref{eqn: deltaC est}. For the second, use \eqref{eqn: 2d first eval} to see
\[
\frac{4}{r} \frac{(\pi - \theta_1(r))^2}{\theta_1(r)(2\pi - \theta_1(r))}
 = \frac{1}{\sqrt{\lambda_1^1}}\frac{4\pi}{(2\pi - \theta_1(r) ) }\left( \sqrt{\lambda_1^1} - \frac{1}{r}\right)^2	\geq \frac{2}{\sqrt{\lambda_1^1}} \left( \sqrt{\lambda_1^1} - \frac{1}{r}\right)^2.
\]
Again, the same expression holds with $\theta_2$ in place of $\theta_2$. Summing these expressions  and dividing by $2$ gives the second estimate in \eqref{eqn: deltaC est}. 	This concludes the proof of the lemma.
\end{proof}

The following lemma makes use of the estimates for $\delta_B(r)$ and $\delta_C(r)$ in Lemma~\ref{l:3estimate} and shows that $u_1$ and $u_2$ are quantitatively close to being $1$-homogeneous.
\begin{lemma}   \label{l:onehomogen}
Let $u_1,u_2$ be as above. Then for a universal constant $C>0$ and any $\rho \in (0,1)$,
 \[
  \int_{B_1 \setminus B_{\rho}} [ru_i(1,\theta)-u_i(r,\theta)]^2 
   \leq C\log\left(\frac{J(1)}{J(\rho)}\right) \|u_i \|_{W^{1,2}(B_1)}^2 .  
 \]
\end{lemma}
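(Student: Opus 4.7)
The plan is to reduce the estimate to a bound on $\int_{B_1 \setminus B_\rho} (\partial_r u_i - u_i/r)^2\, dA$ (i.e., a pointwise measurement of how far $u_i$ is from being $1$-homogeneous), and then to decompose the radial deviation as
\[
\partial_r u_i - \frac{u_i}{r} \;=\; \bigl(\partial_r u_i - \sqrt{\lambda_{1,r}^i}\, u_i\bigr) + \bigl(\sqrt{\lambda_{1,r}^i} - 1/r\bigr) u_i,
\]
controlling the two pieces by the $\delta_B$ and $\delta_C$ estimates from Lemma~\ref{l:3estimate} respectively.

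For the reduction, I set $v(s,\theta):=u_i(s,\theta)/s$ and use the fundamental theorem of calculus to write
\[
r u_i(1,\theta) - u_i(r,\theta) \;=\; r\int_r^1 \frac{\partial_s u_i(s,\theta) - u_i(s,\theta)/s}{s}\, ds.
\]
Applying Cauchy--Schwarz with weight $s$ (chosen so that $\int_r^1 s^{-3}\,ds \le 1/(2r^2)$, which exactly absorbs the leading $r^2$ and avoids a $\log(1/r)$ factor) gives the pointwise bound $[ru_i(1,\theta) - u_i(r,\theta)]^2 \le \tfrac{1}{2}\int_r^1 s\,(\partial_s u_i - u_i/s)^2\, ds$. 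Multiplying by $r$, integrating over $(\rho,1)\times[0,2\pi)$ and swapping the order of integration (using $\int_\rho^s r\, dr \le s^2/2$ and $s\le 1$) reduces the claim to showing $\int_{B_1\setminus B_\rho} (\partial_r u_i - u_i/r)^2\, dA \le C \log(J(1)/J(\rho))\,\|u_i\|_{W^{1,2}(B_1)}^2$.

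After expanding $(\partial_r u_i - u_i/r)^2 \le 2(\partial_r u_i - \sqrt{\lambda_{1,r}^i} u_i)^2 + 2(\sqrt{\lambda_{1,r}^i} - 1/r)^2 u_i^2$, the first piece is handled directly by \eqref{eqn: deltaB est}: with the trivial bound $\int_{B_r}|\nabla u_i|^2 \le \|u_i\|_{W^{1,2}(B_1)}^2$ one obtains $\int_{\partial B_r}(\partial_r u_i - \sqrt{\lambda_{1,r}^i} u_i)^2 \le \delta_B(r)\,\|u_i\|_{W^{1,2}(B_1)}^2$, and since \eqref{eqn: logJprime and deltas} gives $\int_\rho^1 \delta_B\, dr \le \log(J(1)/J(\rho))$, integrating in $r$ closes this part. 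For the second piece, Poincar\'e on $\Omega_r^i$ gives $\int_{\partial B_r} u_i^2 \le \lambda_{1,r}^{i,-1}\int_{\partial B_r}(\partial_\tau u_i)^2$, and the algebraic identity $(\sqrt{\lambda_{1,r}^i} - 1/r)^2/\lambda_{1,r}^i = (\pi - \theta_i(r))^2/\pi^2$, together with $(\pi-\theta_i)^2/(\theta_i(2\pi-\theta_i)) \le (r/2)\delta_C(r)$ from \eqref{eqn: deltaC est} and $\theta(2\pi - \theta) \le \pi^2$, gives the pointwise estimate
\[
\bigl(\sqrt{\lambda_{1,r}^i} - 1/r\bigr)^2 \int_{\partial B_r} u_i^2 \;\le\; \tfrac{r}{2}\,\delta_C(r)\, \int_{\partial B_r}|\nabla u_i|^2.
\]

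The main obstacle is to integrate this last inequality in $r$ and obtain the bound $C\log(J(1)/J(\rho))\,\|u_i\|_{W^{1,2}(B_1)}^2$: naively, neither $\delta_C(r)$ nor $\int_{\partial B_r}|\nabla u_i|^2$ is uniformly bounded in $r$, so a direct $L^\infty$--$L^1$ pairing fails and one needs finer control. The key additional input I would invoke is the subharmonicity of $u_i^2$: the two-dimensional mean value property then forces $r\mapsto \int_{\partial B_r}u_i^2/r$ to be nondecreasing, so that $\int_{\partial B_r}u_i^2 \le r\int_{\partial B_1}u_i^2 \le C\|u_i\|_{W^{1,2}(B_1)}^2$ uniformly in $r\in(\rho,1)$ by the trace theorem. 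With this uniform bound in place one can bypass the problematic integral $\int\delta_C\int_{\partial B_r}|\nabla u_i|^2\,dr$: decomposing $u_i|_{\partial B_r}$ into its principal eigenfunction $\beta_{1,r}^i Y_{1,r}^i$ (with $\beta_{1,r}^i$ bounded by the trace norm) and its orthogonal complement (absorbed using the spectral gap $\lambda_{2,r}^i - \lambda_{1,r}^i = 3\lambda_{1,r}^i$ on a circular arc via $\delta_A$), one reduces the $\delta_C$-piece to $C\|u_i\|_{W^{1,2}(B_1)}^2 \int_\rho^1 \delta_C(r)\, dr \le C\log(J(1)/J(\rho))\,\|u_i\|_{W^{1,2}(B_1)}^2$, completing the proof.
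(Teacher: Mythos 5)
Your reduction to $\int_{B_1\setminus B_\rho}(\partial_r u_i - u_i/r)^2$ and your treatment of the $\delta_B$ piece are fine, but the $\delta_C$ piece does not close, and this is exactly the point where the paper's proof takes a different (and necessary) route. Estimate \eqref{eqn: deltaC est} controls $\bigl(\sqrt{\lambda_{1,r}^i}-1/r\bigr)^2/\sqrt{\lambda_{1,r}^i}$, not $\bigl(\sqrt{\lambda_{1,r}^i}-1/r\bigr)^2$; since $\sqrt{\lambda_{1,r}^i}=\pi/(r\theta_i(r))$ is unbounded when $\theta_i(r)$ is small, $\int_\rho^1(\sqrt{\lambda_{1,r}^i}-1/r)^2\,dr$ is \emph{not} bounded by $C\int_\rho^1\delta_C(r)\,dr$: for instance $\theta_1(r)\approx\eta$ on a set of radii of measure $\eta^2$ costs only $\int\delta_C\approx\eta$ but makes $\int(\sqrt{\lambda}-1/r)^2\,dr$ of order one. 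Your final decomposition does not repair this: in the component along $Y_{1,r}^i$ you only know $\beta_{1,r}^i$ is \emph{bounded} by the trace norm (no smallness), so that term still requires precisely the unavailable bound on $\int(\sqrt{\lambda}-1/r)^2\,dr$; and the orthogonal component relies on the gap $\lambda_{2,r}^i-\lambda_{1,r}^i=3\lambda_{1,r}^i$, which is false at radii where $\Omega_r^i$ contains two arcs of comparable length (the gap can vanish). In the paper such a gap is only derived at the single radius $r=1$, after using Chebyshev and rescaling to make $\delta_C(1)$ small (Proposition~\ref{l:approximate2}); nothing forces it at a.e.\ $r\in(\rho,1)$, where $\delta_C(r)$ may be large. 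The fallback pairing $\delta_C(r)\cdot\int_{\partial B_r}|\nabla u_i|^2$ that you also mention is a product of two merely $L^1(dr)$ quantities, so it gives no bound either, as you yourself note.

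The structural reason the paper's argument works where yours stalls is that it never needs the pointwise-in-$r$ square of the eigenvalue deviation: it compares $u(r,\theta)$ with $A(r)u(1,\theta)$, $A(r)=e^{-\int_r^1\sqrt{\lambda(t)}\,dt}$, so that the whole radial ODE error is absorbed by $\delta_B$ alone, and then compares $A(r)$ with $r$, where the deviation enters only through $\bigl|\int_r^1(\sqrt{\lambda(s)}-1/s)\,ds\bigr|$. Cauchy--Schwarz with the weight $1/\sqrt{\lambda}$ built into \eqref{eqn: deltaC est}, together with the pointwise bound $\sqrt{\lambda(s)}\le 4\delta_C(s)+2/s$, gives $|A(r)-r|^2\le\bigl(\int_r^1\delta_C\bigr)(4-2\ln r)$, and the logarithmic loss is harmless because it is multiplied by $r$ from the area element. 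If you want to salvage your scheme, you would have to reproduce this gain, e.g.\ by keeping the deviation inside an $r$-integral before squaring rather than squaring it at each radius; as written, the step ``one reduces the $\delta_C$-piece to $C\|u_i\|^2\int_\rho^1\delta_C$'' is a genuine gap.
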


\begin{proof}Fix $i \in \{1,2\}$. For notational simplicity, we omit the sub and superscripts $i$ in this proof. We first note that 
\[
\begin{aligned}
 \int_{B_1 \setminus B_{\rho}} [ru(1,\theta) - u(r,\theta)]^2 &\leq 2\int_{B_1 \setminus B_{\rho}} [ru(1,\theta) - A(r)u(1,\theta)]^2 \\
 &\quad+ 2\int_{B_1 \setminus B_{\rho}} [A(r)u(1,\theta) - u(r,\theta)]^2 = (I) + (II),
 \end{aligned} 
\]
where we define $A(r):=e^{-\int_r^1 \sqrt{\lambda(t)} dt}$ and $\lambda(t) = \lambda_{1,t}$. We bound $(II)$ in the following manner. 
Using that 
\[
u(1,\theta)A(r)-u(r,\theta)=\int_r^1 [u_r(s,\theta) - \sqrt{\lambda(s)}u(s,\theta)]e^{-\int_r^s \sqrt{\lambda(t)}dt} \ ds
\]
 as well as that 
$|e^{-\int_r^s \sqrt{\lambda(t)} dt }|\leq 1$ we have 
\begin{equation} \label{e:sq1}
 |u(1,\theta)A(r)-u(r,\theta)|^2 
 \leq \int_r^{1} |u_r(s,\theta)- \sqrt{\lambda(s)} u(s,\theta)|^2 \ ds.
\end{equation}
Now to continue with $(II)$ we have 
\[
\begin{aligned}
&\int_{B_1 \setminus B_{\rho}} [u(r,\theta) - A(r)u(1,\theta)]^2 
\\
&= \int_{\rho}^1 \int_{\partial B_1} r [u(r,\theta) - A(r)u(1,\theta)]^2 d\theta \ dr 
\\
& \leq \int_{\rho}^1 \int_{\partial B_1} r  \int_r^{1} |u_r(s,\theta)- \sqrt{\lambda(s)} u(s,\theta)|^2 \ ds  \  d \theta \ dr &\quad \text{  by } \eqref{e:sq1}
\\
& \leq \int_{\rho}^1 \int_{\partial B_1} \int_r^{1} s |u_r(s,\theta)- \sqrt{\lambda(s)} u(s,\theta)|^2 \ ds  \  d \theta \ dr  &\quad \text{ since } r \leq s\\
& \leq \int_{\rho}^1 \int_{\partial B_1} \int_{\rho}^{1} s |u_r(s,\theta)- \sqrt{\lambda(s)} u(s,\theta)|^2 \ ds  \  d \theta \ dr &\quad \text{ since } \rho \leq r\\
& \leq \int_{\rho}^1 \int_{\rho}^1  \delta_{B}(s) \left(\int_{B_s} |\nabla u|^2\right) \ ds   \ dr\\
& \leq \left(\int_{\rho}^1  \delta_{B}(s) \ ds \right) \int_{B_1} |\nabla u|^2.
\end{aligned}
\]

In order to bound $(I)$ we first note that if $\sqrt{\lambda(s)} \geq 2/s$, then from \eqref{eqn: deltaC est} we have that $\sqrt{\lambda(s)} \leq 4 \delta_C(s)$, so that 
$\max\{\sqrt{\lambda(s)}, 2/s\} \leq 4\delta_C(s)+ 2/s$. Now using that $e^{-x}$ is $1$-Lipschitz for $x \geq 0$ we have 
\[
 \begin{aligned}
  |A(r)-r| &= \left|e^{-\int_r^1 \sqrt{\lambda(s)} \ ds} -  e^{-\int_r^1 1/s \ ds}\right| \\
  &\leq \left|\int_r^1 \sqrt{\lambda(s)} - 1/s \ ds \right| \\
  &\leq \left(\int_r^1 \frac{(\sqrt{\lambda(s)} - 1/s)^2}{\sqrt{\lambda(s)}} \ ds \right)^{1/2} \left( \int_r^1 \sqrt{\lambda(s)} \ ds\right)^{1/2} \\
  &\leq \left(\int_r^1 \delta_C(s)\ ds \right)^{1/2} \left( \int_r^1 \max\{\sqrt{\lambda(s)},2/s \}\ ds\right)^{1/2} \\
  &\leq \left(\int_r^1 \delta_C(s)\ ds \right)^{1/2} \left( \int_r^1 4\delta_C(s) + 2/s \ ds\right)^{1/2} \\
  &\leq \left(\int_r^1 \delta_C(s)\ ds \right)^{1/2} \left( 4\log(J(1)/J(r))- 2\ln r \right)^{1/2} \\
  &\leq \left(\int_r^1 \delta_C(s)\ ds \right)^{1/2} \left( 4- 2\ln r \right)^{1/2},
 \end{aligned}
\]
so that 
\begin{equation} \label{e:sq2}
|A(r)-r|^2 \leq  \left(\int_r^1 \delta_C(s) \right)(4- 2 \ln r). 
\end{equation} 
To continue with $(I)$, we use the trace inequality on $B_1$ and the fact that $r\mapsto r(4-2\ln r)$ is bounded for $r \in (0,1]$ to find 
\[
 \begin{aligned}
  &\int_{B_1 \setminus B_{\rho}} [ru(1,\theta) - A(r)u(1,\theta)]^2 = \left(\int_{\partial B_1} u^2(1,\theta) \ d\theta \right)\int_{\rho}^1 r [r-A(r)]^2 \ dr \\ 
  &\leq C \| u \|_{W^{1,2}(B_1)}^2 \int_{\rho}^1 r  \left(\int_r^1 \delta_C(s)  \ ds \right)(4- 2 \ln r)  \ dr \leq C \| u \|_{W^{1,2}(B_1)}^2 \int_{\rho}^1   \left(\int_{\rho}^1 \delta_C(s) \ ds \right) \ dr 
  \\
  &\leq C \| u \|_{W^{1,2}(B_1)}^2  \int_{\rho}^1 \delta_C(s) \ ds.   \\
 \end{aligned}
\]
Recalling \eqref{eqn: logJprime and deltas} concludes the proof.
 \end{proof}

This next lemma is a straightforward computation for sine functions that will be used in the proofs of Proposition~\ref{l:approximate2} and Theorem~\ref{t:stability}. 
\begin{lemma} \label{l:shift}
There exists a universal constant $C>0$ such that for any $\hat{\theta} \in [0,2\pi)$ and any $\lambda>0$ with $|1-\sqrt{\lambda}|\leq 1/2$,	
	\begin{align}
\label{eqn: sin int 1}		\int_{S^1} [\sin(\theta + \hat{\theta} )^+ - \sin(\theta)^+]^2 &\leq C\hat{\theta}^2\\	
\label{eqn: sin int 2}	\int_{S^1} [\sin(\sqrt{\lambda} \theta)^+ - \sin(\theta)^+]^2 & \leq C(1-\sqrt{\lambda})^2.
	\end{align}
\end{lemma}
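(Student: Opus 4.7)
The plan is to prove both inequalities by exploiting the fact that the positive part $t \mapsto t^+$ is $1$-Lipschitz and that $\sin$ is $1$-Lipschitz, so that both $\theta \mapsto \sin(\theta)^+$ and $\theta \mapsto \sin(\theta + \hat{\theta})^+$ (respectively $\theta \mapsto \sin(\sqrt{\lambda}\theta)^+$) differ pointwise by a quantity controlled by the relevant parameter. Squaring and integrating over $S^1$ then produces the desired bound with a universal constant.

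For \eqref{eqn: sin int 1}, I would write, for any $\theta \in S^1$,
\[
\bigl|\sin(\theta + \hat{\theta})^+ - \sin(\theta)^+\bigr| \leq \bigl|\sin(\theta+\hat{\theta}) - \sin(\theta)\bigr| \leq |\hat{\theta}|,
\]
where the first inequality uses the $1$-Lipschitz property of $t \mapsto t^+$ and the second uses $|\sin x - \sin y| \leq |x - y|$. Squaring and integrating over $S^1$ yields $\int_{S^1}[\sin(\theta+\hat{\theta})^+ - \sin(\theta)^+]^2 \leq 2\pi \hat{\theta}^2$, which gives \eqref{eqn: sin int 1} with $C = 2\pi$.

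For \eqref{eqn: sin int 2}, set $\mu = \sqrt{\lambda} \in [1/2, 3/2]$ by hypothesis. The same two Lipschitz bounds applied in sequence give
\[
\bigl|\sin(\mu \theta)^+ - \sin(\theta)^+\bigr| \leq \bigl|\sin(\mu\theta) - \sin(\theta)\bigr| \leq |\theta|\,|\mu - 1| \leq 2\pi |1 - \sqrt{\lambda}|
\]
for every $\theta \in [0, 2\pi)$. Squaring and integrating over $S^1$ gives $\int_{S^1}[\sin(\sqrt{\lambda}\theta)^+ - \sin(\theta)^+]^2 \leq 8\pi^3 (1 - \sqrt{\lambda})^2$, which yields \eqref{eqn: sin int 2} with $C = 8\pi^3$.

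Neither step presents a genuine obstacle; the lemma is a routine consequence of Lipschitz continuity. The hypothesis $|1 - \sqrt{\lambda}| \leq 1/2$ plays no essential role in the crude bound above (it merely ensures $\mu$ is bounded in a range convenient for applications), and one could even absorb large values of $\hat{\theta}$ or of $|1 - \sqrt{\lambda}|$ into the constant $C$ by noting that the left-hand sides are uniformly bounded by a universal constant in any case.
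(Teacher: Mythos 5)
Your proof is correct, and for \eqref{eqn: sin int 2} it takes a genuinely different and more elementary route than the paper. For \eqref{eqn: sin int 1} the two arguments are essentially the same: the paper splits into cases according to the signs of $\sin(\theta+\hat\theta)$ and $\sin(\theta)$, which is just a hands-on version of your one-line use of the $1$-Lipschitz bound $|a^+-b^+|\le|a-b|$ combined with $|\sin x-\sin y|\le|x-y|$. For \eqref{eqn: sin int 2}, however, the paper computes the integral $\int_0^\pi[\sin(\sqrt{\lambda}\theta)-\sin\theta]^2\,d\theta$ in closed form, Taylor-expands the resulting smooth function of $\sqrt{\lambda}$ about its zero minimum at $\sqrt{\lambda}=1$ (this is where the hypothesis $|1-\sqrt{\lambda}|\le 1/2$ enters), and then separately estimates the overhang interval $(\pi,\pi/\sqrt{\lambda})$, where it gets a cubic contribution $C(1-\sqrt{\lambda})^3$. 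Your pointwise mean-value bound $|\sin(\sqrt{\lambda}\theta)^+-\sin(\theta)^+|\le|\theta|\,|\sqrt{\lambda}-1|\le 2\pi|\sqrt{\lambda}-1|$, valid at every $\theta\in[0,2\pi)$ regardless of the sign pattern, collapses all of this into one step and yields the same quadratic rate after squaring and integrating; it also automatically covers the extra positivity bump of $\sin(\sqrt{\lambda}\theta)^+$ near $\theta=2\pi$ when $\sqrt{\lambda}>1$, which the paper's "it suffices to consider $(0,\pi)$" glosses over. What you lose is only the finer information (the cubic smallness of the overhang term) and the explicit constants from the closed-form computation, neither of which is needed anywhere in the paper, and your observation that $|1-\sqrt{\lambda}|\le 1/2$ is inessential for the crude bound is accurate.
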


\begin{proof}
 To estimate \eqref{eqn: sin int 1}, from the bound $|\sin'(\theta)|\leq 1$, one has $(\sin(\theta+\hat{\theta})-\sin(\theta))^2 \leq C \theta^2$. If $\sin(\theta + \hat{\theta})>0$ and
 $\sin(\theta)\leq 0$, then $(\sin(\theta+\hat{\theta}))^2 \leq \hat{\theta}^2$. This is sufficient to conclude \eqref{eqn: sin int 1}. 
 
 To estimate \eqref{eqn: sin int 2} we break up the integral over $S^1$ into the regions $(0,\pi)\cup(\pi,\bar \theta)$ where we let $\bar \theta = \pi /\sqrt{\lambda}$. If $\sqrt{\lambda}>1$, then it suffices to only 
 consider the interval $(0,\pi)$. We first have
 \[
 \begin{aligned}
  \int_0^\pi [\sin(\sqrt{\lambda} \theta)^+ - \sin(\theta)^+ ]^2 &\leq  \int_0^\pi [\sin(\sqrt{\lambda} \theta) - \sin(\theta) ]^2 \\
  &= \pi - \frac{\sin((\sqrt{\lambda}-1)\pi)}{\sqrt{\lambda}-1} - \frac{\sin(2\sqrt{\lambda}\pi)}{4\sqrt{\lambda}} + \frac{\sin((\sqrt{\lambda}+1)\pi)}{\sqrt{\lambda}+1}
  \end{aligned}
 \]
 which is a smooth function in $\sqrt{\lambda}$ and has a minimum of zero at $\sqrt{\lambda}=1$, so there exists a universal constant $C$ such that 
 \[
  \int_0^\pi [\sin(\sqrt{\lambda} \theta)^+ - \sin(\theta)^+ ]^2 \leq C (1-\sqrt{\lambda})^2
 \]
 provided that $|\sqrt{\lambda}-1|\leq 1/2$. 
 Furthermore, using a change of variables and a Taylor expansion of $\sin^2(\cdot)$ at $\theta=0$, we have 
 \[
 \int_\pi^{\bar \theta} [\sin(\sqrt{\lambda} \theta)^+ - \sin(\theta)^+ ]^2=\int_\pi^{\bar \theta} \sin^2(\sqrt{\lambda} \theta) = \int_0^{\bar \theta-\pi} \sin^2 (\sqrt{\lambda} \theta)
  \leq C (1-\sqrt{\lambda})^3  
 \]
for a universal constant $C$ when $|\sqrt{\lambda} -1|\leq 1/2$. We conclude \eqref{eqn: sin int 2}. 
\end{proof}

The next proposition contains the core of the proof of Theorem~\ref{t:stability} and shows that $u_1$ and $u_2$ are each well-approximated by a linear function in a quantitative sense. From Lemma~\ref{l:onehomogen}, we know that $u_1$ and $u_2$ are each well-approximated by one-homogeneous functions. In this proposition, we show that these one-homogeneous functions can be well-approximated by linear functions.

\begin{proposition}\label{l:approximate2}   
 Assume $\log(J(1)/J(\r))\leq 1$ with $\rho \in [0,1/2]$.  For $i=1,2,$ let $\hat{\beta}_i = \beta_{1,1}^i = \int_{\pa B_1} u_i Y_{1,1}^i$ and let $\beta_i = \sqrt{2}\hat{\beta}_i/\sqrt{\pi}$. There exists $\theta_i \in [0,2\pi]$ such that if we let $v_i$ be the linear function defined in polar coordinates by $v_i=r\sin(\theta+\theta_i)$, then 
 \[
  \int_{B_1\setminus B_\r} [u_i-\beta_i v_i^+]^2 \leq C\log\left(\frac{J(1)}{J(\r)}\right) \| u_i \|_{W^{1,2}(B_1)}^2.
 \]
\end{proposition}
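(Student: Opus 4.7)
The plan is to combine the one-homogeneous approximation of Lemma \ref{l:onehomogen} with the pointwise spectral estimates of Lemma \ref{l:3estimate} applied at a pigeon-holed intermediate radius $r^{\ast}\in [3/4,1]$, and then to identify the normalization constant via the defining inner product for $\hat\beta_i$. First, since $v_i(r,\theta) = r\sin(\theta+\theta_i)$ is $1$-homogeneous in $r$ and $\int_\rho^1 r^3\,dr \leq 1/4$, the triangle inequality together with Lemma \ref{l:onehomogen} reduces the claim to the boundary estimate
\[
\int_{\partial B_1}\bigl[u_i(1,\theta) - \beta_i\sin(\theta+\theta_i)^+\bigr]^2\,d\theta \leq C\log\bigl(J(1)/J(\rho)\bigr)\|u_i\|_{W^{1,2}(B_1)}^2.
\]

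By \eqref{eqn: logJprime and deltas} and Chebyshev's inequality on $[3/4,1]\subset[\rho,1]$ (using $\rho\leq 1/2$), I would select a radius $r^\ast$ at which simultaneously $\delta_A(r^\ast)+\delta_C(r^\ast)\leq C\log(J(1)/J(\rho))$ and $\int_{\partial B_{r^\ast}}[u_i(r^\ast,\theta)-r^\ast u_i(1,\theta)]^2\,d\theta \leq C\log(J(1)/J(\rho))\|u_i\|_{W^{1,2}}^2$ (the latter coming from Lemma \ref{l:onehomogen} via Fubini). The bound on $\delta_C(r^\ast)$ via \eqref{eqn: deltaC est} forces $|\sqrt{\lambda_{1,r^\ast}^i}\,r^\ast - 1|\leq 1/2$, so the spectral gap $\lambda_{2,r^\ast}^i - \lambda_{1,r^\ast}^i \geq c > 0$; the bound on $\delta_A(r^\ast)$ via \eqref{eqn: deltaA est} then gives $\int_{\partial B_{r^\ast}}[u_i - \beta^i_{1,r^\ast}Y^i_{1,r^\ast}]^2 \leq C\log(J(1)/J(\rho))\|u_i\|_{W^{1,2}}^2$, and Lemma \ref{l:shift} (with $\theta_i$ chosen to align the positivity arc of $\sin(\theta+\theta_i)^+$ with that of $Y^i_{1,r^\ast}$) supplies a comparable $L^2$ approximation of $Y^i_{1,r^\ast}$ by $\sqrt{2/(\pi r^\ast)}\sin(\theta+\theta_i)^+$. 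Chaining these three approximations on $\partial B_{r^\ast}$ yields
\[
\int_{\partial B_1}\bigl[u_i(1,\theta)-\hat\beta^\ast_i\sin(\theta+\theta_i)^+\bigr]^2\,d\theta \leq C\log(J(1)/J(\rho))\|u_i\|_{W^{1,2}}^2,
\]
for an explicit constant $\hat\beta^\ast_i$ built from $\beta^i_{1,r^\ast}$ and $r^\ast$.

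It remains to verify $|\hat\beta^\ast_i - \beta_i|^2 \leq C\log(J(1)/J(\rho))\|u_i\|_{W^{1,2}}^2$. Taking the $L^2(\partial B_1)$ inner product of the above approximation with $Y^i_{1,1}$, using $\hat\beta_i = \int u_i(1)Y^i_{1,1}\,d\theta$ and $\|Y^i_{1,1}\|_{L^2(\partial B_1)}=1$, gives $\hat\beta_i = \hat\beta^\ast_i\int\sin(\theta+\theta_i)^+ Y^i_{1,1}\,d\theta + O\bigl(\sqrt{\log(J(1)/J(\rho))}\,\|u_i\|_{W^{1,2}}\bigr)$. Bootstrapping the $L^2$ approximation of $u_i(1,\cdot)$ --- and using the disjointness constraint $\theta_1(1)+\theta_2(1)\leq 2\pi$ coming from $u_1u_2\equiv 0$ --- to establish $\theta_i(1)=\pi+O(\sqrt{\log(J(1)/J(\rho))})$, the explicit form \eqref{eqn: 2d first eval} together with Lemma \ref{l:shift} identifies the remaining inner product as $\sqrt{\pi/2}+O(\sqrt{\log(J(1)/J(\rho))})$. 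This yields $\hat\beta^\ast_i = \beta_i + O\bigl(\sqrt{\log(J(1)/J(\rho))}\,\|u_i\|_{W^{1,2}}\bigr)$ and completes the proof.

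The main obstacle I anticipate is this final bootstrap: since only integrated control on $\delta_C$ is available, extracting pointwise structural information at $r=1$ (specifically, that $\theta_i(1)$ is close to $\pi$) must be deduced indirectly from the $L^2$ approximation of $u_i(1,\cdot)$. Ensuring $\theta_i(1)$ is not much \emph{larger} than $\pi$ --- rather than merely bounded below --- is where the joint disjointness constraint involving both $u_1$ and $u_2$ plays its essential role.
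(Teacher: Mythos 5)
Your first two steps are sound and essentially parallel the paper's argument: reduce to a boundary estimate on $\pa B_1$ using Lemma~\ref{l:onehomogen} and the $1$-homogeneity of $v_i$, pigeonhole a radius where $\delta_A+\delta_C$ is small, use \eqref{eqn: deltaC est} to get the arc-length/eigenvalue bounds and the spectral gap, \eqref{eqn: deltaA est} to replace the trace by a multiple of the first eigenfunction, and Lemma~\ref{l:shift} to replace that eigenfunction by a half-wave sine. The genuine gap is in your final paragraph, and it is exactly the point you flag as the ``main obstacle'': identifying $\hat\beta^\ast_i$ with $\beta_i=\sqrt{2/\pi}\,\hat\beta_i$ requires knowing that $Y^i_{1,1}$ --- the first eigenfunction of the \emph{longest positivity arc of $u_i(1,\cdot)$ at radius exactly $1$} --- is close to $\sqrt{2/\pi}\sin(\theta+\theta_i)^+$, i.e.\ that $\theta_i(1)=\pi+O(\sqrt\e)$. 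Your proposed bootstrap cannot deliver this: $L^2(\pa B_1)$ closeness of $u_i(1,\cdot)$ to $\hat\beta^\ast_i\sin(\theta+\theta_i)^+$ gives no upper bound on $\theta_i(1)$ (the trace may be positive but tiny on an extra arc of definite length, e.g.\ if $\{u_i>0\}$ has a low-amplitude sliver hugging $\pa B_1$ over radii in $(1-\e,1]$, which costs only $O(\e)$ in $\int\delta_C$) and no lower bound either (the trace may vanish at an interior point of the hump, splitting the longest arc, without affecting the $L^2$ distance). Since $\delta_C$ is controlled only after integration in $r$, the single radius $r=1$ can be exceptional; and the disjointness constraint $\theta_1(1)+\theta_2(1)\le 2\pi$ converts a lower bound on one arc into an upper bound on the other, but the lower bound is equally unavailable.

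The paper sidesteps this by a different use of the pigeonhole: it selects the good radius $r^\ast$ within $\e/5$ of $1$ (not merely in $[3/4,1]$) and then \emph{rescales} so that $\log(J(1))'\le\e$, i.e.\ so that $r=1$ itself is a good radius. Then $\delta_A(1),\delta_C(1)\le\e$ hold directly, \eqref{eqn: deltaC est} at $r=1$ gives $|\pi-\theta_i(1)|\le C\sqrt\e$ and $|\sqrt{\lambda^i_{1,1}}-1|\le C\sqrt\e$ together with the gap $\lambda^i_{2,1}-\lambda^i_{1,1}\ge 1$, and the pairing defining $\hat\beta_i$ is taken with the eigenfunction of that (good) radius, so no transfer of structural information between radii is needed. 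To repair your argument, replace the pigeonholing window $[3/4,1]$ by $(1-\e/5,1]$ and rescale as in the paper (or, equivalently, state the conclusion with the eigenfunction at the good radius); as written, the third-paragraph identification of the coefficient does not go through.
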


\begin{proof} 
Let 
\[
\e =\log\left(\frac{J(1)}{J(\r)}\right) = \int_\r^1 \log(J(r))'\,dr\,.
\] 
We note that it is sufficient to prove the result for $\e$ small. Since the integrand is nonnegative, it follows from Chebyshev's inequality that $\log(J(r))' \leq 10\e$ outside a set of measure $\e/10$. In particular, $\log(J(r))' \leq 10 \e$ for some $r \in (1-  \e/5, 1]$.
 So, by scaling we may assume that 
 \begin{equation}\label{eqn: scaling j'(1)}
 \log(J(1))'\leq \epsilon.
 \end{equation}
 
  Applying the estimate  \eqref{eqn: deltaC est} for $\delta_C(r)$ with $r=1$, we deduce that there exists a universal constant $C$ such that 
 \begin{equation}\label{eqn: useful}
 |\pi - \theta_i(1)| \leq C\sqrt{\epsilon} \quad \text{ and } \quad |\sqrt{\lambda_{1,1}^i} - 1| \leq C\sqrt{\epsilon}.  \quad \lambda_2 - \lambda\geq c.
 \end{equation}
Consequently, the other connected arcs have length less than $C \sqrt{\epsilon}$, and it follows that $\lambda_{2,1}^i=4\lambda_{1,1}^i$, so that 
\begin{equation}\label{eqn: useful2} 
 \lambda_{2,1}^i - \lambda_{1,1}^i \geq 1. 
\end{equation}

Fix $i \in \{1,2\}$. For the remainder of the proof, we omit subscripts $i$ for notational simplicity. After a rotation, we assume that the longest arc on which $u\cap \pa B_1$ is positive is $(0,\theta(1))$. After this rotation, we let $v = r\sin(\theta)$. 
 Here and in the remainder of the proof, we let $\lambda = \lambda_{1,1},$ i.e. $\lambda$ is the first Dirichlet eigenvalue of $\Omega_1$. Likewise, $\lambda_2$ denotes the second Dirichlet eigenvalue of $\Omega_1$.
We now apply the triangle inequality to split the integral on the left-hand side into three pieces as follows:
  \begin{equation}\label{eqn: starting point}
  	\begin{split}
    \int_{B_1\setminus B_\r} (u - \beta v^+)^2    & \leq 4\int_{B_1\setminus B_\r} (u - ru(1,\theta))^2  \\
  &  +4 \int_{B_1\setminus B_\r} (r u(1,\theta) -r\hat{\beta} Y_{1,1})^2 \\
 &   +4\int_{B_1\setminus B_\r}  (\hat{\beta} r  Y_{1,1}- \beta r\sin(\theta)^+)^2 \,.
  	\end{split}
  \end{equation}
We apply Lemma~\ref{l:onehomogen} to the first term on the right-hand side of \eqref{eqn: starting point}  to find that for a universal constant $C$, we have 
 \begin{equation}
\label{eqn: application of one hom lemma} 	
4\int_{B_1\setminus B_\r} (u - ru(1,\theta))^2 \leq C \e \| u\|_{W^{1,2}(B_1)}^2\,.
 \end{equation} 

Next, we use the estimate \eqref{eqn: deltaA est} for $\delta_A(r)$ with $r =1$ to bound the second term on the right-hand side of \eqref{eqn: starting point}. Indeed, by \eqref{eqn: deltaA est} and the the uniform eigenvalue gap \eqref{eqn: useful}, we see that   
\begin{equation}\begin{split}
\label{eqn: second term}
4 \int_{B_1\setminus B_\r} (r u(1,\theta) - \hat{\beta} rY_{1,1})^2 & \leq C \int_{\pa B_1} (u(1,\theta) - \hat{\beta} Y_{1,1})^2\\
	& \leq C \log(J(1))' \|\nabla u \|_{L^2(B_1)}^2\leq C\e \| \nabla u \|_{L^2(B_1)}^2\,
\end{split}	
\end{equation}
 for a universal constant $C$. In the final inequality we have made use of  \eqref{eqn: scaling j'(1)}.

Finally, the third term in \eqref{eqn: starting point} can be estimated using the eigenvalue estimate in \eqref{eqn: useful} and the estimate \eqref{eqn: sin int 2} from Lemma~\ref{l:shift}. 
 Indeed, noting that $\hat{\beta} \leq \| u\|_{L^2(\partial B_1)} \leq  C\| u\|_{W^{1,2}(B_1)}$, we conclude that
 \begin{equation}\label{eqn: sin bound}
 \begin{split}
 4\int_{B_1\setminus B_\r}  (\hat{\beta} r Y_{1,1} -& \beta r\sin(\theta)^+)^2  \leq 
\hat{\beta}^2  \int_{\pa B_1}\left[ Y_{1,1} - \sqrt{\frac{2}{\pi}} \sin(\theta)^+\right]^2  \\
& \leq C\hat{\beta}^2\int_{\pa B_1}\left[ \sqrt{\frac{2}{\theta(1)}} \sin(\sqrt{\lambda} \theta)^+  - \sqrt{\frac{2}{\pi}}\sin(\theta)^+\right]^2\\ 
& \leq C\hat{\beta}^2\int_{\pa B_1}\left[  \sin(\sqrt{\lambda} \theta)^+  -\sin(\theta)^+\right]^2 \\
&\quad+ C\hat{\beta}^2 \left(\frac{1}{\sqrt{\theta(1)}} - \frac{1}{\sqrt{\pi}} \right)^2 \int_{\pa B_1}\left[  \sin(\sqrt{\lambda} \theta)^+\right]^2 \\ 
& \leq C\hat{\beta}^2 (1-\sqrt{\lambda})^2+ C\hat{\beta}^2 (\pi -\theta(1))^2
\leq C \e \| u \|_{W^{1,2}( B_1)}^2.
\end{split} 
 \end{equation}
By combining \eqref{eqn: starting point}, \eqref{eqn: application of one hom lemma}, \eqref{eqn: second term} and \eqref{eqn: sin bound}, we conclude the proof of the proposition.
\end{proof}

We are now ready to prove Theorem~\ref{t:stability} in the case $n=2$.
\begin{proof}[Proof of Theorem~\ref{t:stability} for $n=2$]
As noted in the beginning of the section, if $\log(J(1)/J(\rho))\geq 1$, then we may choose $\beta_i=\| u \|_{L^2(B_1)}$ and $\nu = e_1$, and the result immediately follows for large enough $C$. If 
$\log(J(1)/J(\rho))<1$, then we may choose for $i=1,2$, the constants $\beta_1^i$ and $\theta_i$ be as in Proposition~\ref{l:approximate2}. In order to conclude Theorem~\ref{t:stability},  the only issue is that the functions $\sin(\theta + \theta_1)^+$ and 
 $\sin(\theta+ \theta_2)^+$ may not have disjoint support. However, from \eqref{eqn: useful} we point out that the overlap may not exceed $C\sqrt{\epsilon}$. Therefore, by  Lemma \ref{l:shift}, 
 we may rotate both functions to have disjoint support and thus obtain the result. 
\end{proof}
\medskip

\subsection{The $n$-dimensional case}\label{sec: nd qacf}
We now move to the proof of Theorem~\ref{t:stability} in higher dimensions. Throughout this section, let $n\geq 3$, let $u_1,u_2$ be as in \eqref{eqn: harmonic functions} and let $J=J[u_1, u_2] : (0,1] \to \R$ be the $n$-dimensional Alt-Caffarelli-Friedman monotonicity formula defined in \eqref{eqn: ACF higher d}.
 The proof of Theorem~\ref{t:stability} follows the same basic scheme as the proof in the two dimensional setting, wherein we estimate the positivity of $\log(J(r))'$ on each sphere $\pa B_r$. Whereas in the two dimensional case the positivity sets $\pa B_r \cap\{u_i>0\}$ were unions of circular arcs, in higher dimensions the geometry of the positivity sets may be more complicated. In particular, we can no longer hope to write the first eigenfunction and eigenvalue in an explicit way. The key tool to overcome this difficulty is Theorem~\ref{thm: quantitative sperner} (specifically utilized in \eqref{e3} in Proposition \ref{l:approx}). In this section only, we will use the notation $\BB$ to denote an $(n-1)$-dimensional spherical cap (i.e. geodesic ball in $S^{n-1}$), so as not to be confused with the notation $B$ used for a ball in $n$-dimensional Euclidean space.

To begin the $n$-dimensional setting, let us recall the proof from \cite{acf84, CaffSalsa} that the ACF formula \eqref{eqn: ACF higher d} is monotone. We directly compute
\begin{equation}\label{eqn: derivative1}
	\log(J(r))' = \frac{r^{2-n}\int_{\pa B_r} |\na u_1|^2}{\int_{B_r} \frac{|\na u_1|^2}{|x|^{n-2}}}  +\frac{r^{2-n}\int_{\pa B_r} |\na u_2|^2}{\int_{B_r} \frac{|\na u_2|^2}{|x|^{n-2}}} -\frac{4}{r}.
\end{equation}
For each $r \in (0,1)$ and $i=1,2$, set $\Om_i^r = \{ u_i>0 \} \cap \pa B_r$ . Let $\lambda_{1,r}^i$ and $\tilde{u}_{i,r}$ respectively denote the first Dirichlet eigenvalue and eigenfunction $-\Delta_\tau$ on $\Om_i^r$, where $\tilde{u}_{i,r}$  is normalized so that $\| \tilde{u}_{i,r}\|_{L^2(\Omega_i^r)}=1$. Here $\Delta_\tau$ denotes the Laplacian of $\pa B_r$.
 For $n>2$, we may no longer write $\lambda_{1,r}^i$ explicitly as we did in \eqref{eqn: 2d first eval} in the two dimensional case. For a hemisphere in $\pa B_r$, the first eigenvalue is $\lambda_1=(n-1)/r^2$ with corresponding eigenfunction given by the restriction a linear function on $\R^n$ to this set. (By symmmetry, one can see that the first eigenfunction of a hemisphere is the restriction of the first spherical harmonic that vanishes on the boundary of the hemisphere, which up to a rotation is the second spherical harmonic.)
 
  An important quantity in this setting is the characteristic constant $\alpha=\alpha_i(r) \in (0, \infty)$ of $\Omega_i^r$ defined by
 \begin{equation}\label{eqn: characteristic constant}
 \alpha_i(r)^2 + (n-2)\alpha_i(r) - r^2\lambda_{1,r}^i=0,
 \end{equation}
 see  \cite{friedland1976eigenvalue}.
The characteristic constant is defined in such a way so that the $\alpha =\alpha_i(r)$ homogeneous extension  $w(x) =r^\alpha \tilde{u}_{i,r}$ of  $\tilde{u}_{i,r}$ is harmonic in the cone generated by $\Om_i^r.$ Like the first Dirichlet eigenvalue, the characteristic constant  is monotone decreasing under set inclusion, and among sets of a fixed volume is minimized by a spherical cap. A hemisphere in $\pa B_r$ has characteristic constant $\alpha =1$.
 
Since $u_i$ is subharmonic and $u_i(0)=0$ we have \cite{CaffSalsa} that 
\begin{equation} \label{e:cs}
 \int_{B_r} \frac{|\na u_i|^2}{|x|^{n-2}} \leq r^{2-n}\int_{\partial B_r} u_i (u_i)_r + \frac{n-2}{2}\frac{u_i^2}{r}. 
\end{equation}
 From \eqref{eqn: characteristic constant}, we see that we can express $\lambda_{1,r}^i$ as $\lambda_{1,r}^i = r^{-2}\alpha_i(r)^2 +(n-2)r^{-2} \alpha_i(r)$.
With this in mind, we now write $|\na u_i|^2$ in the following way for any point, where for a point $x \in \pa B_r$, we let $\na_\tau$ denote the projection of the gradient onto the tangent space of $\pa B_r$ at $x$.
\[
\begin{aligned}
|\na u_i|^2 =|\na_\tau u_i|^2 + (\partial_r u_i)^2  &= |\na_\tau u_i|^2 - \lambda_1^i u_i^2 + \lambda_1^i u_i^2 + (\partial_r u_i)^2 \\
 &=  |\na_\tau u_i|^2 - \lambda_1^i u^2 + \frac{\alpha_i^2}{r^2} u_i^2 + 2\frac{\alpha_i}{r} \frac{n-2}{2r} u_i^2 + (\partial_r u_i)^2 \\
 &=  |\na_\tau u_i|^2 - \lambda_1^i u^2  \\
 &\quad + \frac{\alpha_i^2}{r^2} u_i^2  - 2\frac{\alpha_i u_i}{r} \partial_r u_i + (\partial_r u_i)^2  \\
 &\quad +2\frac{\alpha_i}{r} u_i \partial_r u_i +2 \frac{\alpha_i}{r^2} \frac{n-2}{2} u_i^2.   \\
\end{aligned}
\]
Using this expression in the numerator of \eqref{eqn: derivative1}, as well as the fact that $|x|^{n-2} \leq r^{n-2}$ on $B_r$ and \eqref{e:cs}, we see  that $\log(J(r))' \geq \delta_A(r) + \delta_B(r)+ \delta_C(r)$ where 
\begin{align*}
	\delta_A(r) &= \sum_{i=1}^2 \ \frac{1}{\int_{B_r}| \na u_i|^2}\left( \int_{\pa B_r} (\na_\tau u_i)^2 -\lambda_1^i\int_{\pa B_r} u_i^2\right)
	\\
	\delta_B(r) & =\sum_{i=1}^2 \ \frac{1}{\int_{B_r}| \na u_i|^2}
	\left( \int_{\pa B_r}  \frac{\alpha_i(r)^2}{r^2}u_i^2 + (\pa_r u_i)^2 -2\frac{\alpha_i(r)}{r} u_i \pa_r u_i\right)
	\\
	\delta_C(r) & = \frac{2}{r}\left(\alpha_1(r) + \alpha_2(r)-2 \right)\,.
\end{align*}

In \cite{acf84, CaffSalsa}, the ACF functional $J(r)$ is shown to be monotone by showing that each of $\delta_A(r), \delta_B(r)$, and $\delta_C(r)$ is nonnegative, which we will see is a consequence of Lemma~\ref{l:33estimate} below. As in the two dimensional case, the quantitative monotonicity formula of  Theorem~\ref{t:stability} will be established by quantifying the loss in each inequality $\delta_{(\cdot)}(r) \geq 0$ and integrating with respect to $r$. Let us discuss the information captured by each $\delta_{(\cdot)}(r)$.

The quantity $\delta_A(r)$ is the direct analogue of $\delta_A(r)$ in the two dimensional setting. It is nonnegative by the Poincar\'e inequality on $\Omega_i^r$, and equals zero for a given $r \in (0,1]$ if and only if $u_i$ restricted to $\pa B_r$ is a multiple of the first eigenfunction $\tilde{u}_{i,r}$ of $\Omega_i^r$. As in the case $n=2$, $\delta_A(r)$ quantitatively controls the distance of $u_i$ to the eigenfunction on this set; see \eqref{eqn: deltaA est n} in Lemma~\ref{l:33estimate} below. 

The term in parentheses in the definition of $\delta_B(r)$ can be realized as a square as in \eqref{eqn: delta b est n dim} in Lemma~\ref{l:33estimate} below, and so $\delta_B(r)$ is nonnegative for all $r \in (0,1)$. It quantitatively controls how closely the restriction of $u_i$ to $\pa B_r$ behaves like a homogeneous function with homogeneity $\alpha_i(r)$. 

The nonnegativity of $\delta_C(r)$ comes from the Friedland-Hayman inequality \cite{friedland1976eigenvalue}.
In \eqref{eqn: deltaC esta} of Lemma~\ref{l:33estimate} below, we show that $\delta_C(r)$ provides quantitative control over how close the characteristic constant $\alpha_i(r)$ of  $\Omega_i^r$ is to the characteristic constant $\alpha=1$ of a hemisphere in  $\pa B_r$.

Let us outline the ingredients of the proof of Theorem~\ref{t:stability} that make up the rest of this section. By integrating $\delta_B(r)$ and $\delta_C(r)$ from $\r$ to $1$, we show in Lemma~\ref{l:onehomogen n} that each $u_i$ is quantitatively close to the one-homogeneous function defined by $r u_i(1,\theta)$ in polar coordinates $(r ,\theta ) \in \R_+ \times S^{n-1}$ for $\R^n$.  Lemmas~\ref{l:fh} and Lemma~\ref{l:eigengap} provide  quantitative and qualitative information about the proximity of $\Omega_i^1$ to a spherical cap when $\delta_C(1)$ is assumed to be small. Lemma~\ref{lem: caps} compares the eigenfunctions of spherical caps of different radii. In Proposition~\ref{l:approx}, we apply the estimate \eqref{eqn: deltaA est n} for $\delta_A(1)$ to deduce that $ u_i(1,\theta)$ is almost equal to a multiple of first eigenfunction $\tilde{u}_{1,1}^i$ of $\Omega_i^1$, and applying Lemma~\ref{lem: caps} and Theorem~\ref{thm: quantitative sperner}, we see that $\tilde{u}_{1,1}^i$ is quantitatively close to the first eigenfunction of a hemisphere. Finally, Theorem~\ref{t:stability} follows from Proposition~\ref{l:approx} once we guarantee that the hemispheres corresponding to $u_1$ and $u_2$ do not have too much overlap.  \\

\smallskip
 The following lemma is the higher dimensional analogue of Lemma~\ref{l:3estimate} and contains the  quantitative lower bounds for each of $\delta_A(r), \delta_B(r)$, and $\delta_C(r)$ discussed above. Since each of the right-hand sides of \eqref{eqn: deltaA est n}-\eqref{eqn: deltaC esta} are nonnegative, Lemmma~\ref{l:33estimate} in particular implies the monotonicity of $J(r)$. 
\begin{lemma}  \label{l:33estimate}
	For each $r\in (0,1]$ and $i=1,2$, let $\tilde{u}_{i,r}$ be the eigenfunction of $\Omega_i^r$ as  above and let $\beta_{1,r}^i = \int_{\pa B_r } u_i \tilde{u}_{i,r}$. We have
	\begin{align}
\label{eqn: deltaA est n} 		\delta_A(r) & \geq \sum_{i=1}^2 \ \frac{(\lambda_{2,r}^i -\lambda_{1,r}^i ) }{\int_{B_r}| \na u_i|^2} \int_{\pa B_r} \left(u_i-\beta_{1,r}^i \tilde{u}_{i,r} \right)^2\\
\label{eqn: delta b est n dim}		\delta_B(r) & = \sum_{i=1}^2 \ \frac{1}{\int_{B_r}| \na u_i|^2} \int_{\pa B_r} \left( \frac{\alpha_i(r)}{r} u_i - \pa_r u_i\right)^2 \\
\label{eqn: deltaC esta}	\delta_C(r) &\geq 
\sum_{i=1}^2  c(n)\frac{|\alpha_i(r) -1|^2}{r  \max\{\alpha_i(r),2\}},
	\end{align}
	with $c(n)$ a dimensional constant.  
\end{lemma}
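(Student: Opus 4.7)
The three inequalities decouple, and my plan is to treat \eqref{eqn: deltaA est n} and \eqref{eqn: delta b est n dim} by direct calculations modeled on Lemma~\ref{l:3estimate}, and to view \eqref{eqn: deltaC esta} as the quantitative form of the Friedland-Hayman inequality underlying $\delta_C(r) \geq 0$.

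For \eqref{eqn: deltaA est n}, I would expand the restriction $u_i|_{\partial B_r}$ in the $L^2$-orthonormal basis $\{\tilde u_{k,r}^i\}_{k\geq 1}$ of Dirichlet eigenfunctions of $-\Delta_\tau$ on $\Omega_i^r$ and set $\beta_{k,r}^i = \int_{\partial B_r} u_i \tilde u_{k,r}^i$. Since $u_i$ vanishes on $\partial\Omega_i^r$ by continuity, $u_i|_{\partial B_r} \in H^1_0(\Omega_i^r)$, and Parseval's identity yields
\[
\int_{\partial B_r}|\nabla_\tau u_i|^2 - \lambda_{1,r}^i \int_{\partial B_r} u_i^2 = \sum_{k\geq 1}(\lambda_{k,r}^i - \lambda_{1,r}^i)(\beta_{k,r}^i)^2 \geq (\lambda_{2,r}^i - \lambda_{1,r}^i)\int_{\partial B_r}(u_i - \beta_{1,r}^i \tilde u_{i,r})^2.
\]
Dividing by $\int_{B_r}|\nabla u_i|^2$ and summing in $i$ yields \eqref{eqn: deltaA est n}. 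For \eqref{eqn: delta b est n dim}, the algebraic identity
\[
\left(\frac{\alpha_i(r)}{r} u_i - \partial_r u_i\right)^2 = \frac{\alpha_i(r)^2}{r^2} u_i^2 - 2\frac{\alpha_i(r)}{r} u_i \partial_r u_i + (\partial_r u_i)^2
\]
reproduces exactly the integrand defining $\delta_B(r)$, giving the claimed equality at no loss.

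The main obstacle is \eqref{eqn: deltaC esta}, which I would prove by reducing to spherical caps and then applying a quantitative Friedland-Hayman inequality. Let $v_i = |\Omega_i^r|/|\partial B_r|$; disjointness of $\Omega_1^r$ and $\Omega_2^r$ forces $v_1+v_2 \leq 1$. Sperner's inequality on $\partial B_r$ (a rescaled case of Theorem~\ref{thm: quantitative FK general}) yields $r^2 \lambda_{1,r}^i \geq \mu(v_i)$, where $\mu(v)$ is the rescaled first Dirichlet eigenvalue of a spherical cap of fraction $v$. Monotonicity of the map $\alpha \mapsto \alpha^2+(n-2)\alpha$ in \eqref{eqn: characteristic constant} upgrades this to $\alpha_i(r) \geq f(v_i)$, where $f$ is the $r$-independent characteristic constant of a spherical cap of fraction $v$. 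The function $f$ is strictly decreasing, strictly convex on $(0,1)$, satisfies $f(1/2)=1$, and the classical Friedland-Hayman inequality \cite{friedland1976eigenvalue} gives $f(v_1)+f(v_2) \geq 2$ whenever $v_1+v_2 \leq 1$. A Taylor expansion of $f$ at $v=1/2$, using uniform lower bounds on $|f'(1/2)|$ and on $f''$ near $1/2$, yields the quantitative Friedland-Hayman estimate $f(v_1)+f(v_2)-2 \geq c(n)\bigl((f(v_1)-1)^2+(f(v_2)-1)^2\bigr)$ in the bounded regime $\alpha_1, \alpha_2 \leq 3$. A short case analysis then gives \eqref{eqn: deltaC esta}: in this bounded regime, one uses the decomposition $\alpha_1+\alpha_2-2 = [f(v_1)+f(v_2)-2] + \sum_i[\alpha_i - f(v_i)]$, together with $(\alpha_i-f(v_i))^2 \leq 3(\alpha_i-f(v_i))$ (since $\alpha_i \leq 3$), to pass from $(f(v_i)-1)^2$ to $(\alpha_i-1)^2$, with $\max\{\alpha_i,2\}\leq 3$ absorbed into the constant. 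In the unbounded regime (say $\alpha_1 \geq 3$), the elementary inequality $(\alpha-1)^2/\alpha \leq \alpha-1$ for $\alpha\geq 1$ combined with $\alpha_1+\alpha_2-2 \geq \alpha_1/3$ handles the $\alpha_1$ contribution, while the $\alpha_2$ contribution is either bounded (if $\alpha_2 \leq 3$) or treated symmetrically.

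The hard part is the quantitative Friedland-Hayman step, in particular establishing quantitative positivity of $|f'(1/2)|$ and $f''$ near $v = 1/2$. This can be approached either through the hypergeometric ODE governing first eigenfunctions on spherical caps or by a direct second-variation computation at the hemisphere, and is where the dimensional constant $c(n)$ in \eqref{eqn: deltaC esta} is ultimately determined.
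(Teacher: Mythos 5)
Your treatment of \eqref{eqn: deltaA est n} and \eqref{eqn: delta b est n dim} is exactly the paper's: the eigenbasis expansion on $\Omega_i^r$ and the completion of the square, respectively, so nothing to add there. For \eqref{eqn: deltaC esta} your route is the same in substance but organized a bit differently. The paper first observes that, with $\bar\alpha_i$ the characteristic constants of disjoint caps of the same measure as $\Omega_i^r$, Sperner's inequality plus Friedland--Hayman give the \emph{linear} control $\delta_C(r)\geq \tfrac1r\big(|\alpha_1-\bar\alpha_1|+|\alpha_2-\bar\alpha_2|\big)$, then uses monotonicity of the characteristic constant under inclusion to reduce to \emph{complementary} caps, and finally bounds the symmetric sum $\hat\delta(h)=\hat\alpha(1/2+h)+\hat\alpha(1/2-h)-2\geq ch^2$ by a Taylor expansion, citing the known facts that $\hat\alpha$ is convex with $\hat\alpha''(1/2)>0$ (\cite{CaffSalsa, friedland1976eigenvalue, KochNYMJ}); the triangle inequality $|\alpha_i-1|^2\leq 2|\alpha_i-\bar\alpha_i|^2+2|\bar\alpha_i-1|^2$ then closes the bounded regime, exactly as in your decomposition $\alpha_1+\alpha_2-2=[f(v_1)+f(v_2)-2]+\sum_i[\alpha_i-f(v_i)]$. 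Your explicit handling of the regime $\alpha_1\geq 3$ via $(\alpha-1)^2/\alpha\leq\alpha-1$ and $\alpha_1+\alpha_2-2\geq\alpha_1/3$ is correct and in fact more careful than the paper's remark that the large-deficit case is straightforward.

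Two caveats on the step you yourself flag as the hard part. First, the lower bound on $|f'(1/2)|$ is not needed: in your Taylor identity the cross term is $f'(1/2)(v_1+v_2-1)\geq 0$ by the sign of $f'$ alone, so only quantitative positivity of the second derivative near the hemisphere matters. Second, be careful about the variable in which convexity is asserted: what is available in the literature (and what the paper invokes) is convexity of the cap characteristic constant in the radius-type parametrization, with positive second derivative at the hemisphere; strict convexity of $f$ as a function of the \emph{volume fraction} on all of $(0,1)$ is a stronger, unneeded claim, and convexity is not preserved under reparametrization in general. The clean repair, which matches the paper, is to first replace $(v_1,v_2)$ by the complementary pair $(v_1,1-v_1)$ using monotonicity of $f$, after which only the symmetric-sum bound at the hemisphere is needed, and that bound is insensitive to the choice of parametrization because complementary caps correspond to symmetric parameter values in either variable. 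With that adjustment, and with the convexity input supplied by citation (as the paper does) or by the second-variation/hypergeometric computation you sketch, your proof is complete.
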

\begin{proof}
The proof of \eqref{eqn: deltaA est n} is exactly the same as that of \eqref{eqn: deltaA est} of Lemma~\ref{l:3estimate}, and as in \eqref{eqn: deltaB est}, we recognize the integrand in $\delta_B(r)$ above as a square to see \eqref{eqn: delta b est n dim}.

Observe that the inequality in \eqref{eqn: deltaC esta} is straightforward when $\delta_C(r)$ is large. Therefore, it suffices to consider the case when $\delta_C(r)$ is small. It also suffices to consider the case when $\Omega_1^r$ and $\Om_2^r$ are both spherical caps. Indeed, let $\bar\alpha_1$ and $\bar \alpha_2$ denote the characteristic constants of disjoint spherical caps of the same volume as   $\Omega_1^r$ and $\Om_2^r$ respectively. Then by Sperner's inequality (recall \eqref{eqn: characteristic constant}), the corresponding $\delta_C(r)$ can only decrease for by replacing $\Omega_1^r$ and $\Om_2^r$ by spherical caps, and also using the Friedland-Hayman inequality, we see
 \begin{equation}\label{eqn: linear est}
 \delta_C(r) =\frac{1}{r} \left( |\alpha_1-\bar \alpha_1| + |\alpha_2 - \bar \alpha_2| + \bar \alpha_1 + \bar \alpha_2 -2\right) \geq \frac{1}{r}\left( |\alpha_1-\bar \alpha_1| + |\alpha_2 - \bar \alpha_2|\right).
 \end{equation}
That is, the deficit $\delta_C(r)$ controls $ |\alpha_1-\bar \alpha_1|$ {\it linearly}. Moreover, because the characteristic constant is monotone with respect to set inclusion, we may assume without loss of generality that  we have two complementary caps.

So, define the function $\hat{\alpha}:(0, 1)\to \R$  by letting $\hat{\alpha}(t)$ be the characteristic constant of the spherical cap of radius $2\pi t$. Let $\hat{\delta}(h)= \hat{\alpha}(1/2 +h) + \hat{\alpha}(1/2 -h) -2$, so that by the Friedland-Hayman inequality we have $\hat{\delta}(h) \geq 0$. 

We claim that  $\hat{\delta}(h)\geq c h^2$ for a universal constant $c$.
It is known \cite{CaffSalsa, friedland1976eigenvalue, KochNYMJ} that $\hat{\alpha}$ is convex with $\hat{\alpha}''(1/2) = 2c>0$, where $c$ is a universal constant, and therefore there exists $h_0$ such that  $\hat{\alpha}''(1/2 +h )\geq c$ for all $h$ with $|h|\leq h_0$, so that $\hat{\alpha}$ is \textit{strictly} convex in a neighborhood of $1/2$.  
  For the moment, we assume that $|h|\leq h_0$. 
So, a Taylor expansion shows that 
\begin{align*}
	\hat{\alpha}(1/2+ h) &\geq \hat{\alpha}(1/2)  + \hat{\alpha}'(1/2) h + \frac{c}{2}h^2 
\end{align*}
for any $h$ with $|h|\leq h_0$.
Replacing $h$ by $-h$ and summing the corresponding terms, we conclude that $\hat{\delta}(h)\geq c h^2$ for all $|h|\leq h_0$. By the convexity of $\hat{\alpha}$, it follows that 
$\hat{\delta}(h_1)\geq \hat{\delta}(h_0)$ for any $h_1 \geq h_0$. Therefore, by assuming that $\delta_C$ is small enough, we also have necessarily that $|h| \leq |h_0|$. 

Finally, since $\hat{\alpha} $ is a locally Lipschitz function, we have $|\hat{\alpha}(1/2 \pm h)-1|\leq C|h|$ from which it follows $|\overline{\alpha}_i(r) -1| \leq h$. Then 
\[
\begin{aligned}
|\alpha_i(r) -1|^2 &\leq 2|\alpha_i(r) - \overline{\alpha}_i(r)|^2 + 2|\overline{\alpha}_i -1|^2  \\
 &\leq 2(\alpha_1+\alpha_2-2)^2 + C(\alpha_1+\alpha_2-2) \\
 &\leq C(\alpha_1+\alpha_2-2),
\end{aligned}
\] 
with the last inequality following since $\alpha_1+\alpha_2-2$ is assumed to be small. By dividing by $r$ we  arrive at  \eqref{eqn: deltaC esta}. This concludes the proof.
\end{proof}  

The following lemma is the higher dimensional analogue of Lemma~\ref{l:onehomogen}. Using the estimates for $\delta_B(r)$ and $\delta_C(r)$ in Lemma~\ref{l:33estimate}, this lemma shows that $u_1$ and $u_2$ are quantitatively close to being $1$-homogeneous.
\begin{lemma}   \label{l:onehomogen n}
Let $u_1,u_2$ be as above. Then
 \[
  \int_{B_1 \setminus B_{\rho}} [ru_i(1,\theta)-u_i(r,\theta)]^2 
   \leq C\log\left(\frac{J(1)}{J(\rho)}\right) \| u \|_{W^{1,2}(B_1)}^2. 
 \]
\end{lemma}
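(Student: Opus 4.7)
The plan is to adapt the argument of Lemma~\ref{l:onehomogen} essentially verbatim, with the correct choice of normalizing factor $A(r)$ adapted to the higher-dimensional geometry. In 2D, $\delta_B$ measures the failure of $u$ to be a $\sqrt{\lambda}$-homogeneous function along $\pa B_r$, while in higher dimensions, \eqref{eqn: delta b est n dim} instead controls the failure of $u$ to be $(\alpha(r)/r)$-homogeneous. Motivated by this, I would fix $i$, drop the subscript $i$, and define
\[
A(r) = \exp\Bigl(-\int_r^1 \alpha(s)/s \; ds\Bigr),
\]
so that $A(1)=1$ and $A'(r)/A(r) = \alpha(r)/r$. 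Since $u(1,\theta) = A(1) u(1,\theta)$, differentiating $u(s,\theta)/A(s)$ in $s$ and using $A(r)/A(s) \leq 1$ for $r \leq s$, one obtains (just as in \eqref{e:sq1})
\[
|A(r)u(1,\theta) - u(r,\theta)|^2 \leq \int_r^1 \bigl|u_r(s,\theta) - (\alpha(s)/s)u(s,\theta)\bigr|^2 ds.
\]

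With the splitting $\int_{B_1 \setminus B_\rho}(ru(1,\theta)-u(r,\theta))^2 \leq 2(I)+2(II)$, where $(I)$ involves $(r-A(r))^2 u(1,\theta)^2$ and $(II)$ involves $(A(r)u(1,\theta)-u(r,\theta))^2$, I handle $(II)$ exactly as in the 2D proof: use the display above, swap the $dr$ and $ds$ integrals using $r \leq s$ to replace the factor $r^{n-1}$ (from the polar decomposition $\int_{B_1\setminus B_\rho} = \int_\rho^1 r^{n-1}\int_{\pa B_1}\,d\theta\,dr$) by $s^{n-1}$, recognize the resulting $s^{n-1}\int_{\pa B_1}(\pa_r u - (\alpha(s)/s)u)^2 d\theta$ as a boundary integral on $\pa B_s$, and apply \eqref{eqn: delta b est n dim} to bound it by $\delta_B(s)\int_{B_s}|\na u|^2$. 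This yields $(II) \leq \bigl(\int_\rho^1 \delta_B(s)ds\bigr) \int_{B_1}|\na u|^2$.

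For $(I)$ I need $|A(r)-r|^2 \lesssim \log(J(1)/J(\rho))$ up to a logarithmically growing prefactor. Writing $r = \exp(-\int_r^1 1/s\,ds)$ and using that $e^{-x}$ is $1$-Lipschitz for $x\geq 0$, we get
\[
|A(r)-r| \leq \int_r^1 \frac{|\alpha(s)-1|}{s}\,ds
\leq \Bigl(\int_r^1 \frac{(\alpha(s)-1)^2}{s\max\{\alpha(s),2\}}ds\Bigr)^{1/2}\Bigl(\int_r^1 \frac{\max\{\alpha(s),2\}}{s}ds\Bigr)^{1/2}
\]
by Cauchy--Schwarz. The first factor is controlled by $\int_r^1 \delta_C(s)\,ds$ via \eqref{eqn: deltaC esta}. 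For the second factor, the simple case analysis $\max\{\alpha,2\}/s \leq 4\delta_C(s) + 2/s$ (when $\alpha \geq 2$ one has $(\alpha-1)^2/\alpha \geq \alpha/4$, so \eqref{eqn: deltaC esta} forces $\alpha/s \lesssim \delta_C(s)$) gives $\int_r^1 \max\{\alpha,2\}/s\,ds \leq C + C|\ln r|$, so $|A(r)-r|^2 \leq C\log(J(1)/J(\rho))(1+|\ln r|)$.

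Plugging this into $(I) = \int_{\pa B_1} u(1,\theta)^2\,d\theta \cdot \int_\rho^1 r^{n-1}(r-A(r))^2 dr$, bounding the angular integral by $\|u\|_{W^{1,2}(B_1)}^2$ via the trace inequality, and using that $r^{n-1}(1+|\ln r|)$ is integrable on $(0,1)$ gives $(I) \leq C\log(J(1)/J(\rho)) \|u\|_{W^{1,2}(B_1)}^2$. Adding $(I)$ and $(II)$ and invoking $\log(J(r))' \geq \delta_A + \delta_B + \delta_C$ to bound $\int_\rho^1 \delta_B(s)\,ds \leq \log(J(1)/J(\rho))$ concludes the proof. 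The only step that requires genuinely new input beyond the 2D argument is the Cauchy--Schwarz split for $|A(r)-r|$, which must be arranged so that \eqref{eqn: deltaC esta} (which degenerates as $\alpha \to \infty$) is applied with the correct weight; this is the only subtle point in what is otherwise a direct translation of the 2D argument.
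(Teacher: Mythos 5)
Your proposal is correct and coincides with the paper's approach: the paper's own proof of Lemma~\ref{l:onehomogen n} is a one-line reduction to the argument of Lemma~\ref{l:onehomogen} with exactly your choice $A(r)=e^{-\int_r^1 \alpha(t)/t\,dt}$, using Lemma~\ref{l:33estimate} in place of Lemma~\ref{l:3estimate}. The details you supply (the weighted Cauchy--Schwarz split with weight $\max\{\alpha,2\}/s$ so that \eqref{eqn: deltaC esta} applies, the case analysis giving $\max\{\alpha,2\}/s\lesssim \delta_C(s)+2/s$, and the replacement of $r^{n-1}$ by $s^{n-1}$ when swapping the integrals) are precisely the routine adaptations the authors leave implicit.
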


\begin{proof}Fix $i \in \{1,2\}$. For notational simplicity, we omit the sub and superscripts $i$ in this proof.
The proof follows the proof of Lemma \ref{l:onehomogen} using Lemma \ref{l:33estimate} but with $A(r)$ now defined as $ A(r):=e^{-\int_r^1 \frac{\alpha(t)}{t} \ dt}.$
\end{proof}

In Lemma~\ref{l:33estimate}, we established a quantitative lower bound for $\delta_C(r)$ in term of the characteristic constants of the regions $\Omega^r_1$ and $\Omega_2^r$. In the following lemma, we prove some further quantitative estimates for $\delta_C(1)$ under the assumption that $\delta_C(1)$ is bounded above by one, which will be applied in the proof of Proposition~\ref{l:approximate2}.   Recall that we let $\BB$ denote an $(n-1)$-dimensional spherical cap. 
\begin{lemma}  \label{l:fh}
Assume that $\delta_C(1) \leq 1$ and fix $i=1,2$. Let $\bar  \BB_i \subset \pa B_1$ be a spherical cap such that $|\bar \BB_i |  = |\Omega_i^1|$, and let $\bar \lambda^i$ denote the first eigenvalue of $\bar \BB_i$. 
 There exists a constant $C(n)$  such that
 \begin{align}\label{eqn: char const  quant}
  \delta_C(1) &\geq  
  C\left( \left|\lambda^i_{1,1} - \bar \lambda^{i}\right| +\left| \bar \lambda^i - (n-1)\right|^2\right)\,.
 \end{align}
\end{lemma}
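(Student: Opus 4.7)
The plan is to prove the two bounds in \eqref{eqn: char const  quant} separately by leveraging ingredients already developed in the proof of Lemma~\ref{l:33estimate}. Sperner's inequality for characteristic constants gives $\alpha_i(1) \geq \bar\alpha_i$, so we may decompose
\[
\delta_C(1) = 2\bigl((\alpha_1(1) - \bar\alpha_1) + (\alpha_2(1) - \bar\alpha_2)\bigr) + 2(\bar\alpha_1 + \bar\alpha_2 - 2)
\]
as a sum of three nonnegative quantities (the last by Friedland--Hayman). The first two summands will control $|\lambda_{1,1}^i - \bar\lambda^i|$ linearly, while the third controls $|\bar\lambda^i - (n-1)|^2$.

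For the linear term, the defining relation \eqref{eqn: characteristic constant} at $r=1$ yields
\[
\lambda_{1,1}^i - \bar\lambda^i = (\alpha_i(1) - \bar\alpha_i)(\alpha_i(1) + \bar\alpha_i + n - 2).
\]
Since $\delta_C(1) \leq 1$ forces $\alpha_1(1) + \alpha_2(1) \leq 3$, both $\alpha_i(1)$ and $\bar\alpha_i \leq \alpha_i(1)$ are bounded by a dimensional constant, so the second factor is bounded by some $C(n)$. The decomposition above bounds the first factor by $\delta_C(1)/2$, and the linear inequality follows.

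For the quadratic term, I would parametrize the caps by $t_i = |\bar\BB_i|/|S^{n-1}|$ and set $h_i = t_i - 1/2 \in [-1/2, 1/2]$. Since $\hat\alpha$ is monotone decreasing and $t_1 + t_2 \leq 1$, we have $\hat\alpha(t_2) \geq \hat\alpha(1 - t_1)$, so
\[
\bar\alpha_1 + \bar\alpha_2 - 2 \geq \hat\alpha(t_1) + \hat\alpha(1 - t_1) - 2 = \hat\delta(h_1).
\]
The Taylor argument carried out in the proof of Lemma~\ref{l:33estimate} shows $\hat\delta(h_1) \geq c h_1^2$ for $|h_1| \leq h_0$; convexity of $\hat\delta$ (inherited from that of $\hat\alpha$) and the a priori bound $|h_1| \leq 1/2$ let us extend this, after shrinking $c$, to the full range $[-1/2, 1/2]$. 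Running the symmetric argument yields the same bound with $h_2$, so $\delta_C(1) \geq c \max(h_1^2, h_2^2)$. Finally, $\bar\lambda^i = \hat\alpha(t_i)^2 + (n-2)\hat\alpha(t_i)$ is Lipschitz in $h_i$ and equals $n-1$ at $h_i = 0$, giving $|\bar\lambda^i - (n-1)|^2 \leq C h_i^2 \leq C' \delta_C(1)$, as required.

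The only genuine obstacle is the mild one of propagating the quadratic bound on $\hat\delta$ from a neighborhood of zero to all of $[-1/2,1/2]$. Convexity of $\hat\delta$ handles this cleanly: $\hat\delta(h) \geq \hat\delta(h_0) \geq c h_0^2$ for $|h| \geq h_0$, combined with $h^2 \leq 1/4$, yields a uniform quadratic lower bound over the full range. No further restriction on the smallness of $\delta_C(1)$ is needed beyond the standing hypothesis $\delta_C(1) \leq 1$.
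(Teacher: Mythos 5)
Your proof is correct and follows essentially the same route as the paper's: your decomposition of $\delta_C(1)$ is exactly \eqref{eqn: linear est}, the linear bound on $|\lambda_{1,1}^i-\bar\lambda^i|$ is the same algebra with \eqref{eqn: characteristic constant} plus the bound $\alpha_i,\bar\alpha_i\le C(n)$ coming from $\delta_C(1)\le 1$, and the quadratic bound rests on the same $\hat\delta(h)\ge c h^2$ analysis from the proof of Lemma~\ref{l:33estimate}. The only minor divergence is that you use convexity/monotonicity of $\hat\delta$ to extend the quadratic lower bound to all $|h|\le 1/2$, whereas the paper handles the regime where $\delta_C$ is not small by a trivial dichotomy; both work.
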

\begin{proof} We fix $i$ and suppress the dependence on $i$ and $r=1$ in the remainder of the proof.
Let $\bar \alpha$  denote the characteristic constant of $\bar \BB$. Note that if $\delta_C(1) \leq 1$ then by definition, $\alpha, \bar \alpha \leq 3$. We already showed in  \eqref{eqn: linear est} in the proof of Lemma~\ref{l:33estimate} that $|\alpha - \bar \alpha|$ is linearly controlled by $\delta_C(1)$. This  combined with 
  \eqref{eqn: characteristic constant} shows the first estimate in \eqref{eqn: char const  quant}:
 \begin{equation}\label{eqn: linear est eval}
 	 |\lambda - \bar \lambda|  \leq (n-2) |\alpha-\bar \alpha| + |\alpha^2 -\bar \alpha^2| \leq C  |\alpha-\bar \alpha| \leq C\delta_C(1).
 \end{equation}
Similarly, it follows from the proof of Lemma~\ref{l:33estimate} that $|\bar\alpha-1|$ is quadradically controlled by $\delta_C(1)$, and using  \eqref{eqn: characteristic constant} in the same way, we conclude the second estimate in \eqref{eqn: char const  quant} holds.
\end{proof}

The next lemma shows that for $\delta_C(1)$ sufficiently small, the sets $\Omega_1^1$ and $\Omega_2^1$ enjoy a uniform spectral gap.
\begin{lemma}   \label{l:eigengap} Fix $\eta>0$. There exists $\e_0 = \e_0(\eta, n)$ such that if $\delta_C(1) \leq \e_0$, 
 then $\lambda_2(\Omega_i^1) - \lambda_1(\Omega_i^1) \geq \eta$ for $i=1,2$. 
\end{lemma}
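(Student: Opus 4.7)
The plan is to argue by contradiction using compactness, reducing to the exact spectral gap of the Dirichlet Laplacian on a hemisphere in $S^{n-1}$. Recall that on such a hemisphere $\BB_\infty$, the first Dirichlet eigenvalue is $n-1$ (the unique odd degree-one spherical harmonic being $x_n$) and the second is $2n$, with eigenspace spanned by the odd degree-two harmonics $x_i x_n$ for $i<n$. Thus the limiting gap is $n+1$, so the lemma is only informative in the range $\eta<n+1$, in which I work. Suppose for contradiction that $\Omega_k\subset S^{n-1}$ satisfy $\delta_C(1)[\Omega_k]\to 0$ yet $\lambda_2(\Omega_k)-\lambda_1(\Omega_k)<\eta$.

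First I would upgrade this to strong convergence of sets and first eigenfunctions to a hemisphere. By Lemma \ref{l:fh}, both $\lambda_1(\Omega_k)$ and the first eigenvalue $\bar\lambda_k$ of the equivolumetric cap $\bar\BB_k$ tend to $n-1$; strict monotonicity of cap eigenvalues in volume then forces $|\bar\BB_k|\to|S^{n-1}|/2$, so after rotating $\Omega_k$ and passing to a subsequence, $\bar\BB_k$ converges in Hausdorff distance to a fixed hemisphere $\BB_\infty$. Applying Theorem \ref{thm: quantitative sperner} on $S^{n-1}$ to the deficit $\lambda_1(\Omega_k)-\bar\lambda_k = o(1)$ yields $|\Omega_k\Delta\bar\BB_k|\to 0$ and $\|u_{\Omega_k}-u_{\bar\BB_k}\|_{L^2(S^{n-1})}\to 0$, whence $\Omega_k\to\BB_\infty$ in $L^1$ and $u_{\Omega_k}\to u_{\BB_\infty}$ in $L^2$.

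Next, let $\psi_k\in H^1_0(\Omega_k)$ denote the $L^2$-normalized second Dirichlet eigenfunction, chosen orthogonal to $u_{\Omega_k}$ and extended by zero to $S^{n-1}$. The bound $\int|\nabla_\tau\psi_k|^2=\lambda_2(\Omega_k)\leq n-1+\eta+o(1)$ gives $H^1(S^{n-1})$-boundedness, so (up to a subsequence) $\psi_k\rightharpoonup\psi_\infty$ weakly in $H^1$ and strongly in $L^2$. The limit has unit $L^2$-norm, satisfies $\int\psi_\infty u_{\BB_\infty}=0$ by passing the orthogonality to the limit, and vanishes a.e. outside $\BB_\infty$: indeed, for any $\phi\in C^\infty_c(S^{n-1}\setminus\overline{\BB_\infty})$ one has $|\int\psi_k\phi| \leq \|\phi\|_{L^\infty}|\Omega_k\cap\operatorname{supp}\phi|^{1/2}\to 0$ by $|\Omega_k\Delta\BB_\infty|\to 0$. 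Smoothness of $\partial\BB_\infty$ then gives $\psi_\infty\in H^1_0(\BB_\infty)$, and lower semicontinuity of the Dirichlet energy produces
\[
2n = \lambda_2(\BB_\infty) \leq \int_{S^{n-1}}|\nabla_\tau\psi_\infty|^2 \leq \liminf_{k\to\infty}\lambda_2(\Omega_k),
\]
which combined with $\lambda_1(\Omega_k)\to n-1$ contradicts $\lambda_2(\Omega_k)-\lambda_1(\Omega_k)<\eta<n+1$.

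The step I expect to require the most care is verifying that $\psi_\infty$ actually lies in $H^1_0(\BB_\infty)$ with unit $L^2$-norm, so that it is admissible for $\lambda_2(\BB_\infty)$. Here the essential input is the $L^1$ convergence of the \emph{sets} delivered by Theorem \ref{thm: quantitative sperner}, not merely the convergence of first eigenvalues: without control on $\Omega_k\Delta\BB_\infty$, the mass of $\psi_k$ could a priori concentrate on thin filaments of $\Omega_k$ lying outside $\BB_\infty$ while keeping the Dirichlet energy bounded, which is precisely the failure mode ruled out by the quantitative Sperner inequality.
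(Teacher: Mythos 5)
Your proposal is correct, and it follows the same broad template as the paper (contradiction plus compactness, with Lemma \ref{l:fh} forcing $\lambda_1(\Omega_k)\to n-1$ and the volumes to that of a hemisphere), but the decisive step is genuinely different. The paper never invokes the quantitative Theorem \ref{thm: quantitative sperner} in this lemma: it extracts weak $W^{1,2}$ / strong $L^2$ limits of \emph{both} the first and second eigenfunctions of $\Omega_k$, uses Fatou plus the rigidity (equality case) of the qualitative Sperner inequality and monotonicity of $\lambda_1$ to conclude that each limit is the first eigenfunction of a hemisphere, and then contradicts the orthogonality of the two limits via uniqueness of the first eigenfunction. You instead feed the $o(1)$ deficit $\lambda_1(\Omega_k)-\bar\lambda_k$ into the quantitative Sperner inequality to obtain $L^1$ convergence of the sets $\Omega_k$ to a hemisphere $\BB_\infty$, which confines the limit $\psi_\infty$ of the second eigenfunctions to $H^1_0(\BB_\infty)$ and lets you conclude directly from $\lambda_2(\BB_\infty)=2n$ that $\liminf(\lambda_2-\lambda_1)\geq n+1>\eta$. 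What each approach buys: yours yields the explicit limiting gap $n+1$ (making transparent that the lemma is only meaningful for $\eta<n+1$, which is all that is used in \eqref{eqn: useful n}) and avoids the somewhat delicate handling of the sign-changing second eigenfunction's positivity set in the paper's argument; the paper's route needs only qualitative Sperner rigidity, though since Theorem \ref{thm: quantitative sperner} is proved independently in Section \ref{sec: sperner} and is already used in Proposition \ref{l:approx}, your use of it introduces no circularity or new dependency. One small point to make explicit: passing the orthogonality $\int\psi_k u_{\Omega_k}=0$ to the limit requires $u_{\bar\BB_k}\to u_{\BB_\infty}$ in $L^2$, which you use implicitly; this follows from the continuity of cap eigenfunctions in the radius (e.g.\ via Lemma \ref{lem: caps}), so it is a one-line fix rather than a gap.
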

\begin{proof} We argue by contradiction. Suppose we may find a sequence of functions $(u_{1,k}, u_{2,k})$ satisfying \eqref{eqn: harmonic functions} such that $\delta_{C,k}(1) \to 0$ and $\Omega_k :=\{ u_{1,k}>0\} \cap \pa B_1$ has $|\lambda_2(\Omega_k) - \lambda_1(\Om_k)|\to 0$. 

By Lemma~\ref{l:fh}, we see that $|\lambda_1(\Omega_k) - (n-1)|\to 0$ and $|\lambda_1(\BB_k)- (n-1)|\to 0$, where $\BB_k$ is a spherical cap with $|\BB_k|=|\Omega_k|$. The first fact shows that $|\lambda_1(\Omega_k) -(n-1)|\to 0$, and the second ensures that $|\Omega_k| \to |\BB_+|$, where $\BB_+ \subset S^{n-1} $ denotes a hemisphere.

 Now, for each $k,$ let $\tilde{u}_{1,k}$ and $\tilde{u}_{2,k}$ respectively denote first and second eigenfunctions of $\Omega_k$, normalized to have $L^2$ norm equal to one and extended by zero to be defined on the entire sphere. So, there exist $\tilde{u}_{1, \infty}, \tilde{u}_{2, \infty} \in W^{1,2}(S^{n-1})$ such that  after passing to a subsequence,
 \begin{align*}
 	\tilde{u}_{1, k} \to \tilde{u}_{1, \infty} & 
 	 \ \text{ weakly in }W^{1,2}(S^{n-1}), \   \text{strongly  in }L^2(S^{n-1}), \ \text{pointwise a.e.}, \\
 	\tilde{u}_{2, k} \to  \tilde{u}_{2, \infty}& 
 	 \ \text{ weakly in }W^{1,2}(S^{n-1}),  \ \text{strongly  in }L^2(S^{n-1}), \ \text{pointwise a.e.}, 
 	 \end{align*}
and $ \int \tilde{u}_{1 , \infty} \cdot \tilde{u}_{2,\infty} = 0$.

 Let $\Omega = \{ \tilde{u}_{1, \infty} >0\}$. By lower semicontinuity of the norm, $\lambda_1(\Omega)\leq n-1$.	By Fatou's lemma, $|\Omega|  \leq \liminf |\{ \tilde{u}_{1,k} >0\}| = \lim |\Om_k| =|\BB_+|$. 
	From Sperner's inequality and the monotonicity of $\lambda_1$ with respect to  set inclusion, we see that up to a translation, $\Omega = \BB_+$ and $\tilde{u}_{1,\infty}$ is the first eigenfunction of $\BB_+$.
	
 Now, let $\Omega_2 = \{ \tilde{u}_{2, \infty} >0\}$. The same argument and the uniqueness of the first eigenfunction show that $\tilde{u}_{1,\infty} = \tilde{u}_{2,\infty}$, contradicting their orthogonality.
  This completes the proof. \end{proof}

\begin{lemma}\label{lem: caps} Fix a compact interval $[r_1,r_2]  \subset (0,\pi)$, let $r_1 \leq r,\bar r \leq r_2$,  and let $\BB,\bar \BB\subset \pa B_1$ denote spherical caps of radius $r, \bar r$ centered at the north pole. Let $\lambda$ and $\bar \lambda$ denote the first eigenvalue of $\BB$ and $\bar \BB$ respectively, and let $v$ and $\bar v$ denote the corresponding first eigenfunctions extended by zero to be defined on the whole sphere and normalized so that $\|  v\|_{L^2(S^{n-1})} = \| \bar v\|_{L^2(S^{n-1})} =1 $. There exist constants $C= C(r_1,r_2)$ and $\bar{c} = \bar{c}(r, \bar r)$ such that
\[
\int_{S^{n-1}} |\bar{c}v- \bar v|^2 \leq C(\lambda - \bar \lambda)^2.
\]	
\end{lemma}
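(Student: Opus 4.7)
\bigskip

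\noindent\textbf{Proof proposal.} The plan is to reduce the statement to a one-parameter family of Sturm--Liouville problems on $(0,\pi)$ and to use smooth dependence on the parameter to obtain linear control of the eigenfunctions by the eigenvalues.

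\medskip

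\noindent\emph{Step 1 (Radial reduction).} Because $\BB$ and $\bar\BB$ are both spherical caps centered at the north pole and the first eigenfunctions of a domain with rotational symmetry are themselves rotationally symmetric (uniqueness of the first eigenfunction up to sign), both $v$ and $\bar v$ depend only on the polar angle $\theta\in[0,\pi]$. Writing the spherical Laplacian in these coordinates and letting $c_n=|S^{n-2}|$, the normalization $\|v\|_{L^2(S^{n-1})}=1$ becomes $c_n\int_0^r v(\theta)^2\sin^{n-2}\theta\,d\theta=1$, and the eigenvalue problem reduces to the Sturm--Liouville equation
\[
Lw:=-\tfrac{1}{\sin^{n-2}\theta}\bigl(\sin^{n-2}\theta\,w'\bigr)'=\mu w
\]
on $(0,s)$ with $w(s)=0$ and $w$ regular at $0$.

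\medskip

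\noindent\emph{Step 2 (A one-parameter family).} For each $\mu>0$, classical Sturm--Liouville theory (Frobenius expansion at the singular point $\theta=0$) produces a unique solution $u^\mu$ of $Lu=\mu u$ on $(0,\pi)$ that is regular at the origin and normalized by $u^\mu(0)=1$, and $\mu\mapsto u^\mu$ is real-analytic in any $C^1$-norm on compact subsets of $[0,\pi)$. Define $s(\mu)$ to be the first zero of $u^\mu$. Then $s:\Lambda\to(0,\pi)$ is a smooth, strictly decreasing bijection on its range, with $(u^\mu)'(s(\mu))\ne 0$ because the first Dirichlet eigenfunction has a simple zero on the boundary. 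In particular, for $\mu,\bar\mu$ in any compact set with $s(\mu),s(\bar\mu)\in[r_1,r_2]$, the implicit function theorem gives
\[
|s(\mu)-s(\bar\mu)|\le C(r_1,r_2)\,|\mu-\bar\mu|.
\]

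\medskip

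\noindent\emph{Step 3 (ODE perturbation).} The difference $\phi=u^\lambda-u^{\bar\lambda}$ satisfies $L\phi-\lambda\phi=(\lambda-\bar\lambda)u^{\bar\lambda}$ with $\phi(0)=0$ and regular at $0$. By the variation of parameters formula applied to the Green's function of $L-\lambda$ (the singularity at $\theta=0$ is integrable thanks to the weight), one obtains
\[
\|u^\lambda-u^{\bar\lambda}\|_{C^1([0,R])}\le C(R,r_1,r_2)\,|\lambda-\bar\lambda|
\]
for any fixed $R\in(r_2,\pi)$, uniformly as $\lambda,\bar\lambda$ range over the compact set corresponding to $[r_1,r_2]$.

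\medskip

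\noindent\emph{Step 4 (Choice of $\bar c$ and conclusion).} Choose signs so that $u^\lambda,u^{\bar\lambda}>0$ on their respective open intervals, and put $\bar c=\|u^\lambda\|_\ast/\|u^{\bar\lambda}\|_\ast$, where $\|\cdot\|_\ast$ denotes the weighted $L^2$-norm on $(0,\pi)$ with weight $\sin^{n-2}\theta$; this is precisely the factor that realizes $\bar c v=u^\lambda/(c_n^{1/2}\|u^{\bar\lambda}\|_\ast)$ and $\bar v=u^{\bar\lambda}/(c_n^{1/2}\|u^{\bar\lambda}\|_\ast)$. Assuming WLOG $r\le\bar r$, extend $u^\lambda$ by zero past $r$ and write
\[
\int_{S^{n-1}}|\bar c v-\bar v|^2=\frac{1}{\|u^{\bar\lambda}\|_\ast^2}\Bigl(\int_0^r(u^\lambda-u^{\bar\lambda})^2\sin^{n-2}\theta\,d\theta+\int_r^{\bar r}(u^{\bar\lambda})^2\sin^{n-2}\theta\,d\theta\Bigr).
\]
The first integral is $\le C(\lambda-\bar\lambda)^2$ by Step 3. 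For the second, use $u^\lambda(r)=0$ together with Steps 2 and 3 to estimate, for $\theta\in[r,\bar r]$,
\[
|u^{\bar\lambda}(\theta)|\le |u^{\bar\lambda}(\theta)-u^\lambda(\theta)|+\|(u^\lambda)'\|_{L^\infty}|\theta-r|\le C|\lambda-\bar\lambda|,
\]
so the second integral is bounded by $C|\bar r-r|(\lambda-\bar\lambda)^2\le C(\lambda-\bar\lambda)^2$. Noting that $\|u^{\bar\lambda}\|_\ast$ is bounded below by a positive constant depending only on $[r_1,r_2]$ concludes the argument.

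\medskip

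\noindent\emph{Main obstacle.} The only nontrivial point is the smooth (in $\mu$) analysis of the regular solution of $Lu=\mu u$ near the singular endpoint $\theta=0$, i.e.\ Step 2. Once this classical Sturm--Liouville fact is granted, the rest is elementary ODE perturbation and bookkeeping. A naive spectral expansion argument would only give the weaker estimate $\|\bar c v-\bar v\|_{L^2}^2\le C|\lambda-\bar\lambda|$, so it is essential to use the one-parameter smoothness of the family $\{u^\mu\}$ rather than treating $v$ and $\bar v$ as abstract eigenfunctions of two unrelated operators.
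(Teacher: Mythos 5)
Your proposal is correct and follows essentially the same route as the paper: reduce to the radial Sturm--Liouville problem for the caps and exploit smooth (Lipschitz) dependence of the regular solution on the eigenvalue parameter to obtain a pointwise bound $|u^{\lambda}-u^{\bar\lambda}|\le C|\lambda-\bar\lambda|$, then integrate. The only difference is in implementation: the paper realizes the smooth dependence concretely via the power-series (associated Legendre) expansion about the pole and the recursion for the coefficients $a_k(\lambda)$, whereas you invoke classical Frobenius/Gr\"onwall perturbation theory and, via the implicit function theorem applied to the first zero, treat the annulus between the two cap radii explicitly --- a step the paper leaves implicit.
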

\begin{proof}
	The functions $v$ and $\bar v$ are radially symmetric, and so with a slight abuse of notation we let $v$, $\bar v$  denote the radial part of the functions. These functions can be extended to $(r, \pi)$, $(\bar{r}, \pi)$ to satisfy
	\begin{align*}
	v''(\phi) +(n-2)\frac{\cos\phi}{\sin \phi} v'(\phi)& = - \lambda v(\phi) \text{ for }\phi \in (0, \pi),\\
	\bar v''(\phi) +(n-2)\frac{\cos\phi}{\sin \phi} \bar v'(\phi)& = - 
\bar \lambda \bar v(\phi) \text{ for }\phi \in (0, \pi),\\
	\end{align*}
with $v(r) = \bar v(\bar r) = 0$, $v'(0) = \bar v'(0) = 0$ and $v,\overline{v} \geq 0$ on $(0,r)$ and $(0,\overline{r})$ respectively.  
Setting $f_\lambda(t) =\bar c \,v(\cos^{-1}(t))$, where $\bar c $ is a constant to be chosen later in the proof, we see that $f_\lambda$ satisfies 
\[
 (1-t^2)f_\lambda''(t)-(n-1)tf_\lambda'(t)+\lambda f_\lambda(t)=0, 
\]
and likewise for $f_{\bar{\lambda}}(t) = \bar v (\cos^{-1}(t))$.
The solutions to the above equation are Legendre functions and are well known. Expanding $\displaystyle v=\sum_{k=0}^{\infty} a_k(\lambda) (1-t)^k$ as a series solution about
$t=1$ we obtain the recursion formula 
\begin{equation}\label{recursion}
 a_{k+1}(\lambda)= a_k(\lambda)\frac{k^2+(n-2)k-\lambda}{(k+1)(2k+n-1)}. 
\end{equation}
The series converges absolutely on  the interval $(-1,3)$. We normalize by choosing the zero order coefficient $\bar a_0$ for $ f_{\bar\lambda}$ to satisfy the constraint $\|\bar v\|_{L^2(S^{n-1})}=1$ and by choosing the zero order coefficient $a_0$ of $f_\lambda$ so that $a_0 = \bar a_0$, in this way determining the constant $\bar c$ in the statement of the lemma.  Then 
\begin{equation}\label{eqn: ak}
 a_{k+1}(\lambda) = \prod_{j=0}^k \frac{j^2 + (n-2)j - \lambda}{(j+1)(2j+n-1)}. 
\end{equation}
Taking the derivative of $\log a_{k+1}$ with respect to $\lambda$ we obtain 
\[
 |a_{k+1}'(\lambda)| =  |a_{k+1}(\lambda)| \sum_{j=0}^k \frac{1}{(j+1)(2j+n-1)} \leq M |a_{k+1}(\lambda)|. 
\]
Furthermore, from \eqref{recursion}, we see that for any compact interval $I \in (0 ,\infty)$, there is a constant $C=C(I)$ such that $ |a_{k+1}(\lambda)| \leq C/2^k$ for all $\lambda \in I$. 
Therefore, 
$|a_{k+1}(\lambda) - a_{k+1}(\overline{\lambda})| \leq C |\lambda - \overline{\lambda}|/2^k$ for any $\lambda, \bar\lambda \in I$. Then 
summing over $k$ we obtain for any $t \in [0,t_0]$ with $t_0 < \pi$ that 
\[
 |f_{\lambda}(t) - f_{\overline\lambda}(t)| \leq |\lambda - \overline{\lambda}| \sum_{k=0}^{\infty}\frac{C}{2^k} < C |\lambda - \overline{\lambda}|.
\]
 The conclusion then follows after changing variables back to $\phi$ and integrating over $\pa B_1$. 
\end{proof}

The next proposition is the higher dimensional analogue of Proposition~\ref{l:approximate2}. It shows that each of the $u_i$, $i=1,2$, is well approximated in $L^2$ by a linear function. 
\begin{proposition}
	 \label{l:approx} There exist $\e_0 = \e_0 (n)$ and $C=C(n)$ such that the following holds.
Assume $\log\left(\frac{J(1)}{J(\rho)}\right)\leq \e_0$ and $0 \leq \rho \leq 1/2$.
Let $v$ be the first eigenfunction of the spherical cap of radius $\pi/2$ in $\pa B_1$, extended by $0$ to be defined on $\pa B_1$. For $i=1,2$ there exist $\beta_i = \beta_i(u_i)$   such that, up to a rotation, we have 
\[
  \int_{B_1 \setminus B_{\rho}} \left(r\beta_i v - u_i \right)^2 \leq C \log\left(\frac{J(1)}{J(\rho)}\right) \| u_i \|_{W^{1,2}(B_1)}^2.
\]
\end{proposition}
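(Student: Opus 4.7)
The plan is to mirror the structure of Proposition~\ref{l:approximate2}, replacing the two-dimensional explicit analysis of sine functions with the quantitative Sperner inequality (Theorem~\ref{thm: quantitative FK general} on $S^{n-1}$) and the eigenfunction comparison for spherical caps provided by Lemma~\ref{lem: caps}. Set $\e = \log(J(1)/J(\r))$. As in Proposition~\ref{l:approximate2}, a Chebyshev argument on $\int_\r^1 \log(J(r))'\,dr = \e$ produces some $r_0 \in (1-\e/5,\,1]$ at which $\log(J(r_0))' \leq 10\e$, so after rescaling $B_{r_0} \to B_1$ (which alters $\e$ only by a dimensional factor) we may assume $\log(J(1))' \leq C\e$. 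This forces $\delta_A(1), \delta_B(1), \delta_C(1) \leq C\e$. Provided $\e_0$ is chosen small, Lemma~\ref{l:fh} then yields $|\lambda_{1,1}^i - \bar{\lambda}^i| \leq C\e$ and $|\bar{\lambda}^i - (n-1)|^2 \leq C\e$, and Lemma~\ref{l:eigengap} furnishes a uniform spectral gap $\lambda_{2,1}^i - \lambda_{1,1}^i \geq c(n) > 0$.

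Fix $i \in \{1,2\}$, set $\hat\beta_i = \int_{\pa B_1} u_i\, \tilde u_{i,1}$, and split the integral by the triangle inequality:
\[
\int_{B_1 \setminus B_\r} (r \beta_i v - u_i)^2 \;\leq\; 3\,\mathrm{I} + 3\,\mathrm{II} + 3\,\mathrm{III},
\]
with
\[
\mathrm{I} = \int_{B_1 \setminus B_\r} (u_i - r u_i(1,\theta))^2, \quad \mathrm{II} = \int_{B_1 \setminus B_\r} (r u_i(1,\theta) - r \hat\beta_i \tilde u_{i,1})^2, \quad \mathrm{III} = \int_{B_1 \setminus B_\r} (r \hat\beta_i \tilde u_{i,1} - r \beta_i v)^2.
\]
Term $\mathrm{I}$ is bounded by $C\e\|u_i\|_{W^{1,2}(B_1)}^2$ directly by Lemma~\ref{l:onehomogen n}. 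For term $\mathrm{II}$, writing the domain in polar coordinates reduces it to $C\int_{\pa B_1}(u_i - \hat\beta_i \tilde u_{i,1})^2$; combining estimate \eqref{eqn: deltaA est n} for $\delta_A(1)$ with the spectral gap and the bound $\log(J(1))' \leq C\e$ gives $\mathrm{II} \leq C\e \|u_i\|_{W^{1,2}(B_1)}^2$.

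The essential new step is term $\mathrm{III}$, where the quantitative Sperner inequality enters. Let $\bar{\BB}_i \subset S^{n-1}$ be a spherical cap with $|\bar{\BB}_i| = |\Omega_i^1|$ and let $\bar v_i$ be its first Dirichlet eigenfunction, extended by $0$ and normalized so $\|\bar v_i\|_{L^2(S^{n-1})} = 1$. Since $|\bar{\lambda}^i - (n-1)|\leq C\sqrt\e$, the cap $\bar{\BB}_i$ has volume in a compact subset of $(0, |S^{n-1}|)$, so Theorem~\ref{thm: quantitative FK general} applies on $S^{n-1}$ with a constant depending only on $n$. Up to a rotation placing the center of $\bar{\BB}_i$ at the north pole, it gives
\[
\int_{S^{n-1}} (\tilde u_{i,1} - \bar v_i)^2 \;\leq\; C\big(\lambda_{1,1}^i - \bar{\lambda}^i\big) \;\leq\; C\e.
\]
Meanwhile, because the radii of $\bar{\BB}_i$ and the hemisphere lie in a compact interval about $\pi/2$, Lemma~\ref{lem: caps} furnishes a constant $\bar c_i$ with $\int_{S^{n-1}}(\bar c_i v - \bar v_i)^2 \leq C(\bar{\lambda}^i - (n-1))^2 \leq C\e$. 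Choosing $\beta_i = \hat\beta_i\bar c_i$, the trace bound $\hat\beta_i^2 \leq \|u_i\|_{L^2(\pa B_1)}^2 \leq C\|u_i\|_{W^{1,2}(B_1)}^2$ and the triangle inequality yield $\mathrm{III} \leq C\e\|u_i\|_{W^{1,2}(B_1)}^2$. Summing the three terms completes the proof.

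The main obstacle, as in Proposition~\ref{l:approximate2}, is ensuring that all constants remain independent of $\e$. In two dimensions the eigenfunctions of circular arcs are explicit sines; here no such formula is available, so the uniform constants must come from a compactness-plus-quantitative-estimate package: smallness of $\delta_C(1)$ forces $\Omega_i^1$ into a compact family of sets with near-hemisphere volume, on which the Sperner constant from Theorem~\ref{thm: quantitative FK general} is uniform, the spectral gap from Lemma~\ref{l:eigengap} is uniform, and the cap-eigenfunction comparison of Lemma~\ref{lem: caps} is available. This is precisely where the eigenfunction part of the quantitative Faber-Krahn inequality (rather than only the asymmetry part) is indispensable.
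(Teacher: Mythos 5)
Your proposal is correct and follows essentially the same route as the paper's proof: the Chebyshev rescaling so that $\log(J(1))'\leq C\e$, Lemma~\ref{l:onehomogen n} for the one-homogeneity term, the $\delta_A(1)$ estimate with the spectral gap of Lemma~\ref{l:eigengap} for the eigenfunction term, and then the quantitative Sperner inequality plus Lemma~\ref{lem: caps} for the comparison with the hemisphere eigenfunction, with $\beta_i=\hat\beta_i\bar c_i$. The only cosmetic difference is that you fold the paper's third and fourth triangle-inequality pieces into a single term III and split it internally, which is equivalent.
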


\begin{proof} Let $\e =\log(J(1)/J(\rho))$.
Since $\e = \int_\rho^1 \log(J(r))'\,dr$ and the integrand is positive, it follows that $\log(J(r))' \leq 10\e$ outside a set of measure $\e/10$. In particular, $\log(J(r))' \leq 10 \e$ for some $r \in (1-  \e/5, 1]$.
 By scaling we assume that $\log(J(r))'\leq \e$ at $r=1$.

Fix $i \in \{1,2\}$. For the remainder of the proof, we omit subscripts $i$ for notational simplicity. Let $\Om = \pa B_1 \cap \{u>0\}$ and let $\tilde{u}$ denote the first eigenfunction of $\Omega$, normalized so that $\| \tilde{u}\|_{L^2(\Omega)}=1$ and extended by zero to be defined everywhere on 
$\pa B_1.$  Let $\bar \BB \subset \pa B_1$ denote a spherical cap centered at the north pole such that $|\bar \BB| = |\Omega|$, and let $\bar v$ denote its first eigenfunction with the normalization $\| \bar v\|_{L^2(\bar \BB)} =1$ and extended by zero to be defined on the whole $\pa B_1.$  Up to a rotation, we may assume that the infimum over $x\in S^{n-1}$ in \eqref{eqn: stability statement}  of Theorem~\ref{thm: quantitative sperner} is achieved at the north pole.
Lemmas~\ref{l:fh} and \ref{l:eigengap} guarantee that for dimensional constants $c$ and $C$,
 \begin{equation}\label{eqn: useful n}
 \left|\lambda_1 - \lambda(\bar\BB)\right| \leq C\e, \qquad \left|\lambda_1(\bar \BB) - (n-1)\right| \leq C\sqrt{\e}, \qquad \lambda_2 - \lambda_1\geq c, 
 \end{equation}

 Let $\hat{\beta} = \beta_1 = \int_{\pa B_1} u \tilde{u}$ be as in \eqref{eqn: deltaA est n}, and let $\bar c$ be as in Lemma~\ref{lem: caps} applied to  $r =  \pi/2$ and the radius $\bar r $ of $\bar \BB$. Let $\beta = \bar c \hat{\beta}$. We use the triangle inequality to see that
  \begin{equation}\label{eqn: starting point n}
  	\begin{split}
    \int_{B_1\setminus B_\rho} (u - \beta r v)^2  & \leq 4\int_{B_1\setminus B_\rho} (u - ru(1,\theta))^2 
   +4\int_{B_1\setminus B_\rho} (ru(1,\theta) - \hat{\beta} r\tilde{u})^2\\
  & + 4\hat{\beta}^2\int_{B_1\setminus B_\rho}  (r\tilde{u} - r\bar v)^2 +  4 \hat{\beta}^2\int_{B_1\setminus B_\rho}  (  r\bar v- r\bar{c} v)^2.
  	\end{split}
  \end{equation}

We can estimate the first two terms on the right-hand side of \eqref{eqn: starting point n} precisely the way we estimated their two-dimensional analogues in the proof of Proposition~\ref{l:approximate2}. Indeed, by Lemma~\ref{l:onehomogen n}, for the first term we have
\begin{equation}\label{e1}
	4\int_{B_1\setminus B_\rho} (u - ru(1,\theta))^2  \leq C \e \| u\|_{W^{1,2}(B_1)}^2.
\end{equation}
For the second term on the right-hand side of \eqref{eqn: starting point n}, we use the estimate \eqref{eqn: deltaA est n} for $\delta_A(1)\leq \e$  and the
 uniform eigenvalue gap of \eqref{eqn: useful n} to see that  for a dimensional constant $C$ we have
 \begin{equation}\label{e2}
 	 	4\int_{B_1\setminus B_\rho} (ru(1,\theta) - \hat{\beta} r\tilde{u})^2  \leq C \int_{\pa B_1} (u(1,\theta) - \hat{\beta} \tilde{u})^2 \leq  C\e\| u \|_{W^{1,2}(B_1)}^2 \,.
 \end{equation}

 To estimate the third term on the right-hand side of \eqref{eqn: starting point n}, we call upon the quantitative form of Sperner's inequality established in Theorem~\ref{thm: quantitative sperner}:
 \begin{equation}\label{e3}
 \begin{split}
 4\int_{B_1\setminus B_\rho} r^2 (\tilde{u} - \bar v)^2 &  \leq C \hat{\beta}^2 \int_{\pa B_1} (\tilde{u} - \bar v)^2\leq C\hat{\beta}^2 \left(\lambda_1 -\lambda_1(\bar \BB)\right) \leq  C\e\| u\|_{W^{1,2}(B_1)}^2 ,	
 \end{split}	
 \end{equation}
where in the final inequality we recall \eqref{eqn: useful n} and  that $\hat{\beta} \leq  \| u\|_{L^2(\pa B_1)} \leq C\| u\|_{W^{1,2}(B_1)}$.

 Finally, we estimate the fourth term in \eqref{eqn: starting point n}. Applying Lemma~\ref{lem: caps} with $r=\pi /2$ and $\bar r$  the radius of $\bar\BB$ followed by \eqref{eqn: useful n}, we see that 
 \begin{equation}\label{e4}
 	\begin{split}
 	4\hat{\beta}^2 \int_{B_1\setminus B_\rho}  (  r\bar v- r\bar v)^2 &\leq  C \hat{\beta}^2\int_{\pa B_1}  (\bar v - c v)^2 \\
	 &\leq C\hat{\beta}^2\left(\lambda_1(\bar \BB) - (n-1)\right)^2\leq C\e\|u\|_{W^{1,2}(B_1)}^2.		
 	\end{split}
 \end{equation}
 Combining \eqref{e1}--\eqref{e4}, we conclude the proof of the proposition.
\end{proof}

We now  prove Theorem~\ref{t:stability} in the case $n\geq 3$.

\begin{proof}[Proof of Theorem~\ref{t:stability} for $n\geq 3$]
Let $\epsilon_0$ be as in Proposition \ref{l:approx}. If $\log(J(1) / J(\rho)> \epsilon_0$, then we simply choose $\beta_i=\|u_i \|_{L^2(B_1)}$ and $\nu=e_1$, and the result follows trivially for large enough $C$. Let now $\e = \log(J(1) / J(\rho)$ and assume $\epsilon \leq \epsilon_0$. As in the proof of the two-dimensional case, Theorem~\ref{t:stability} almost follows from Proposition~\ref{l:approx}, but it remains to show that the truncated linear functions obtained for each of $u_1,u_2$ in Proposition~\ref{l:approx} can be taken to have disjoint supports. We also repeat the following notation from Proposition~\ref{l:approx}, now emphasizing the dependence on $i=1,2$:  let $\Omega_i = \{ u_i> 0\} \cap \pa B_1,$ let  $\bar \BB_i \subset \pa B_1$ be the spherical cap of the same volume as $\Omega^i$ with center achieving the infimum over $x \in S^{n-1}$ in \eqref{eqn: stability statement} of Theorem~\ref{thm: quantitative sperner},  and $\BB_i$ the hemisphere with the same center. We first show that 
\begin{equation}\label{eqn: sm diff}
| \BB_1 \Delta \BB_2| \leq C\e^{1/2}.
\end{equation}
Indeed, we repeatedly apply the triangle inequality to find
\[
| \BB_1 \Delta \BB_2| \leq \left(\left|\BB_1 \Delta \bar \BB_1\right| + \left|\BB_2 \Delta \bar \BB_2\right|\right) + \left( \left|\bar \BB_1 \Delta \Omega_1\right| + \left|\bar \BB_2 \Delta \Omega_2\right| \right) + \left| \Omega_1 \Delta \Omega_2\right|.
\]
Since the supports of $u_1$ and $u_2$ are disjoint, $| \Omega_1 \Delta \Omega_2| =0$. Next, we can apply Theorem~\ref{thm: quantitative sperner} to  $\Omega_1$ and $\Omega_2$ and use \eqref{eqn: useful n} to see that 
\[
\left|\bar{\BB}_1 \Delta \Omega_1\right| + \left|\bar {\BB}_2 \Delta \Omega_2\right| \leq C \left( \lambda_1\left(\Omega_1 \right) - \lambda_1(\bar \BB_2) \right)^{1/2} + \left(\lambda_1(\Omega_2) - \lambda_1 (\bar \BB_2)\right)^{1/2} \leq C\e^{1/2}.
\]
Next, using  \eqref{eqn: useful n}, we see that 
\[
\begin{aligned}
\left|\BB_1 \Delta \bar \BB_1\right| + \left|\BB_2 \Delta \bar \BB_2\right| &\leq C\left( \left| r_1 -\frac{\pi}{2}\right| - \left|r_2 - \frac{\pi}{2}\right|\right) \\
&\leq C\left(\left| \lambda_1(\bar \BB_1) - (n-1) \right| + \left|\lambda_1(\bar{\BB}_2) - (n-1)\right| \right)\leq C\e^{1/2}.
\end{aligned}
\]
Together these estimates prove the claim \eqref{eqn: sm diff}.

Without loss of generality we may assume that $\BB_2$ is centered at the south pole. Let $x_0$ be the center of $\BB_1$; from \eqref{eqn: sm diff} it follows that $d(x_0, o)\leq C\e^{1/2}$ where $o$ is the north pole and $d(\cdot , \cdot)$ denotes the distance on the sphere.  Let  $w_1$ be the truncated linear function corresponding to $u_1$ provided by Proposition~\ref{l:approx} (so $w$ is defined in polar coordinates by $w(r,\theta) = \beta r v(\theta)$) and let $\tilde{w}_1$ be its rotation so that $\{ \tilde{w}_1>0\}\cap \pa B_1$ is the hemisphere centered at the north pole. Since $|w_1| \leq C\| u_1\|_{W^{1,2}(B_1)}$, we see that 
\begin{align*}
\int_{B_1\setminus B\rho} | \tilde{w}_1 - u_1|^2 &   \leq C \e \| u\|_{W^{1,2}(B_1)}^ 2 + \int_{B_1\setminus B\rho}| \tilde{w}_1 - w_1|^2 \\
&\leq   C \e \| u\|_{W^{1,2}(B_1)}^ 2 + C \| u\|_{W^{1,2}(B_1)}^ 2 d(x_0, o)^2 \leq C \| u\|_{W^{1,2}(B_1)}^ 2\e .
\end{align*}
So, choosing $\tilde{w}_1$ and the truncated linear function $w_2$ corresponding to $u_2$ provided by Proposition~\ref{l:approx} completes the proof of the theorem.
\end{proof}

\subsection{Uniqueness of blow-ups when the ACF formula decays fast}\label{ssec: blowups}

Here we sketch the proof of Corollary \ref{c:uniqueblowup}. The argument gives a precise modulus for the convergence of the blow-ups as well.

\begin{proof}[Proof of Corollary \ref{c:uniqueblowup}.]
	We omit the subscripts $1, 2$ in $u_1, u_2$ below to simplify notation; any statement involving $u$ or $\b$ is valid for both $u = u_1$ and $u = u_2$, up to replacing $(\nu \cdot x)^+$ by $(\nu \cdot x)^-$ if $i = 2$.
	
	For $k=0,1, 2,\dots,$ consider the rescalings  $u^k(x) = \frac{u(2^{-k} x)}{2^{-k}}$. Recall that  $\w(r) = J(r) - J(0^+)$. We will show, by induction on $k$, that there are numbers $\b^k>0$ and unit vectors $\nu_k\in S^{n-1}$ such that
	\begin{equation}\label{e:inductiveh} 
		\int_{B_1 \sm B} \left|u^k - \b^k (\nu_k \cdot x)^+\right|^2 \leq C_1\, \w(2^{-k})
	\end{equation}
	and
	\begin{equation}\label{e:inductiveh2}
		 \left|\b^k - \b^{k-1}\right|^2 + \left|\nu_k - \nu_{k-1}\right|^2 \leq C_2\, \w(2^{-k-1}),
	\end{equation}
	where the constants $C_1$ and $C_2$ depend on $n$ and $\|u_i\|_{W^{1, 2}(B_1)}$ but are independent of $k$. For each $k$, the vector $\nu_k$ for $i=1$ will equal to $-\nu_k$ for $i=2$. For $k = 0$ this follows from a single application of Theorem \ref{t:stability}.
	
	Assume \eqref{e:inductiveh} and \eqref{e:inductiveh2} hold for all $j \leq k - 1$. Then we may apply Theorem \ref{t:stability} (with $\rho = 0$) to $u^k$, as $J[u_1^k, u_2^k]( r) = J[u_1,u_2]( 2^{-k}r) < 1 + \e$ by assumption:
	\[
	 \int_{B_1} |u^k - \b^k (\nu_k \cdot x)^+|^2 \leq C \w(2^{-k}) \sum_{i = 1}^2 \|u^k_i\|^2_{W^{1, 2}(B_1)}.
	\]
	The main point now is to estimate $\|u^k\|_{W^{1, 2}(B_1)}$ in a manner independent of $k$, and then show that $\b^k$ is close to $\b^{k-1}$ (and similarly for $\nu_k$).

	We apply the Caccioppoli inequality to $u^k$ to give
	\[
		\|u^k\|_{W^{1, 2}(B_1)} \leq C \|u^k\|_{L^2(B_2)},
	\]
	recalling they are subharmonic. From \eqref{e:inductiveh} with $k-1$, we have that
	\begin{align*}
		\|u^k\|_{L^2(B_2)} &= 2^{n/2+1} \|u^{k-1}\|_{L^2(B_1)} \\
		&\leq 2^{n/2+1} \|u^{k-1} - \b^{k-1} (\nu_{k-1} \cdot x)^+ \|_{L^2(B_1)} + C \b_i^{k-1}\\
		&\leq C\left[\sqrt{C_1 \w(2^{-k-1})} + \b^{k-1}\right]\\
		&\leq C\left[\sqrt{C_1 \e} + \b^{k-1}\right].
	\end{align*}
	Now applying \eqref{e:inductiveh2} for every $j \leq k-1$, we have
	\begin{align*}
		\b^{k-1} &\leq \b^0 + \sum_{j = 1}^{k - 1}\big|\b^{j} - \b^{j-1}\big|\\
		&\leq C\|u\|_{W^{1, 2}(B_1)} + \sqrt{C_2} \sum_{j = 1}^{k - 1} \sqrt{\w(2^{-j-1})}\leq C\Big(1 + \int_0^{2^{-k-1}} \frac{\sqrt{\w(r)}}{r}\Big) \leq C.
	\end{align*}
	Choosing $C_1$ large enough and then $\e$ small in terms of $C_1$ gives
	\[
		\|u^k\|^2_{W^{1, 2}(B_1)} \leq C[C_1 \e + 1] \leq \frac{C_1}{2}.
	\]
	This implies \eqref{e:inductiveh} for $k$.
	
	To control $\b^k$, we change variables:
	\[
		 \int_{B_{1/2}} \left|u^{k-1} - \b^k (\nu_k \cdot x)^+\right|^2 = 2^{- n-2}  \int_{B_1} \left|u^k - \b^k (\nu_k \cdot x)^+\right|^2 \leq C \w(2^{-k}).
	\]
	Then
	\[
		 \int_{B_{1/2}} \left|\b^k (\nu_k \cdot x)^+ - \b^{k-1} (\nu_{k-1} \cdot x)^+\right|^2 \leq C \w(2^{-k-1})
	\]
	by combining with \eqref{e:inductiveh} with $k-1$. Direct evaluation leads to
	\[
	|\b^k - \b^{k-1}|^2 + |\nu_k - \nu_{k-1}|^2 \leq C_2 \w(2^{-k-1}),
	\]
	and so \eqref{e:inductiveh2} holds for $k$.

	We are now in a position to conclude. From \eqref{e:inductiveh2} the numbers $\b^k$ and vectors $\nu^k$ have
	\[
		\sum_{k=0}^\infty |\b^k - \b^{k-1}| + |\nu_k - \nu_{k-1}| \leq C (\e + \int_0^1 \frac{\sqrt{\w(r)}}{r}dr) < \infty,
	\]
	so the sequences converge to some $\b_i, v$. Set
	\[
		\tilde{\w}(r) = \w(r) + \int_0^{2 r} \frac{\sqrt{\w(r)}}{r}dr,
	\]
	which a nondecreasing function tending to $0$ at $0$. Then from \eqref{e:inductiveh},
	\[
	 \int_{B_1} |u^k - \b (v \cdot x)^+|^2 \leq C \tilde{\w}(2^{-k}),
	\]
	and so changing variables,
	\[
		2^{(n + 2)k} \int_{B_{2^{-k}}} |u - \b (\nu \cdot x)^+|^2 \leq C \tilde{\w}(2^{-k}).
	\]
	This may be rewritten to give
	\[
	\frac{1}{r^{n+2}} \int_{B_{r}} |u - \b (v \cdot x)^+|^2 \leq C\tilde{\w}(2 r) \rightarrow 0,	
	\]
	concluding the proof.
\end{proof}

\section{Quantitative stability for the Faber-Krahn inequality}\label{sec: sperner}
 We now turn toward the proof of the quantitative Faber-Krahn inequality, Theorem~\ref{thm: quantitative sperner}. As we discussed in the introduction, the proof is essentially identical in the cases of the round sphere, Euclidean space, and hyperbolic space. To keep the notation from becoming unreasonably heavy, we will therefore present the proof in case of the round sphere, and include remarks throughout the proof guiding the reader through any modifications needed to generalize the proof to Euclidean or hyperbolic spaces. We give a few remarks about the notation in this section. Since the ambient Euclidean space does not play as prominent a role in this section, we will let $n$ denote the dimension of the $n$-dimensional sphere and thus avoid using $n-1$ throughout the section. Furthermore, in Section \ref{sec: 2d qacf} it was necessary to distinguish between a solid Euclidean ball and a spherical cap. In this section that is not necessary, so we will simply use $B$ to denote the spherical cap. This notation is also convenient for this section because - as already stated - the proofs for Euclidean space and hyperbolic space are very similar. Consequently, it is natural to use notation that easily adapts to the other two settings: for the proof in Euclidean space $B$ would simply be a solid Euclidean ball.  
 
All three simply connected space forms share the property that balls centered at {\it any } point minimize the first eigenvalue. It it clear, then, that any  stability statement must involve the distance to a nearby ball to $\Omega$. In \eqref{eqn: stability statement}, we chose to ``mod out'' by this symmetry by taking the infimum over all $x$ in the space of the distance to balls $B(x)$ centered at $x$. It turns out to be more convenient to prove the slightly stronger statement given  below as Theorem~\ref{thm: quantitative sperner restated}, in which we get rid of the translational invariance by choosing a particular ``set center'' of a given set $\Omega$, defined in the following way.

Consider the standard embedding of the round sphere $S^{n}(1) \subset \R^{n+1}$ and define the  {\it set center} $x_\Omega$  of $\Omega$ by setting
\[
x_\Omega = \frac{y_\Omega}{|y_\Omega|}
\]
whenever $0\neq y_\Omega = \fint_{\Omega} y \,d\vol_g(y) \in \R^{n+1}$. It is worth noting that there are other possible ways that one may define a notion of ``barycenter'' of a set  $\Omega \subset S^{n}$. Another possible notion is given in \cite{BDF17}. \\
 
Throughout this section, given a set $\Omega \subset S^{n}$, we let $u_\Omega$ denote its first eigenfunction extended by $0$ to be defined on all of $S^{n}$ and normalized so that $\| u_\Omega\|_{L^2(\Omega)}=1$.

\begin{theorem}\label{thm: quantitative sperner restated}
Fix $n\geq 1$ and $v \in (0, |S^{n}|)$. There exists a constant $c=c(n,v)$ such that the following holds. Let  $\Omega\subset S^{n}$ with $|\Omega|=v$ be a set for which $x_\Omega\in S^{n}$ is  defined.  Letting $\B$ denote the spherical cap centered at $x_\Om$ with $|\B|=v$, we have 
\begin{equation}\label{eqn: stability statement sec 3}
	\lambda_1(\Omega) - \lambda_1(\B) \geq c \left( |\Omega \Delta\B|^2 + \int_{S^{n}} |u_\Omega - u_\B|^2 \right) \,.
\end{equation}
\end{theorem}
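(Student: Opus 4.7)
The plan is to prove Theorem~\ref{thm: quantitative sperner restated} by a selection principle argument, following the two-step scheme outlined in Section~\ref{ss: proof talk}. Assume for contradiction that the estimate fails. Then there exists a sequence of sets $\Omega_j \subset S^n$ with $|\Omega_j| = v$, with $x_{\Omega_j}$ a fixed point $x_0$ (after a rotation, which is permissible since $\B$ is centered at the set center), and such that
\[
\lambda_1(\Omega_j) - \lambda_1(\B) \leq \frac{d_j^2}{j}, \qquad d_j^2 := |\Omega_j \Delta \B|^2 + \int_{S^n} |u_{\Omega_j} - u_\B|^2.
\]
Since $\lambda_1(\Omega_j) \to \lambda_1(\B)$, a standard compactness/concentration-compactness argument together with the qualitative Faber-Krahn inequality forces $d_j \to 0$.

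The next step is to replace $\Omega_j$ with a minimizer $\Omega_j'$ (among sets of volume $v$ and set center $x_0$) of a penalized functional of the form
\[
\mathcal{F}_j(\Omega) = \lambda_1(\Omega) + \mathfrak{T}\,\tor(\Omega) + \tau\sqrt{c_j^2 + \big(c_j - d(\Omega, \B)^2\big)^2},
\]
where $c_j = d_j^2$ and $\tau, \mathfrak{T}$ are small fixed constants. The Kohler--Jobin inequality will be invoked here so that $\lambda_1 + \mathfrak{T}\tor$ can be treated as a perturbation of the torsion functional, placing the problem within the critically perturbed Alt--Caffarelli framework developed in the companion paper \cite{AKN1}. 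The penalization is designed so that $\Omega_j'$ inherits the same contradiction bound (up to fixed constants), while also being a nearly-minimizing configuration for an energy with good regularity theory. Invoking the $\e$-regularity results of \cite{AKN1}, for $j$ sufficiently large the boundary $\partial\Omega_j'$ is a small $C^{2,\alpha}$ normal graph over $\partial\B$, and $u_{\Omega_j'}$ converges smoothly to $u_\B$ on compact subsets of $\B$.

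The core of the argument is then the local stability statement (Step (2) of Section~\ref{ss: proof talk}): if $\Omega$ has $|\Omega|=v$, set center $x_\Omega = x_0$, and $\partial \Omega$ is a sufficiently small $C^{2,\alpha}$ graph over $\partial \B$ parametrized by a function $\varphi : \partial \B \to \R$, then
\[
\lambda_1(\Omega) - \lambda_1(\B) \geq c\left(|\Omega \Delta \B|^2 + \int_{S^n} |u_\Omega - u_\B|^2\right).
\]
This I plan to establish by a second variation / linear stability analysis. Parametrize a family of perturbations $\B_t$ with $\B_0 = \B$ and velocity $\varphi$, expand $t \mapsto \lambda_1(\B_t)$ to second order, and decompose $\varphi$ into spherical harmonics adapted to the rotational symmetry of $\B$. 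The volume constraint eliminates the $\ell = 0$ mode and the set center constraint $x_\Omega = x_0$ eliminates the $\ell = 1$ modes (which correspond to translations and lie in the kernel of the second variation). The task is to establish a strict spectral gap $\mathcal{Q}(\varphi) \geq c \|\varphi\|_{L^2}^2$ for the quadratic form on the remaining modes, and to upgrade this to the set and eigenfunction distances on the right-hand side.

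The main obstacle is this last spectral-gap step: on $S^n$, unlike in Euclidean space, neither $\lambda_1(\B)$ nor $u_\B$ is known in closed form, so the explicit computations used in \cite{BDV15} are unavailable. The plan is to replace explicit computation with the implicit spectral analysis alluded to in the introduction: one writes the mode-by-mode linearized eigenvalue problem as a second-order ODE along the geodesic radial variable, whose coefficients depend only on the known radial equation solved by $u_\B$, and then uses a maximum principle/comparison argument on this ODE to rule out sign changes of the associated Jacobi-type solution in all modes with $\ell \geq 2$. This yields the spectral gap. Once the inequality is proven for $\varphi$ measured in $H^{1/2}(\partial \B)$, the passage to the $L^2$ set distance follows from the graph parametrization, and the eigenfunction control follows from elliptic regularity combined with the Dirichlet problem's dependence on the domain (this is where $C^{2,\alpha}$ control of the boundary is crucial). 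Applying this local stability to $\Omega_j'$ and comparing with the contradiction inequality inherited from $\Omega_j$ yields the contradiction, completing the proof.
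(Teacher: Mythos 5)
Your proposal follows essentially the same route as the paper: a selection principle built on the torsion-augmented functional $\lambda_1 + \frak{T}\,\tor$ with the regularity and reduction-to-nearly-spherical-sets machinery of the companion paper \cite{AKN1}, the Kohler--Jobin inequality to relate the augmented and pure Faber--Krahn deficits, and a local stability step for $C^{2,\alpha}$ graphs over $\partial \B$ proved by a second-variation expansion in spherical harmonics, with the $\ell=0,1$ modes removed by the volume and set-center constraints and the spectral gap for the remaining modes obtained implicitly via a maximum-principle/comparison argument on the radial ODE, then upgraded from $H^{1/2}$ control of the graph function to the symmetric-difference and $L^2$ eigenfunction distances. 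The only cosmetic difference is that you re-sketch the penalization/contradiction scheme that the paper simply imports as a black-box theorem from \cite{AKN1}.
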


\smallskip

\begin{remark}[Sets with  no barycenter]{\rm
	The statement of Theorem~\ref{thm: quantitative sperner restated} may look restrictive compared to Theorem~\ref{thm: quantitative sperner}. However, if $\Omega$ is a set for  which the barycenter is not  defined, then  necessarily $\lambda_1(\Omega) - \lambda_1(\B ) \geq c$, and  so \eqref{eqn: stability statement} holds  trivially in this  case. In particular, Theorem~\ref{thm: quantitative sperner restated} implies Theorem~\ref{thm: quantitative FK general} in the case of the round sphere.
	}
\end{remark}


\begin{remark}[Set centers on Euclidean space and hyperbolic space]
\rm{	On Euclidean space, an open set $\Omega \subset \R^n$ has a uniquely defined barycenter $x_\Omega = \fint_\Om x \,dx$. For an open set $\Omega \subset H^n$ in  hyperbolic space, the set center $x_\Omega = \text{argmin}_{x_0 \in H^n} \int_{\Omega} d(x, x_0)^2 \, d\vol(x) $ is well defined. The analogues of Theorem~\ref{thm: quantitative sperner restated} hold in each of these two spaces with $x_\Omega$ defined in this way; the proof will be identical up to the obvious modifications. Once more, the analogues of Theorem~\ref{thm: quantitative sperner restated} for Euclidean and hyperbolic space imply Theorem~\ref{thm: quantitative FK general} on these space forms.
	}
\end{remark}

We will work in spherical coordinates $(\theta, \phi)  \in S^{n-1} \times (0, \pi)$ on $S^{n}$, in which the round metric takes the form $g_{S^n} = d\phi^2 + \sin^{2}(\phi)d\theta^2$, where $d\theta^2$ is used to denote the round metric on $S^{n-1}$. A point $(\theta, \phi) \in S^{n}$ has geodesic distance $\phi$  to the north pole $o \in S^{n}$.  The Laplacian of a function $f$ in spherical coordinates $(\theta,\phi)$ is given by  
\begin{equation}\label{eqn: laplace in spherical}
 \Delta_{S^{n}} f(\theta,\phi) = (\sin \phi)^{1-n} \frac{\partial }{\partial \phi} \left((\sin \phi)^{n-1} \frac{\partial f}{\partial \phi}\right) + (\sin \phi)^{-2} \Delta_{S^{n-1}} f. 
\end{equation}

\begin{remark}[Coordinates on hyperbolic space and Euclidean space]{\rm 
On hyperbolic space, we similarly use polar coordinates $(\theta ,\phi)$, so that the metric is expressed in these coordinates by $g_{hyp} =d\phi^2 + \sinh^{2}(\phi) d\theta^2$, where once more we let $d\theta^2$ denote the round metric on $S^{n-1}$. A point $(\theta ,\phi ) \in H^n$ has geodesic distance $\phi$ from the distinguished point from which the polar coordinates are centered. The Laplacian of a function $f: H^n\to \R$ in these coordinates is given by 
\[
\Delta_{H^n} f(\theta, \phi) = (\sinh \phi)^{1-n} \frac{\partial }{\partial \phi} \left((\sinh \phi)^{n-1} \frac{\partial f}{\partial \phi}\right) + (\sinh \phi)^{-2} \Delta_{S^{n-1}} f.
\]
 On Euclidean space we use standard polar coordinates $(\theta ,\phi)$, where $\phi$ is the distance to the origin and the Laplacian of a function $f: \R^n \to \R$ is given by .
 \[
\Delta_{\R^n} f(\theta, \phi) =  \phi^{1-n} \frac{\partial }{\partial \phi} \left(\phi^{n-1} \frac{\partial f}{\partial \phi}\right) +  \phi^{-2} \Delta_{S^{n-1}} f.
\] 
 }\end{remark}
 
 Throughout this section, we will use the shorthand $\B_\rho = \B(o,\rho)$ to denote the spherical cap of radius $\rho$ centered at the north pole.
We will frequently make the identification of the boundary of a geodesic ball, $\pa \B(o,\rho)$, with the sphere $S^{n-1}$, noting that the induced metric on $\pa \B(o,\rho)$ is $\sin^2(\rho) d\theta^2$, where $d\theta^2$ is the round metric on $S^{n-1}$.

We proceed in two steps: in Section~\ref{sec: NSS}, we establish Theorem~\ref{thm: quantitative sperner restated} for sets that are small perturbations of a spherical cap, and in Section~\ref{sec: SP}, we call upon results from our companion paper \cite{AKN1} to conclude Theorem~\ref{thm: quantitative sperner restated} in the general case. 

\medskip

\subsection{Stability for nearly spherical sets}\label{sec: NSS}
The main goal of this section is to establish Theorem~\ref{prop: quantitative for nearly spherical sets} below, which is  Theorem~\ref{thm: quantitative sperner restated} restricted to a  class of sets in $S^{n}$ that are small $C^{2,\alpha}$ perturbations of spherical caps.

The class of {\it nearly spherical sets} given in the following definition will be our main objects of interest in this section.
\begin{definition} Given $\rho \in (0,\pi)$ and $\xi\in C^{2,\alpha}(\pa \B_\rho)$ with $\| \xi\|_{C^{2,\alpha}(\pa \B_\rho)} \leq \e,$ we define the following:
\begin{enumerate}
\item 	A set $\Omega\subset S^{n}$ is  parametrized by $\xi$ over $\B_\rho$ if  in spherical coordinates $\pa \Om$ takes the form
\begin{align*}	\label{eq: def: NSS}
	\pa \Omega  &= \{ (\theta, (1 + \xi(\theta))\rho) : (\theta,\rho) \in \pa \B_\rho\}.
\end{align*}

 \item A set $\Omega \subset S^{n}$ is a nearly spherical set if $\Omega$ is parametrized by $\xi$ over $\B_\rho$ such that  $|\Omega| = |\B_\rho|$ and the barycenter $x_\Om$ of $\Om$ is the north pole $o$.
\end{enumerate}
\end{definition}
Note that the barycenter of a set parametrized over $\B_\rho$  is always defined provided that $\e$ is taken to be sufficiently small depending on $n$ and $\rho$. 
The following theorem establishes quantitative stability  for Sperner's inequality among nearly spherical sets.
\begin{theorem}[Quantitative stability for nearly spherical sets]\label{prop: quantitative for nearly spherical sets}
	Fix $n\geq 2$ and $\g\in (0,\pi/2)$. There exist postitive constants $c$ and $\e$ depending only on $n$ and $\g$ such that the following holds. Let $|\rho -\pi/2|<\g$ and let $\Omega$ be a nearly spherical set parametrized by $\xi$ over $\B_\rho$ with $\|\xi\|_{C^{2,\alpha}(\pa \B_\rho)} \leq \e$. Then
	\begin{equation}\label{eqn: stability statement sec 3 repeat}
	\lambda_1(\Omega) - \lambda_1(\B_\rho) \geq c \left( |\Omega\Delta \B_\rho|^2 + \int_{S^{n}} |u_\Om - u_{\B_{\rho}}|^2 \right)\,.
\end{equation}
\end{theorem}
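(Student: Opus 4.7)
The plan is a classical Fuglede-style second-variation argument, adapted to the sphere using the implicit spectral analysis strategy sketched in Section~\ref{ss: proof talk}. The starting point is a Taylor expansion
\[
\lambda_1(\Omega) - \lambda_1(B_\rho) = Q_\rho(\xi) + R(\xi),
\]
where $Q_\rho$ is a quadratic form obtained from standard Hadamard-type shape-derivative formulas and the remainder satisfies $|R(\xi)| \le C\|\xi\|_{C^{2,\alpha}(\partial B_\rho)}\|\xi\|_{H^1(\partial B_\rho)}^2$. The linear term vanishes because $|\nabla u_{B_\rho}|^2$ is constant on $\partial B_\rho$ by symmetry, so the first shape derivative of $\lambda_1$ on volume-preserving perturbations is zero.

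Next I would exploit rotational symmetry to diagonalize $Q_\rho$. Decomposing $\xi = \sum_{k\ge 0}\xi_k$ into spherical harmonics on $\partial B_\rho \cong S^{n-1}$, the rotational invariance of $B_\rho$ forces
\[
Q_\rho(\xi) = \sum_{k\ge 0} a_k(n,\rho)\,\|\xi_k\|_{L^2(\partial B_\rho)}^2,
\]
with coefficients independent of the direction inside each harmonic space. The volume constraint $|\Omega|=|B_\rho|$ eliminates the $k=0$ mode (up to a quadratic error absorbed into $R$), and the barycenter normalization $x_\Omega = o$ eliminates the $k=1$ modes, which correspond precisely to infinitesimal translations of $B_\rho$ and automatically yield $a_1 = 0$ by translation invariance of $\lambda_1$. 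Thus the problem reduces to establishing a uniform spectral gap $a_k \ge c(n,\gamma) > 0$ for all $k\ge 2$ and all $|\rho-\pi/2|<\gamma$.

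This spectral gap is the main obstacle, and the main novelty needed here over \cite{BDV15}: on $S^n$ we lack closed-form expressions for $u_{B_\rho}$ and $\lambda_1(B_\rho)$. The plan is to represent $a_k$ implicitly. Separating variables in the operator $-\Delta_{S^n}-\lambda_1(B_\rho)$ on $B_\rho$ turns each harmonic mode $\xi_k$ into a one-dimensional Sturm-Liouville problem in $\phi \in (0,\rho)$ whose coefficients involve $\sin\phi$ and the eigenvalue $k(k+n-2)$; solving this ODE yields a function $v_k$ that produces the first-order correction of $u_\Omega$ at the perturbation $\xi_k$. Writing out the Rayleigh quotient then expresses $a_k$ as a boundary combination of $u_{B_\rho}'(\rho)$, $v_k(\rho)$ and $v_k'(\rho)$. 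The key step is to show this combination has the correct sign using an ODE maximum principle: the factor $k(k+n-2)\ge 2n$ for $k\ge 2$ yields enough coercivity in the Sturm-Liouville operator that a comparison function constructed from the radial profile $u_{B_\rho}(\phi)$ (and its derivatives, using that $u_{B_\rho}'$ has a definite sign on $(0,\rho)$) stays above $v_k$ throughout $[0,\rho]$. Monotonicity of $a_k$ in $k$ then reduces the uniform gap to verifying $a_2>0$. The restriction $|\rho-\pi/2|<\gamma$ guarantees that all the ODE coefficients stay in a compact range, yielding uniform constants.

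With the coercivity bound $\lambda_1(\Omega)-\lambda_1(B_\rho) \gtrsim \|\xi\|_{L^2(\partial B_\rho)}^2$ in place, the asymmetry estimate follows at once from $|\Omega\Delta B_\rho| \le C\|\xi\|_{L^1(\partial B_\rho)}$. For the eigenfunction term, I would set up the equation satisfied by $w := u_\Omega - u_{B_\rho}$ on $\Omega \cap B_\rho$, extend $u_\Omega$ and $u_{B_\rho}$ across their respective boundaries using the $C^{2,\alpha}$ parametrization, and apply boundary Schauder regularity together with the eigenvalue closeness to obtain $\int_{S^n}|u_\Omega - u_{B_\rho}|^2 \le C\|\xi\|_{L^2(\partial B_\rho)}^2$, which combined with the coercivity closes the proof of \eqref{eqn: stability statement sec 3 repeat}.
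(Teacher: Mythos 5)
Your architecture is the same as the paper's: a Fuglede-type expansion of the deficit, diagonalization of the second variation in spherical harmonics with the volume and barycenter constraints removing the $k=0$ and $k=1$ modes, an implicit ODE/maximum-principle argument for the spectral gap (this part of your plan is indeed the paper's Theorem~\ref{t:gap}), and then separate control of the asymmetry and eigenfunction terms. The genuine gap is in the norm bookkeeping, which is precisely where the paper has to work. You expand the deficit with a remainder bounded by $C\|\xi\|_{C^{2,\alpha}}\|\xi\|_{H^1(\pa \B_\rho)}^2$ but claim coercivity only in the form $Q_\rho(\xi)\gtrsim \|\xi\|_{L^2(\pa \B_\rho)}^2$; these two do not match, and the absorption step fails: for $\xi$ proportional to a high spherical harmonic $Y_k$ one has $\|\xi\|_{H^1}^2\ge \mu_k\|\xi\|_{L^2}^2$, so $C\e\,\|\xi\|_{H^1}^2\le \tfrac{1}{2}Q_\rho(\xi)$ is violated once $\mu_k\gtrsim 1/\e$, no matter how small $\e$ is, and small multiples of high harmonics are admissible perturbations. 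Since $\delta^2\lambda_1(\B_\rho)$ is a first-order (Dirichlet-to-Neumann type) operator, the correct coercivity norm is $H^{1/2}$, and two ingredients are needed that your proposal does not contain: the expansion must be proved with remainder $\om\left(\|\xi\|_{C^{2,\alpha}}\right)\|\xi\|_{H^{1/2}(\pa \B_\rho)}^2$ (Theorem~\ref{lem: second var and deficit}, following Dambrine's Euclidean result), and the spectral gap, which a priori gives only an $L^2$ lower bound, must be upgraded to $\delta^2\lambda_1(\B_\rho)[\xi,\xi]\ge \hat\eta\,\|\xi\|_{H^{1/2}(\pa \B_\rho)}^2$; in the paper this upgrade is exactly Lemma~\ref{lem: l2 bound} combined with the structure of \eqref{eqn: second variation} in Corollary~\ref{cor: second var controls h1/2}, leading to Theorem~\ref{th: deficit controls H1/2}. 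Without these the coercivity step does not close.

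The eigenfunction estimate is also too optimistic as sketched. The operator $-\Delta-\lambda_1(\B_\rho)$ has $u_{\B_\rho}$ in its kernel, so ``Schauder regularity plus eigenvalue closeness'' applied directly to $u_\Omega-u_{\B_\rho}$ cannot yield the $L^2$ bound: one has to transplant $u_\Omega$ to $\B_\rho$ by a diffeomorphism built from the harmonic extension of $\xi$, subtract the projection $\alpha_\Omega u_{\B_\rho}$, and use orthogonality to $u_{\B_\rho}$ together with the gap $\lambda_2(\B_\rho)-\lambda_1(\B_\rho)$ to absorb the zeroth-order term; this is Proposition~\ref{prop: control efunc}. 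Moreover the natural bound there is in terms of $\|\xi\|_{H^{1/2}(\pa \B_\rho)}^2$, because the coefficients of the pulled-back operator are controlled by the gradient of the harmonic extension of $\xi$, not by $\|\xi\|_{L^2}$ alone; this is harmless once the deficit is known to control $\|\xi\|_{H^{1/2}}^2$, but it is a second place where your $L^2$-only accounting would not suffice.
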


The basic idea of the proof of Theorem~\ref{prop: quantitative for nearly spherical sets} is the following. In Section~\ref{ssec: 2nd var}, we show that the deficit $\lambda_1(\Om) - \lambda_1(\B_\rho)$ of a nearly spherical set is equivalent to the second variation of $\lambda_1(\B_\rho)$, up to an error that can be made arbitrarily small by taking $\|\xi\|_{C^{2,\alpha}(\pa \B_\rho)}$ to be small; see  Theorem~\ref{lem: second var and deficit}. Section~\ref{ssec: gap} contains  the core of the proof of Theorem~\ref{prop: quantitative for nearly spherical sets}: in Theorem~\ref{t:gap} we establish a gap in the spectrum for the second variation operator.  As we discuss in the introduction, this spectral analysis is carried out through an implicit argument based on the maximum principle. Theorem~\ref{t:gap} is then applied in Theorem~\ref{th: deficit controls H1/2} to show that the deficit $\lambda_1(\Om) - \lambda_1(\B_\rho)$ of a nearly spherical set controls the $H^{1/2}$ norm of the parametrizing function $\xi$.  Finally, in Proposition~\ref{prop: control efunc} below of Section~\ref{ssec: h12 and distance}, we show that the right-hand side of \eqref{eqn: stability statement} is controlled linearly by $ \| \xi\|_{H^{1/2}(\pa B)}^2$.

Recall that the $H^{1/2}$ seminorm of a function $\xi:\pa \B_\rho \to \R$ 
is defined by 
$
\| \xi \|_{\dot{H}^{1/2}(\pa \B_\rho)}^2 = \int_{\B_\rho} |\na h_\xi|^2,
$
where $h_\xi$ is the harmonic extension of $\xi$, i.e. the unique solution to
\begin{equation}\label{eqn: harmonic extension}
\begin{cases}
	\Delta h_\xi = 0 & \text{ in }\B_\rho\\
	h_\xi = \xi & \text{ on } \pa \B_\rho.
\end{cases}
\end{equation}
The $H^{1/2}$ norm of a function $\xi:\pa \B_\rho \to \R$ is defined by $\| \xi\|_{H^{1/2}(\pa \B_\rho)}^2 = \| \xi\|_{L^2(\pa \B_\rho)}^2 + \| \xi \|_{\dot H^{1/2}(\pa \B_\rho)}^2$. \\

\medskip


\subsubsection{The deficit and the second variation}\label{ssec: 2nd var}
The first eigenfunction $u_{\B_\rho}$ of a spherical cap $\B_\rho$  is radially symmetric and decreasing. We let $u_\rho(\phi) = u_{\B_\rho}(\theta,\phi)$ and note that $|\na u_{\B_\rho}(\theta,\phi)|= -u_\rho'(\phi)= |u_\rho'(\phi)|.$ In the sequel, we will use the shorthand $|u_\rho'|$ to refer to number $|u_\rho'(\rho)|>0$ and will let $\lambda_\rho = \lambda_1(\B_\rho)$.

 Given a function $\xi \in H^{1/2}(\pa \B_{\rho})$ with $\int_{\pa \B_{\rho}} \xi =0$,  let $w_\xi:\B_\rho \to \R$ be the solution to 
 \begin{equation}\label{eqn: eigenvalue pb extension}
	\begin{cases}
		-\Delta w_\xi = \lambda_{\rho} w_\xi & \text{ in } \B_{\rho}\\
	\ \ \ \	\ w_\xi = |u_\rho'| \xi & \text{ on } \pa \B_{\rho}\\
		\int_{\B_\rho} w_\xi u_{\B_\rho}= 0.
	\end{cases}
\end{equation}
Such a solution exists and is unique by the Fredholm Alternative and is continuous by \cite{GT}. For such a function $\xi$, the second variation $\delta^2 \lambda_1(\B_\rho)[\xi, \xi ]$ of $\lambda_1$ at $\B_\rho$ in the direction $[\xi, \xi]$ is given by
\begin{equation}\label{eqn: second variation}
		\delta^2 \lambda_1(\B_\rho)[\xi, \xi ] = 2 \int_{\B_\rho} \left(|\na w_\xi|^2 - \lambda_{\rho} w_\xi^2\right) \, dx +  \mathcal{H}_{\B_\rho} |u_\rho'|^2\int_{\pa \B_\rho}  \xi^2 \, dx;
\end{equation} 
see \cite[Lemma 2.8]{dl19} or \cite[Appendix A]{BDV15} for similar computations; note that on $\R^n$ this expression matches  \cite[Lemma 2.8]{dl19} since we take  $\xi$ with mean zero and since the ball has constant mean curvature. Here $\mathcal{H}_{\B_\rho} $ is the scalar mean curvature of $\pa \B_{\rho}$ with the sign convention that $\mathcal{H}_{\B_\rho} >0$ for $\rho < \pi/2$ and $\mathcal{H}_{\B_\rho} <0 $ for $\rho \in (\pi/2, \pi).$ On Euclidean space and hyperbolic space, the second variation takes the analogous form.

Let $L_\rho:H^{1/2}(\pa \B_\rho)\cap \{ \int_{\pa \B_\rho} \xi = 0\} \to L^2(\pa \B_\rho)$ denote the Dirichlet-to-Neumann map corresponding to the problem \eqref{eqn: eigenvalue pb extension} translated by the zero order term $\frac{1}{2}\mathcal{H}_{\B_\rho} |u_\rho'|^2$. More specifically, given $\xi \in H^{1/2}(\pa \B_\rho)$ with $\int_{\pa \B_\rho} \xi =0$, define
\begin{equation}\label{eqn: def DtoN}
L_{\rho}\xi = \pa_\nu w_\xi + \frac{1}{2}\mathcal{H}_{\B_\rho} |u_\rho'|^2 \xi,
\end{equation}
where $w_\xi$ is the unique solution to \eqref{eqn: eigenvalue pb extension} and $\nu$ is the outer unit normal to $\B_\rho$. In this way, we can express the second variation on the ball as 
\[
\delta^2 \lambda_1(\B_\rho)[\xi, \xi ] = 2 \int_{\pa \B_\rho} \xi L_{\rho}\xi .
\]
The following theorem shows that the deficit in Sperner's inequality for a nearly spherical set is equal to the second variation, up to an error depending on $\|\xi\|_{C^{2,\alpha}(\pa \B_\rho)}$.
\begin{theorem}\label{lem: second var and deficit} Fix $\g\in(0,\pi/2)$. 
 There is a modulus of continuity $\om$ depending only on $n$ and $\g$ such that the following holds. Fix $\rho$ with $|\pi/2-\rho|<\g$ and let $\Omega \subset S^{n}$ be a set parametrized by $\xi$ over $\B_\rho$ with $|\Om| =|\B_\rho|.$ Setting $\hat{\xi} = \xi - \int_{\pa \B_\rho} \xi,$ we have
	\begin{equation}
		\lambda_1(\Omega) - \lambda_1(\B_\rho) = \frac{1}{2}\delta^2\lambda_1(\B_\rho)\big[\hat{\xi},\hat{\xi}\big] +\om\left(\|\hat{\xi}\|_{C^{2,\alpha}(\pa \B_\rho)}\right) \| \hat{\xi} \|_{H^{1/2}(\pa \B_\rho)}^2.
	\end{equation}
\end{theorem}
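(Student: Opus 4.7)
The plan is to obtain the expansion via a second-order Taylor analysis of the Rayleigh quotient in $\xi$, following the general strategy of \cite{BDV15} and \cite{dl19} adapted to the spherical geometry. First I will construct a $C^{2,\alpha}$-diffeomorphism $\Phi_\xi:\overline{\B_\rho}\to\overline{\Om}$ which in spherical coordinates takes the form $(\theta,\phi)\mapsto(\theta,\phi(1+\eta(\phi)\xi(\theta)))$, where $\eta:[0,\rho]\to[0,1]$ is a smooth cutoff equal to $1$ near $\phi=\rho$ and vanishing near the north pole. For $\|\xi\|_{C^{2,\alpha}}$ small (depending only on $n,\g$), $\Phi_\xi$ is a diffeomorphism, and using $u_{\B_\rho}\circ\Phi_\xi^{-1}$ as a test function for $\lambda_1(\Om)$ together with $u_\Om\circ\Phi_\xi$ as a test function for $\lambda_1(\B_\rho)$ gives matching upper and lower bounds on $\lambda_1(\Om)-\lambda_1(\B_\rho)$. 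After pulling back, the deficit is represented as an explicit combination of integrals on the fixed domain $\B_\rho$ involving $u_{\B_\rho}$, $\na u_{\B_\rho}$, the Jacobian $\det D\Phi_\xi$, and the transported inverse metric $(D\Phi_\xi)^{-T}g_{S^n}(D\Phi_\xi)^{-1}$.

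Next I will Taylor expand these geometric quantities through second order in $\xi$. Writing $c=\fint_{\pa\B_\rho}\xi$ and $\hat\xi=\xi-c$, the linear-in-$\xi$ contribution to the Rayleigh-quotient expansion is a constant multiple of $|u_\rho'|^2\int_{\pa\B_\rho}\xi$ (this is the first variation of $\lambda_1$ at $\B_\rho$), while the volume constraint $|\Om|=|\B_\rho|$ expanded to second order in spherical coordinates forces $\int_{\pa\B_\rho}\xi$ to equal a quadratic form in $\hat\xi$ up to cubic errors. Substituting this relation eliminates the linear term, and the resulting quadratic contribution must be matched with $\tfrac12\delta^2\lambda_1(\B_\rho)[\hat\xi,\hat\xi]$ as in \eqref{eqn: second variation}. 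Concretely, introducing the corrector $w_{\hat\xi}$ from \eqref{eqn: eigenvalue pb extension} and integrating by parts recasts the bulk quadratic integrals as $\int_{\B_\rho}(|\na w_{\hat\xi}|^2-\lambda_\rho w_{\hat\xi}^2)$, while the second-order expansion of the spherical volume element and of the induced metric on $\pa\B_\rho$ generates the boundary term $\mathcal H_{\B_\rho}|u_\rho'|^2\int_{\pa\B_\rho}\hat\xi^2$.

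The hard part will be upgrading the naive error $O(\|\xi\|_{C^{2,\alpha}}^3)$ to one of the form $\om(\|\hat\xi\|_{C^{2,\alpha}})\|\hat\xi\|_{H^{1/2}(\pa\B_\rho)}^2$. Each remainder arising from the Taylor expansion has the schematic form $A(\xi)\cdot B(\xi)$, where $A$ is a scalar bounded by some positive power of $\|\xi\|_{C^{2,\alpha}}$ (this power absorbs into the modulus $\om$), and $B$ is a quadratic expression in $\xi$. To bound $B$ by $\|\hat\xi\|_{H^{1/2}}^2$, I will rewrite quadratic boundary terms via the identity $\|\hat\xi\|_{\dot H^{1/2}}^2=\int_{\B_\rho}|\na h_{\hat\xi}|^2$ from \eqref{eqn: harmonic extension}, and rewrite bulk quadratic terms via the elliptic estimate $\|w_{\hat\xi}\|_{H^1(\B_\rho)}\lesssim\|\hat\xi\|_{H^{1/2}(\pa\B_\rho)}$. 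Finally, the contributions from the shift $\xi-\hat\xi=c$ are of size $c^2|\pa\B_\rho|+c\|\hat\xi\|_{L^2}|\pa\B_\rho|^{1/2}$, which the quadratic volume relation above converts into $O(\|\hat\xi\|_{L^2}^2)$ with a small prefactor, fitting inside the error budget. The confinement $|\rho-\pi/2|<\g$ ensures uniform control on $u_\rho$, $|u_\rho'|$, and $\mathcal H_{\B_\rho}$ throughout, yielding a modulus depending only on $n$ and $\g$.
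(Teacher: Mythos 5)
First, note that the paper does not actually prove this theorem: it invokes \cite[Theorem 1]{d02} (and \cite[Theorem 1.4]{dl19}), stating that the adaptation from the Euclidean to the spherical/hyperbolic setting involves only technical modifications, which are omitted. Your sketch follows the same general strategy as those references (expand $\lambda_1$ along a family of domains, kill the first variation using the volume constraint, identify the quadratic term with \eqref{eqn: second variation}, and control the remainder), so in spirit you are reconstructing the cited proof rather than taking a different route.

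However, as written there is a genuine gap, and it sits exactly at the point you label ``the hard part.'' The assertion that every Taylor remainder factors as $A(\xi)\cdot B(\xi)$ with $A$ small in $C^{2,\alpha}$ and $B$ a quadratic quantity controlled by $\|\hat\xi\|_{H^{1/2}(\pa \B_\rho)}^2$ is not something that falls out of a naive expansion of the pulled-back Rayleigh quotient; it is the two-norm discrepancy that constitutes the entire content of Dambrine's theorem (continuity of the shape Hessian along the deformation, measured against the $H^{1/2}$ norm of the perturbation, uniformly over a $C^{2,\alpha}$ neighborhood). Asserting the factorization schematically, without exhibiting it for the actual remainders (which involve second derivatives of the eigenvalue at intermediate domains, or equivalently differences of shape Hessians), assumes the conclusion. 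A second, related problem is the test-function swap: plugging the transported eigenfunction $u_{\B_\rho}\circ\Phi_\xi^{-1}$ into the Rayleigh quotient for $\Om$ produces at second order a quadratic form that is in general strictly larger than $\tfrac12\delta^2\lambda_1(\B_\rho)[\hat\xi,\hat\xi]$, because the true expansion optimizes the eigenfunction at first order; the corrector $w_{\hat\xi}$ of \eqref{eqn: eigenvalue pb extension} does not appear by ``integrating by parts'' but must be built into the test function (or obtained by differentiating the eigenfunction along the deformation). Symmetrically, the lower bound via $u_\Om\circ\Phi_\xi$ requires a quantitative first-order expansion $u_\Om\circ\Phi_\xi = u_{\B_\rho} + w_{\hat\xi} + $ error, with the error again controlled by $\om(\|\hat\xi\|_{C^{2,\alpha}})\|\hat\xi\|_{H^{1/2}}^2$ in $H^1$; your proposal does not address how to obtain this, and without it the upper and lower bounds do not match at the claimed precision. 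To close the argument you would either need to carry out the shape-Hessian continuity estimate of \cite{d02,dl19} in spherical coordinates, or simply cite those results and verify the (genuinely routine) modifications coming from the metric factors $\sin^{n-1}\phi$ and the curvature term $\mathcal{H}_{\B_\rho}$, which is what the paper does.
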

Theorem~\ref{lem: second var and deficit} was established in the Euclidean setting in \cite[Theorem 1]{d02} (see also \cite[Theorem 1.4]{dl19} for the $C^{2,\alpha}$ norm replaced by the $W^{2,p}$ norm for $p>n$). Adapting the proof to the spherical and hyperbolic settings requires only technical modifications, which we omit.

\begin{remark}\label{rmk: volume constraint}
	{\rm 
	For a function $\xi \in H^{1/2}(\pa \B_\rho)$ with nonzero mean, the Fredholm Alternative implies that no solution exists to the problem \eqref{eqn: eigenvalue pb extension} (even without the orthogonality constraint $\int_{\B_\rho} w_\xi u_{\B_\rho} =0$), and for this reason the second variation is only defined for mean zero functions.   However, for a nearly spherical set  $\Omega$, a Taylor expansion of the volume constraint  shows that 
\[
0 = |\Omega| - |\B_\rho| = \int_{\pa \B_{\rho}} \xi + o(\|\xi\|_{L^2(\pa \B_{\rho})});
\]
see, for instance, \cite{BDF17}. This implies that $\| \xi -\hat{\xi}\|_{L^2(\pa \B_\rho)} = o(\|\xi\|_{L^2(\pa \B_\rho)})$. In particular, $\| \xi\|_{H^{1/2}(\pa \B_\rho)} = \| \hat{\xi}\|_{H^{1/2}(\pa \B_\rho)} + o(\|\xi\|_{L^2(\pa \B_\rho)})$ and 
\[
\frac{1}{2}\|\xi\|_{C^{2,\alpha}(\pa \B_\rho)} \leq \|\hat{\xi}\|_{C^{2,\alpha}(\pa \B_\rho)} \leq 2\|\xi\|_{C^{2,\alpha}(\pa \B_\rho)}.
\] Therefore, the conclusion of Theorem~\ref{lem: second var and deficit} above may be equivalently written as 
	\begin{equation}
		\lambda_1(\Omega) - \lambda_1(\B_\rho) = \frac{1}{2}\delta^2\lambda_1(\B_\rho)\big[\hat{\xi},\hat{\xi}\big] +\om\left(\|\xi\|_{C^{2,\alpha}(\pa \B_\rho)}\right) \| \xi \|_{H^{1/2}(\pa \B_\rho)}^2.
	\end{equation}
	}
\end{remark}
 
\medskip

\subsubsection{The spectral gap}\label{ssec: gap}
We study the spectrum of the shifted Dirichlet-to-Neumann operator $L_{\rho}$ introduced in \eqref{eqn: def DtoN} and establish a spectral gap in Theorem~\ref{t:gap} below. In contrast to the Euclidean case, the spectrum cannot be computed explicitly and therefore the spectral analysis relies on implicit methods based on the maximum principle.

We let $\{Y_i\}_{i=0}^\infty$ be the orthonormal basis for $L^2(\pa \B_\rho) $ of spherical harmonics, i.e. solutions to the eigenvalue equation  $\Delta_{S^{n-1}} Y_i= -\mu_i Y_i$, or equivalently $  \Delta_{\pa \B_\rho} Y_i = -\sin^{-2}(\rho)\mu_i Y_i$, on $\pa \B_\rho$ with each $Y_i$ normalized so that $\int_{\pa \B_\rho} Y_i^2 =1.$ The eigenfunctions are ordered such that $\mu_i \leq \mu_{i+1}$ for all $i$.  Recall that
\begin{equation}\label{eqn: laplace eigenvalues}
\begin{split}
 \mu_0 &= 0\\
 \mu_1 & =\dots = \mu_{n} = (n-1)\\
 \mu_{n+1} &= 2n.	
\end{split}
 \end{equation}
 The unique spherical harmonic corresponding to $\mu_0=0$ is constant, and thus for each $i\geq 1$ we have
     \begin{equation}  \label{e:fredholm}
   \int_{\pa \B_\rho} Y_i =0.
  \end{equation}

We let $w_i$ denote the solution of \eqref{eqn: eigenvalue pb extension} with $\xi = Y_i$. That is, $w_i\in H^1(\B_\rho)\cap C(\B_{\rho})$  is the unique solution to
  \begin{equation}\label{eqn: eigenvalue pb extension Yi}
	\begin{cases}
		-\Delta w_i = \lambda_{\rho} w_i & \text{ in } \B_{\rho}\\
		\ \ \ \ w_i = |u_\rho'|Y_i & \text{ on } \pa \B_{\rho}\\
		\int_{\B_\rho} w_i u_{\B_\rho}  = 0.
	\end{cases}
\end{equation}
 The following lemma shows that the solutions $w_i$ take a particular form that will be crucial to our spectral analysis.
\begin{lemma}\label{lem: efns}
	For each $i\geq 1$, the unique solution $w_i$ of \eqref{eqn: eigenvalue pb extension Yi} takes the form
	\begin{equation}
		\label{eqn: wi form}
		w_i(\phi, \theta)=Y_i(\theta)g_i(\phi)
	\end{equation}
	in spherical coordinates, and $g_i$ satisfies the following three properties:
 \[
  \begin{aligned} 
    &(1) \quad g_i(0)=0 \\
    &(2) \quad g_i(\rho) =|u_\rho'|\\
    &(3) \quad 
    g_i(\phi)>0 \text{ for } 0<\phi <\rho.
   \end{aligned}
 \]
 \end{lemma}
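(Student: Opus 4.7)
The plan is to construct $g_i$ directly by separation of variables. Inserting the ansatz $w = Y_i(\theta) g(\phi)$ into the equation $-\Delta w = \lambda_\rho w$ and applying the formula \eqref{eqn: laplace in spherical} for the spherical Laplacian together with $\Delta_{S^{n-1}} Y_i = -\mu_i Y_i$, I obtain the singular Sturm--Liouville ODE
\begin{equation*}
-(\sin\phi)^{1-n}\frac{d}{d\phi}\!\left((\sin\phi)^{n-1} g'(\phi)\right) + \frac{\mu_i}{\sin^2\phi}\, g(\phi) = \lambda_\rho\, g(\phi), \qquad \phi \in (0, \rho).
\end{equation*}
The boundary condition $w_i|_{\partial\B_\rho} = |u_\rho'| Y_i$ forces $g_i(\rho) = |u_\rho'|$, which gives property (2). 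Property (1), $g_i(0) = 0$, is also immediate: spherical coordinates degenerate at the north pole, so continuity of $w_i$ there requires $g_i(0) Y_i(\theta)$ to be independent of $\theta \in S^{n-1}$; since $Y_i$ is nonconstant for $i \geq 1$ by \eqref{eqn: laplace eigenvalues}, this forces $g_i(0) = 0$.

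Next I verify that the product $Y_i(\theta) g_i(\phi)$ actually realizes the unique solution to \eqref{eqn: eigenvalue pb extension Yi}. The orthogonality constraint is automatic: since $u_{\B_\rho}$ is radial, the integral $\int_{\B_\rho} w_i\, u_{\B_\rho}$ factors into a radial integral times $\int_{S^{n-1}} Y_i$, and the angular factor vanishes by \eqref{e:fredholm}. Existence and uniqueness of $g_i$ as a solution to the ODE with $g_i(0) = 0$ and $g_i(\rho) = |u_\rho'|$ follow from a spectral gap observation: letting $\Lambda_i$ denote the first Dirichlet eigenvalue of the left-hand side ODE operator $L_i$ on $(0,\rho)$, the number $\Lambda_i$ equals the smallest Dirichlet eigenvalue of $-\Delta$ on $\B_\rho$ among separated functions in the $Y_i$ angular sector. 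This strictly exceeds $\lambda_\rho$ because the ground state $u_{\B_\rho}$ is radial and hence lies entirely in the $Y_0$ sector, making any eigenfunction in a different sector $L^2$-orthogonal to the ground state and thus associated with a strictly larger eigenvalue. Hence $\lambda_\rho \neq \Lambda_j$ for any $j$, the Fredholm Alternative applies on each sector, and $w_i = Y_i g_i$ is the unique solution.

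The heart of the argument, and the main obstacle, is property (3): strict positivity of $g_i$ on $(0, \rho)$. The idea is to exploit the same spectral gap via a maximum principle. Rewrite the ODE as $(L_i - \lambda_\rho) g_i = 0$; the principal Dirichlet eigenvalue of $L_i - \lambda_\rho$ on $(0, \rho)$ is $\Lambda_i - \lambda_\rho > 0$, so this shifted operator satisfies the weak maximum principle: any solution with nonnegative boundary values is nonnegative throughout. Since $g_i(0) = 0$ and $g_i(\rho) = |u_\rho'| > 0$, this gives $g_i \geq 0$ on $(0, \rho)$. Finally, after clearing the weight $\sin^{n-1}\phi$ to put the equation in standard second-order form, the strong maximum principle (equivalently, the Sturm comparison theorem) rules out an interior zero of the nontrivial nonnegative solution $g_i$, yielding $g_i > 0$ on $(0, \rho)$ and completing the proof.
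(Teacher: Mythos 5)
Your proof is correct in substance and, for the key positivity property (3), takes a genuinely different route from the paper. After separating variables exactly as you do, the paper identifies $g_i$ (via $t=\cos\phi$) with an associated Legendre function and then proves (3) by a short domain-monotonicity argument: if $g_i(\rho_1)=0$ for some $\rho_1\in(0,\rho)$, then $w_i$ restricted to $\B_{\rho_1}$ lies in $H^1_0(\B_{\rho_1})$ and has Rayleigh quotient exactly $\lambda_\rho$, so $\lambda_1(\B_{\rho_1})\le\lambda_\rho$, contradicting $\lambda_\rho<\lambda_1(\B_{\rho_1})$ from strict monotonicity under set inclusion; positivity then follows since $g_i(\rho)=|u_\rho'|>0$. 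You instead stay on the ODE side and deduce nonnegativity from a maximum principle for $L_i-\lambda_\rho$, justified by the positivity of the principal eigenvalue of the sector operator (which, as you note, follows from radiality and simplicity of the ground state on $\B_\rho$). Both routes rest on the same spectral fact, but the paper's version is more elementary in that it never needs a maximum principle for the singular operator. The one step in your write-up that is asserted rather than proved is exactly that point: the standard ``positive principal eigenvalue implies weak maximum principle'' theorems assume bounded coefficients on a bounded domain, whereas here the drift $(n-1)\cot\phi$ and the potential $\mu_i\sin^{-2}\phi$ blow up at $\phi=0$, and truncating to $(\epsilon,\rho)$ does not immediately work because the sign of $g_i(\epsilon)$ is unknown. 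This is fixable along the lines you already suggest: testing the weighted equation against $g_i^-$ (which vanishes at both endpoints) shows that $Y_i\,g_i^-$ would be an $H^1_0(\B_\rho)$ function orthogonal to $u_{\B_\rho}$ with Rayleigh quotient $\lambda_\rho<\lambda_2(\B_\rho)$, forcing $g_i^-\equiv0$; alternatively one can simply use the paper's restriction argument. Your treatment of (1), (2), the orthogonality constraint, and existence/uniqueness in the $Y_i$ sector (coercivity/Fredholm, then invoking the uniqueness of the solution of \eqref{eqn: eigenvalue pb extension Yi} already recorded in the paper) is sound and parallels the paper; also, your last step can be streamlined, since at an interior zero of a nonnegative solution one has $g_i=g_i'=0$ and ODE uniqueness alone gives $g_i\equiv0$, with no need for Sturm comparison.
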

\begin{proof}
Fix $i \geq 1$ and the corresponding spherical harmonic $Y_i$ and solution $w_i$ of \eqref{eqn: eigenvalue pb extension Yi}. We omit the subscript $i$ in the remainder of this proof for notational simplicity.
 Let us make the ansatz that $w$ takes the form \eqref{eqn: wi form} in spherical coordinates. Then $w(\theta, \rho) =Y(\theta) g(\rho)$ on $\pa \B_\rho$, so $w$ satisfies the boundary condition in \eqref{eqn: eigenvalue pb extension Yi} provided $g(\rho)=|u_\rho'|.$ Furthermore, thanks to the orthogonality \eqref{e:fredholm}, we have
\begin{align*}
\int_{\B_\rho } w\, u_{\B_{\rho}} & = \int_0^{\rho} \left( \int_{S^{n-1}} Y(\theta)\,d\mathcal{H}^{n-1}(\theta)\right) u_\rho(\phi) g(\phi)\, \sin(\phi)^{n-1} \, d\phi =0,
\end{align*}
so $w$ satisfies the orthogonality condition in \eqref{eqn: eigenvalue pb extension Yi}.  Finally, from the expression \eqref{eqn: laplace in spherical}
for the Laplacian of a function $f$ in spherical coordinates $(\theta,\phi)$, we see that  $g(\phi)$ satisfies the ordinary differential equation
\begin{equation}  \label{e:legendre}
 \frac{d^2 g}{d \phi^2} + (n-1) \frac{\cos \phi}{\sin \phi} \frac{d g}{d \phi} + \left(\lambda_{\rho} - \frac{\mu}{\sin^2 \phi} \right)g =0.
\end{equation}
Under the change of variable $t=\cos \phi$, the solution $g$ is a solution to the associated Legendre equation, which are well-known special functions.
Thus, up to choosing a constant multiple of $g$ so that the boundary condition is satisfied, this function is the unique $H^1(\B_\rho)$ solution of \eqref{eqn: eigenvalue pb extension}.

Next we verify properties (1)--(3). Property (2) holds by our choice of constant multiple of $g$. As a solution of \eqref{eqn: eigenvalue pb extension Yi}, $w$ is nonconstant and continuous, so it follows that $g(0)=0$ and thus $(1)$ holds. Finally, to show (3), we suppose by way of contradiction that $g(\rho_1)=0$ for some $0<\rho_1<\rho$. Then we see that $w\in H^1_0(\B_{\rho_1})$ solves 
  \[
    \frac{\int_{\B_{\rho_1}} |\nabla w|^2}{ \int_{\B_{\rho_1}} w^2} = \lambda_{\rho}.
  \] 
  Thus, $\lambda_{\rho_1} \leq \lambda_{\rho}$; however, since $\B_{\rho_1} \subset \B_{\rho}$ we have that $\lambda_{\rho} < \lambda_{\rho_1}$. This gives a contradiction and we see that $g$ is nonvanishing in $(0,\rho)$. Together with (2) this proves (3).
  \end{proof}
Using Lemma~\ref{lem: efns}, we show in the following corollary that the spherical harmonics for $i\geq 1$ are eigenfunctions of the Dirichlet-to-Neumann operator \eqref{eqn: def DtoN}.
\begin{corollary}\label{cor: eigenfunctions}
	Each spherical harmonic $Y_i$ for $i\geq 1$ is an eigenfunction of the Dirichlet-to-Neumann operator $L_{\rho}$ defined in \eqref{eqn: def DtoN} with eigenvalue $\eta_i$ given by
	\begin{equation}
		\label{eqn: eta def}
		\eta_i = g_i'(\rho)+\frac{1}{2}\mathcal{H}_{\B_\rho} |u_\rho'|^2,
	\end{equation}
where $g_i$ is the function given in Lemma~\ref{lem: efns}. If $\mu_i = \mu_{i+1}$, then $\eta_i = \eta_{i+1}$. Furthermore, $\eta_i \geq0 $ for all $ i \in \mathbb{N}$ and $\eta_i =0$ for $i=1,\dots , n$. 
\end{corollary}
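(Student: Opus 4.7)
The plan is to verify each of the four assertions in turn, relying on Lemma~\ref{lem: efns}, ODE uniqueness, and the extremality of balls in Sperner's inequality.

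The eigenvector property is a direct calculation. Since the outer unit normal $\nu$ to $\partial\B_\rho$ coincides with $\partial_\phi$ and $w_i=Y_i(\theta)\,g_i(\phi)$ by Lemma~\ref{lem: efns}, we have $\partial_\nu w_i = Y_i\,g_i'(\rho)$ along $\partial\B_\rho$, and substituting into \eqref{eqn: def DtoN} gives $L_\rho Y_i = \eta_i Y_i$ with $\eta_i$ as in \eqref{eqn: eta def}. The degeneracy claim is equally immediate: the ODE \eqref{e:legendre} for $g_i$ and the boundary conditions $g_i(0)=0$, $g_i(\rho)=|u_\rho'|$ depend on $i$ only through $\mu_i$, so $\mu_i=\mu_{i+1}$ forces $g_i\equiv g_{i+1}$ by uniqueness for second-order linear ODEs with a regular singular point at the origin, and in particular $g_i'(\rho)=g_{i+1}'(\rho)$.

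To prove $\eta_i=0$ for $i=1,\ldots,n$ I would exploit the rotation invariance of $\lambda_1$. The first spherical harmonics on $S^{n-1}$ (those with $\mu_i=n-1$) are, up to normalization, the coordinate functions $\theta_j$, and these arise to first order as the normal-graph perturbations of $\partial\B_\rho$ generated by one-parameter families of rotations of $S^n$ that displace the north pole. Since every such rotation carries $\B_\rho$ isometrically onto a geodesic ball of the same radius, $t\mapsto\lambda_1(\B_\rho^t)$ is constantly equal to $\lambda_\rho$. Applying the expansion of Theorem~\ref{lem: second var and deficit} to this family and dividing by $t^2$ as $t\to 0$ forces $\delta^2\lambda_1(\B_\rho)[Y_i,Y_i]=0$; a short integration-by-parts computation using $-\Delta w_i=\lambda_\rho w_i$ and $w_i|_{\partial\B_\rho}=|u_\rho'|Y_i$ identifies this second-variation quantity as a strictly positive multiple of $\eta_i$, so the vanishing transfers to $\eta_i=0$.

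Finally, for $\eta_i\geq 0$ with arbitrary $i\geq 1$ I would invoke Sperner's inequality. Given $Y_i$, construct a volume-preserving normal-graph perturbation with boundary $\{(\theta,(1+tY_i(\theta)+c(t))\rho)\}$, where the constant $c(t)=O(t^2)$ is chosen so that $|\B_\rho^t|=|\B_\rho|$ exactly. Sperner gives $\lambda_1(\B_\rho^t)\geq\lambda_\rho$, and Theorem~\ref{lem: second var and deficit} then yields $\delta^2\lambda_1(\B_\rho)[Y_i,Y_i]\geq 0$, whence $\eta_i\geq 0$. The main obstacle in the proof is that this correction $c(t)$ lies in the mean of the perturbation and thus strictly outside the domain of the second-variation quadratic form; however, because $c(t)$ enters only at order $t^2$ and constants do not affect the leading-order second variation on mean-zero directions, this issue is easily dispatched by revisiting the derivation of Theorem~\ref{lem: second var and deficit}. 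The rest of the argument is a matter of bookkeeping.
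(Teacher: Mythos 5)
Your proposal is correct and follows essentially the same route as the paper: the direct computation of $L_\rho Y_i$ from Lemma~\ref{lem: efns}, ODE uniqueness for the degeneracy claim, a volume-preserving graph perturbation combined with Sperner's inequality and Theorem~\ref{lem: second var and deficit} for $\eta_i\geq 0$, and the rotation-invariance of $\lambda_1$ of geodesic balls for $\eta_i=0$, $i=1,\dots,n$. Your worry about the constant correction $c(t)$ is not an issue, since Theorem~\ref{lem: second var and deficit} is already stated in terms of $\hat\xi=\xi-\fint_{\pa\B_\rho}\xi$, which is exactly how the paper handles the added multiple of $Y_0$.
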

\begin{proof}
Making use of Lemma~\ref{lem: efns}, we compute that $L_{\rho}Y_i = g_i'(\rho) Y_i + \frac{1}{2}\mathcal{H}_{\B_\rho} |u_\rho'|^2Y_i $ where $g_i$ is the solution to \eqref{e:legendre} found in Lemma~\ref{lem: efns}. In particular,  $Y_i$ is an eigenfunction of $L_{\rho}$ with eigenvalue $\eta_i$ as in \eqref{eqn: eta def}. If $\mu_i = \mu_{i+1}$, then we see from \eqref{e:legendre} that $g_i = g_{i+1}$ and thus $\eta_i = \eta_{i+1}$ by \eqref{eqn: eta def}.

The next two claims follow from  geometric considerations. Suppose by way of contradiction that $\eta_i<0$ for some $i\geq 1$. For $t>0$ and small, let $\Om_t $ be the set parametrized by $\xi_t$ over $\B_\rho$ where $\xi_t  = tY_i + s(t) Y_0$ and $s(t)$ is the function defined such that $|\Om_t|= |\B_\rho|$.  By Sperner's inequality and Theorem~\ref{lem: second var and deficit},
\begin{align*}
	0 \leq \lambda_1(\Omega_t) - \lambda_1(\B_\rho) &= t^2 \eta_i + t^3\om\left (\| Y_i\|_{C^{2,\alpha}(\pa \B_\rho)}\right)\|Y_i\|_{H^{1/2}(\pa \B_\rho)}^2 \leq \frac{t^2}{2}\eta_i <0,
\end{align*}
where the penultimate inequality holds for $t$ sufficiently small depending on $Y_i$. We reach a contradiction and conclude that $\eta_i \geq 0$ for all $i\geq 1$.

We sketch the proof that $\eta_i = 0$ for $i=1,\dots, n$. Note that $\eta_1=\dots= \eta_{n-1}$ by \eqref{eqn: laplace eigenvalues} so it suffices to show that $\eta_1=0$. Let $e_1 \dots, e_{n+1} $ be an orthonormal basis for $\R^{n+1}$. Consider the standard embedding of $S^{n}$ in $\R^{n+1}$ such that the north pole is $e_{n+1}$. The spherical harmonic $Y= Y_1$ is  the restriction to $\pa B_\rho$ of the linear function $x \cdot e_1$ on $\R^{n+1}$. 

For $t \in (-\e ,\e)$, let $x_t = \exp(te_1)$ and let $\Omega_t = B(x_t, \rho)$.  For $t$ sufficiently small, there is a smooth and uniquely defined one-parameter family of functions $\xi_t$ such that 
\[
\pa \Om_t  = \{ (\theta, (1+ \xi_t(\theta))\rho ) \ : \ (\theta , \rho) \in \pa B_\rho\}
\] 
in spherical coordinates. Moreover, for $t$ sufficiently small, $\xi_t  = \sum_{i=0}^\infty a_i(t) Y_i $
with 
\[
a_1(t) =t/\rho, \qquad \text{ and } \qquad \frac{d}{dt}\big|_{t=0} a_i(t) = \lim_{t\to 0} a_i(t) /t=  0 \text{ for } i \neq 1.
\]

Because $\lambda_1(\Omega_t) =\lambda_1(B_\rho)$ for all $t$, we have, in particular, 
\begin{equation}\label{eqn: 2nd der} \frac{d^2}{dt^2} \lambda_1 (\Omega_t) =0	
\end{equation}
 for all $t$ sufficiently small.
The expression for second variation at $\Om_t$ changes smoothly with respect to $t$. So, dividing \eqref{eqn: 2nd der} by $t^2$ and letting $t \to 0$, we use the expression  \eqref{eqn: second variation} to arrive at 
\[
0 = \lim_{t \to 0}\  \frac{1}{t^2} \left(\frac{d^2}{dt^2} \lambda_1(\Omega_t)\right) = \lim_{t \to 0} \ \sum_{i=1}^\infty  \eta_i \left(\frac{a_i(t)}{t}\right)^2 = \eta_1.
\]
This concludes the proof of the corollary.
%
%
%
\end{proof}

Corollary~\ref{cor: eigenfunctions} shows that $Y_1,\dots, Y_{n}\in \ker L_\rho.$
The following theorem is the key point in our spectral analysis: we establish a spectral gap to show that the kernel of $L_\rho$ consists {\it only} of the span of $\{Y_i\}_{i=1}^{n}$.

\begin{theorem}[Spectral gap]  \label{t:gap} Fix $n\geq 2$ and $\rho \in (0,\pi)$.
Let $\{\eta_i\}_{i=1}^\infty$ be the eigenvalues of $L_\rho$ given in \eqref{eqn: eta def}.  Then $\eta_{n+1}>\eta_{n}=0$ and so 
\[
\eta_i \geq \eta_{n+1} >0 \qquad \text{ for all }i \geq n+1.
\]
 Furthermore, given any $\g \in (0,\pi/2)$, there exists $\bar \eta$ depending only on $n$ and $\g$ such that $\eta_{n+1} \geq \bar \eta $ for $|\rho-\pi/2|\leq \g$.
 \end{theorem}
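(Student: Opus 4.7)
The plan is to establish the eigenvalue inequality $\eta_{n+1} > \eta_n = 0$, together with $\eta_i \geq \eta_{n+1}$ for $i \geq n+1$, via a classical Sturm--Liouville / Wronskian comparison applied to the ODE \eqref{e:legendre} satisfied by the $g_i$ of Lemma~\ref{lem: efns}. Since Corollary~\ref{cor: eigenfunctions} already gives $\eta_n = 0$ and shows that $\eta_i$ depends on the spherical-harmonic index only through $\mu_i$, the problem reduces to showing that $g_\mu'(\rho)$ is \emph{strictly increasing} in $\mu$, where $g_\mu$ denotes the unique positive solution of \eqref{e:legendre} on $(0,\rho)$ with $g_\mu(0) = 0$ and $g_\mu(\rho) = |u_\rho'|$.

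First I rewrite \eqref{e:legendre} in self-adjoint form by multiplying through by $p(\phi) := \sin^{n-1}\phi$, obtaining $(p g')' + (\lambda_\rho p - \mu r) g = 0$ with weight $r(\phi) := \sin^{n-3}\phi$. Applied to $g_n$ (with $\mu = \mu_n = n-1$) and $g_{n+1}$ (with $\mu = \mu_{n+1} = 2n$), both positive on $(0,\rho)$ and both equal to $|u_\rho'|$ at $\phi = \rho$, the Wronskian-type quantity $W(\phi) := p(\phi)\bigl(g_n(\phi) g_{n+1}'(\phi) - g_n'(\phi) g_{n+1}(\phi)\bigr)$ satisfies
\[
W'(\phi) = (\mu_{n+1} - \mu_n)\, r(\phi)\, g_n(\phi)\, g_{n+1}(\phi).
\]
Integrating from $0$ to $\rho$ and using $g_n(\rho) = g_{n+1}(\rho) = |u_\rho'|$, together with the vanishing of $W(0^+)$ (verified via the indicial equation $\alpha^2 + (n-2)\alpha - \mu = 0$, which gives $g_n \sim c\,\phi$ and $g_{n+1} \sim c\,\phi^2$ near $0$, hence $pW = O(\phi^{n+1}) \to 0$), yields
\[
p(\rho)\, |u_\rho'| \,\bigl(g_{n+1}'(\rho) - g_n'(\rho)\bigr) = (\mu_{n+1} - \mu_n)\int_0^\rho r(\phi)\, g_n(\phi)\, g_{n+1}(\phi)\, d\phi > 0.
\]
Since the zero-order contributions in \eqref{eqn: eta def} are the same for $\eta_n$ and $\eta_{n+1}$, one concludes $\eta_{n+1} - \eta_n = g_{n+1}'(\rho) - g_n'(\rho) > 0$, so $\eta_{n+1} > 0$. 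Applying the same comparison to $g_{n+1}$ and $g_i$ for any $i \geq n+1$, using $\mu_i \geq \mu_{n+1}$, delivers $\eta_i \geq \eta_{n+1}$.

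For the uniform lower bound on $\{|\rho - \pi/2| \leq \gamma\}$, I appeal to continuity and compactness: the quantities $\lambda_\rho$, $|u_\rho'|$, and the solutions $g_n(\,\cdot\,; \rho), g_{n+1}(\,\cdot\,; \rho)$ depend continuously on $\rho$, so the displayed Wronskian identity presents $\eta_{n+1}(\rho)$ as a continuous, strictly positive function on the compact interval $[\pi/2 - \gamma, \pi/2 + \gamma]$, and therefore attains a positive minimum $\bar\eta = \bar\eta(n,\gamma)$. The main technical point requiring care is the vanishing of the Wronskian at the regular singular point $\phi = 0$ (especially for low $n$, where the weight $r = \sin^{n-3}\phi$ is itself singular), but this is controlled once the explicit leading-order behavior of $g_n$ and $g_{n+1}$ near $0$ is in hand. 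Everything else is a bookkeeping exercise within the Sturm--Liouville framework.
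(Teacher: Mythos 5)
Your proof is correct, but it takes a genuinely different route from the paper. The paper works with the difference $h=g_{n}-g_{n+1}$, observes it is a strict supersolution of the equation for $g_n$, and then must fight the fact that the zero-order coefficient $\lambda_\rho-\mu_{n+1}\sin^{-2}\phi$ is not sign-definite for small $\rho$: this forces a connectedness argument in $\rho$ (the set $A$ of radii where $h_\rho>0$ is shown to be open and closed), plus a separate Hopf-type boundary argument with an explicit integrating factor to get $h'(\rho)<0$. Your Sturm--Liouville/Wronskian identity bypasses both difficulties: after writing \eqref{e:legendre} in the form $(pg')'+(\lambda_\rho p-\mu r)g=0$ with $p=\sin^{n-1}\phi$, $r=\sin^{n-3}\phi$, the sign of $g_{n+1}'(\rho)-g_n'(\rho)$ is read off directly from $\mu_{n+1}>\mu_n$ and the positivity of $g_n,g_{n+1}$ on $(0,\rho)$ (Lemma~\ref{lem: efns}(3)), with no case distinction in $\rho$ and no maximum principle; moreover the same computation applied to any pair $\mu_{n+1}\le\mu_i$ gives $\eta_i\ge\eta_{n+1}$ at once, a point the paper leaves implicit. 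The price is the analysis at the regular singular point $\phi=0$, which you handle correctly via the indicial exponents ($g_n\sim c\phi$, $g_{n+1}\sim c\phi^2$, so $W=O(\phi^{n+1})\to0$ and the weighted integral converges even for $n=2$); to be fully precise you should note that $g_i$ is a nonzero multiple of the Frobenius branch with the larger exponent because $w_i=Y_ig_i$ is bounded (continuous) near the pole, which is exactly what Lemma~\ref{lem: efns} supplies, and your ``$pW=O(\phi^{n+1})$'' should read $W=O(\phi^{n+1})$ since $W$ already carries the factor $p$. Since the zero-order terms in \eqref{eqn: eta def} cancel in the difference, this yields $\eta_{n+1}>\eta_n=0$ (the value $\eta_n=0$ coming from Corollary~\ref{cor: eigenfunctions}, as in the paper), and your continuity-plus-compactness argument for the uniform bound $\bar\eta(n,\gamma)$ on $|\rho-\pi/2|\le\gamma$ is at the same level of detail as the paper's own closing remark.
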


\begin{proof}[Proof of Theorem~\ref{t:gap}]
 As noted in Corollary~\ref{cor: eigenfunctions}, if $\mu_i=\mu_{i+1}$, then $\eta_i = \eta_{i+1}$.
 Let $w_i = Y_i(\theta) g_i(\phi)$ be the solution to \eqref{eqn: eigenvalue pb extension}. We consider $h=g_{n} - g_{n+1}$.   From properties (1) and (2) in Lemma~\ref{lem: efns},   we have that $h(\rho)=h(0)=0$. Furthermore, from \eqref{e:legendre} and property (3) of Lemma~\ref{lem: efns}, we see that $h$ is a strict supersolution to the equation that $g_{n}$ satisfies. That is,
  \begin{equation} \label{e:supersolt}
   \frac{d^2 h}{d \phi^2} + (n-1) \frac{\cos \phi}{\sin \phi} \frac{d h}{d \phi} + \left(\lambda_{\rho} - \frac{\mu_{n}}{\sin^2 \phi} \right)h = 
    \frac{\mu_{n} - \mu_{n+1}}{\sin^2 \phi} g_{n-1}(\phi) <  0.
 \end{equation}
 We will show that $h>0$ for $\phi \in (0,\rho)$ and $h'(\rho)>0$. Notice that in  the case of the hemisphere $\rho=\pi/2$, we have  $\lambda_{\rho}=n$
 and $\mu_{n+1}=2n$ (recall \eqref{eqn: laplace eigenvalues}), hence  the zero-order term 
 $(\lambda_{\pi/2} - \mu_{n+1} \sin^{-2}(\phi))\leq 0$ for all $\phi$. Moreover, there exists $\epsilon$ depending on dimension $n$, such that for  $\rho \in (\pi/2-\epsilon, \pi)$, the zero-order term 
 $(\lambda_{\rho} - \mu_{n+1} \sin^{-2}(\phi))\leq 0$ for all $\phi.$ 
  From $\eqref{e:supersolt}$ we have $h_{\rho}$ is a strict supersolution, and from the comparison principle we conclude that $h>0$, and so $g_{n}< g_{n+1}$ on $(0,\rho)$. Furthermore, from the Hopf principle we conclude that $h'(\rho)<0$. 
 
 We will now show the same two properties for any $\rho \in (0,\pi)$. If
  $h_{\rho}$ is the function associated with $\rho$, then define the set  
 \[
 A:= \{\rho \in (0,\pi) \mid h_{\rho}>0 \text{ in }  (0,\rho)\}.
 \]
 We will show $A$ is both relatively open and closed in $(0,\pi)$. We have shown already that $A \supset (\pi/2-\epsilon, \pi)$, and therefore $A$ is nonempty. Since $h_{\rho}$ is the difference of two 
 Legendre functions defined on the whole interval $(0,\pi)$, then $h_{\rho}$ varies continuously with $\rho$ in $C^2$ on $[0,\phi_0]$ for any $\phi_0 < \pi$.  
 Suppose  $\rho_k \in A$ and $\rho_k \to \rho \in (0,\pi)$. Then by uniform convergence $h_{\rho}\geq 0$. If $h_{\rho}(\phi)=0$, then $\phi$ is a local minimum, and so $h_{\rho}'(\phi)=0$ and $h_{\rho}''(\phi)\geq0$. But from \eqref{e:supersolt} we have $h_{\rho}''(\phi)<0$, which is a contradiction. Therefore, $h_{\rho}(\phi)>0$, and so $h_{\rho} \in A$. This proves that $A$ is closed.  
 
 To show $A$ is also open, we will use the fact (shown below) that if $h_{\rho} \geq 0$, then $h'(\rho)<0$. Suppose by way of contradiction that $\rho \in A$
 and there exists a sequence $\rho_k \to \rho$ and $\phi_k\in (0,\rho_k)$ such that $h_{\rho_k}(\phi_k)\leq 0$. Since $h_{\rho_k}$ converges uniformly to $h_{\rho}$, it follows that 
 for a subsequence, either $\phi_k \to \rho$ or $\phi_k \to 0$. If $\phi_k \to \rho$, then from the uniform convergence, there also exists a second sequence $\phi_{1,k} \to \rho$, such that 
 $h_{\rho_k}(\phi_{1,k})=0$. From the mean value theorem, there exists a third sequence $\phi_{2,k} \to \rho$, such that $h'_{\rho_k}(\phi_{2,k})=0$. From the $C^1$ convergence up to the boundary point $\rho$, we would then have that $h'_{\rho}(\rho)=0$ which is a contradiction. If instead $\phi_k \to 0$, then from the uniform convergence, there exists a second sequence of local minima $\phi_{1,k} \to 0$. But for $\phi$ close to zero, the zero-order term $(\lambda_{\rho_k} - \mu_{n+1}\sin^{-2}(\phi))\leq 0$, so from the comparison principle, we have a contradiction. We thus conclude that $A=(0,\pi)$.  
 
  We now show the claim that $h'(\rho)<0$. This claim is not only necessary for the proof above, but we will also use it to show the eigenvalue gap. We only need to consider the situation when $(\lambda_{\rho} - \mu_{n+1}\sin^{-2}(\rho))>0$ and necessarily $\rho \in (0,\pi/2 - \epsilon)$.  Under appropriate conditions one may still apply the Hopf principle; however, in our one-dimensional case, we give the following simpler argument. 
 
  Since $h\geq 0$ and the zero-order term  is positive in $(\rho - \delta, \rho)$ we have that 
  \begin{equation}\label{eqn: new eqn}
   h'' + (n-1) \cot \phi h' = \zeta(\phi)\leq 0 \quad \text{ for } \phi \in (\rho - \delta, \rho).  
  \end{equation}
Assume by way of contradiction that $h'(\rho)= 0$. Then applying the explicit representation formula for first order linear equations to \eqref{eqn: new eqn} together with the initial condition $h'(\rho)=0$, we have that 
  \[
   h'(\phi)= \frac{1}{p(\phi)} \int_{\rho}^\phi p(s) \zeta(s) \ ds \geq 0 \quad (\text{since } \zeta \leq 0 \text{ and } \phi < \rho),
  \]
   and where $p(s)>0$ is the integrating factor given by
  \[
   p(s) = \text{Exp}\left\{\int_{\rho}^{s} (n-1) \cot t  \ dt \right\}. 
  \]
  Then $h' \geq 0$ in $(\rho - \delta, \rho)$. Since $h\geq 0$, we must then have that $h'\equiv 0$, so that $h \equiv 0$ on $(\rho - \delta, \rho)$. But this is a contradiction since
  as already noted $h$ is a strict supersolution to the equation that $g_{n+1}$ satisfies. We must then conclude that $h'(\rho)<0$. 
  
   Consequently, we have 
 $g_{n+1}'(\rho)> g_{n}'(\rho)$ and thus $\eta_{n+1} >\eta_{n}$. In particular, $\eta_i \geq \eta_{n+1} >0$ for all $i\geq n+1$. 
Finally, from \eqref{e:legendre}, we see that $\eta_{n+1}$ depends continuously on $\rho$, and so $\eta_{n+1}=\eta_{n+1}(\rho) \geq \bar \eta$ for $\rho$ in any compact subset $ [\pi/2-\g, \pi/2+\g]$ of $(0,\pi)$.
\end{proof}

\smallskip
Theorem~\ref{t:gap} will allow us to easily show that the second variation controls the $L^2$ norm squared of any function $\xi \in H^{1/2}(\pa \B_\rho)$ that is orthogonal to $Y_0, \dots , Y_{n-1}$; see the proof of Corollary~\ref{cor: second var controls h1/2} below. In order to improve this $L^2$ control to $H^{1/2}$ control, we need the following lemma.
 
 \begin{lemma}\label{lem: l2 bound} Fix $n\geq 2$ and $\g\in (0,\pi/2)$. There exists a constant $C$ depending only on $n$ and $\g$ such that the following holds. Let $|\pi/2-\rho|\leq\g$ and let $\xi \in H^{1/2}(\pa \B_{\rho})$ with $\int_{\pa \B_\rho} \xi =0$. For the solution $w_\xi$  of \eqref{eqn: eigenvalue pb extension}, we have
	$\|w_\xi\|_{L^2(\B_{\rho})} \leq \| \xi\|_{L^2(\pa \B_\rho)}.$
\end{lemma}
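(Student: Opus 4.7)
The plan is to decompose $w_\xi$ into its harmonic extension plus a zero-boundary correction and handle each piece separately. Concretely, let $h$ denote the harmonic extension to $B_\rho$ of the boundary datum $|u_\rho'|\xi$, and set $v = w_\xi - h$. Then $v \in H^1_0(B_\rho)$ satisfies the inhomogeneous equation
\[
-\Delta v - \lambda_\rho v = \lambda_\rho h \quad \text{in } B_\rho.
\]
By the triangle inequality it will suffice to bound $\|h\|_{L^2(B_\rho)}$ and $\|v\|_{L^2(B_\rho)}$ separately by the $L^2$ norm of $\xi$ on the boundary.

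The main subtlety, and really the only delicate point, is that $\lambda_\rho = \Lambda_1(B_\rho)$ sits in the spectrum of the Dirichlet Laplacian, so the equation for $v$ is resonant and needs a compatibility condition on $h$. I would verify this condition by noting that $\xi$ has zero mean on $\partial B_\rho$: this forces the boundary datum of $h$ to have vanishing spherical average, and since the spherical average of a harmonic function on $B_\rho$ is itself a bounded radial harmonic function, hence constant, it must vanish identically. Consequently $\int_{B_\rho} h\, u_{B_\rho} = 0$, so the Fredholm compatibility condition is met, and the orthogonality in \eqref{eqn: eigenvalue pb extension} forces $\int_{B_\rho} v\, u_{B_\rho} = 0$ as well.

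Next I would expand $v$ in the $L^2$-orthonormal Dirichlet basis $\{\phi_k\}$ of $B_\rho$ with eigenvalues $\Lambda_1 = \lambda_\rho < \Lambda_2 \leq \dots$. Since $v \perp \phi_1 = u_{B_\rho}$, the expansion begins at $k=2$, and substituting into the PDE and pairing with $\phi_k$ yields the explicit coefficients $c_k = \lambda_\rho (\Lambda_k - \lambda_\rho)^{-1} \int_{B_\rho} h\,\phi_k$. Using Parseval and the spectral gap $\Lambda_2 > \lambda_\rho$ gives
\[
\|v\|_{L^2(B_\rho)} \leq \frac{\lambda_\rho}{\Lambda_2(B_\rho) - \lambda_\rho} \, \|h\|_{L^2(B_\rho)}.
\]
Both $\lambda_\rho$ and $\Lambda_2(B_\rho)$ depend continuously on $\rho$, and $\Lambda_2 > \Lambda_1$ strictly, so this constant is uniformly bounded for $|\rho - \pi/2| \leq \gamma$.

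For the harmonic part I would invoke the standard Poisson kernel estimate: writing $h(x) = \int_{\partial B_\rho} K(x,y)\, |u_\rho'| \xi(y)\, dy$, Jensen's inequality (since $\int K(x,y)\,dy = 1$) gives $h(x)^2 \leq |u_\rho'|^2 \int K(x,y) \xi(y)^2\, dy$, and integrating over $B_\rho$ and swapping the order of integration yields $\|h\|_{L^2(B_\rho)} \leq C(n,\rho)\,|u_\rho'|\,\|\xi\|_{L^2(\partial B_\rho)}$, with the constant controlled by $\sup_y \int_{B_\rho} K(x,y)\,dx$, itself continuous in $\rho$. Combining the two bounds and absorbing all $\rho$-dependent quantities into one constant $C = C(n,\gamma)$ completes the proof. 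The hypothesis $\int_{\partial B_\rho}\xi = 0$ enters only through the compatibility condition in the second paragraph; apart from that, the argument is a routine elliptic regularity computation.
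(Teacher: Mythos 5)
Your proof is correct, but it takes a genuinely different route from the paper's. The paper argues by duality: it introduces an auxiliary function $\vphi$ solving $(-\Delta-\lambda_\rho)\vphi=w_\xi$ with zero Dirichlet data and $\int_{\B_\rho}\vphi\, u_{\B_\rho}=0$ (Fredholm), multiplies by $w_\xi$ and integrates by parts twice to get $\int_{\B_\rho}w_\xi^2=-\int_{\pa\B_\rho}\pa_\nu\vphi\,w_\xi$, and then concludes with H\"older, the trace embedding, and the $H^2$ estimate $\|\vphi\|_{H^2(\B_\rho)}\le C\|w_\xi\|_{L^2(\B_\rho)}$; there the mean-zero hypothesis on $\xi$ enters only through the existence of $w_\xi$. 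You instead split $w_\xi=h+v$ with $h$ the harmonic extension of $|u_\rho'|\xi$ and $v\in H^1_0(\B_\rho)$ solving the resonant problem, check $\int_{\B_\rho}h\,u_{\B_\rho}=0$ from the zero mean of $\xi$ (equivalently, expand $\xi$ in spherical harmonics $Y_i$, $i\ge1$, and use $\int_{S^{n-1}}Y_i=0$ together with the radial symmetry of $u_{\B_\rho}$), so that $v\perp u_{\B_\rho}$ and the Dirichlet eigenfunction expansion gives $\|v\|_{L^2(\B_\rho)}\le \lambda_\rho(\Lambda_2(\B_\rho)-\lambda_\rho)^{-1}\|h\|_{L^2(\B_\rho)}$, while the Poisson kernel and Jensen give $\|h\|_{L^2(\B_\rho)}\le C\,|u_\rho'|\,\|\xi\|_{L^2(\pa\B_\rho)}$. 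Your route makes the role of the hypothesis $\int_{\pa\B_\rho}\xi=0$ and the mechanism (the simplicity of $\lambda_\rho$, i.e.\ the gap $\Lambda_2-\lambda_\rho$) completely explicit, at the cost of verifying uniformity in $\rho$ of three constants (the gap, the harmonic-extension bound, and $|u_\rho'|$), which you correctly handle by continuity on the compact range $|\rho-\pi/2|\le\gamma$; the paper's duality argument is shorter and needs only trace and $H^2$ estimates uniform in $\rho$. One small point to tighten: the boundedness of the spherical average of $h$ near the pole should be justified by interior smoothness of harmonic functions (or by observing that the singular radial harmonic solution has infinite Dirichlet energy), since $h$ itself need not be bounded for $\xi\in H^{1/2}$; with that remark your argument is complete.
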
  
\begin{proof}
Let $w=w_\xi$ for notational simplicity. 
Let $\vphi$ be the unique solution to the following auxiliary problem:
\[
\begin{cases}
	(-\Delta - \lambda_\rho) \vphi  = w & \qquad \text{ on } \B_\rho\\
	\vphi  =0 & \qquad \text{ on } \pa {\B_\rho}\\
	\int_{\B_\rho} \vphi u_{\B_\rho} = 0.
\end{cases}
\]
Such a solution exists and is unique by the Fredholm alternative. We multiply the equation by $w$ and integrate by parts twice to find
\begin{align*}
	\int_{\B_\rho} w^2 &= \int_{\B_\rho} w (-\Delta - \lambda_\rho) \vphi  \\
	&=  - \int_{\pa \B_\rho} \pa_\nu \vphi w  +\int_{\pa \B_\rho} \pa_\nu w \vphi+ \int_{\B_\rho} \vphi  (-\Delta - \lambda_\rho)w =  -\int_{\pa \B_\rho} \pa_\nu \vphi w.
\end{align*}
Then by H\"{o}lder's inequality and the trace embedding (see \cite[Theorem 5.22]{Adams}) respectively, we see that 
\[
\begin{aligned}
\int_{\B_\rho} w^2 &\leq \| \pa_\nu\vphi\|_{L^2(\pa \B_\rho)}|u'_\rho|\| \xi\|_{L^2(\pa \B_\rho)} \\
&\leq C \| \na \vphi\|_{H^{1}(\B_\rho)}|u'_\rho|\| \xi\|_{L^2(\pa \B_\rho)} \leq C\| \vphi\|_{H^2(\B_\rho)}|u'_\rho|\| \xi\|_{L^2(\pa \B_\rho)},
\end{aligned}
\]
where the constant $C>0$ depends on $n$ and $\rho$, but is uniformly bounded above for $|\pi/2-\rho|\leq \g.$
By standard elliptic estimates (see \cite[Theorem 9.13]{GT}), $\| \vphi\|_{H^2(\B_\rho)} \leq C\|w\|_{L^2(\B_\rho)}$, where again $C>0$ is uniformly bounded above depending on $n$ and $\g$. Since additionally $|u'_\rho|$ is bounded by a constant $C=C(n,\gamma)$,  we find that 
\[
\int_{B\rho} w^2 \leq C\|w\|_{L^2(\B_\rho)} \| \xi\|_{L^2(\pa \B_\rho)}.
\]
Dividing through by $\|w\|_{L^2(\B_\rho)}$ establishes the claim.
\end{proof}

From Theorem~\ref{t:gap} and Lemma~\ref{lem: l2 bound}, we can conclude that the second variation controls $\|\xi\|_{H^{1/2}(\pa B)}^2$ for any function $\xi \in H^{1/2}(\pa \B_\rho)$ that is orthogonal to $Y_0, \dots , Y_{n-1}$:
\begin{corollary}\label{cor: second var controls h1/2} 
Fix $n\geq 2$ and $\g \in (0,\pi/2)$. There is a positive constant $\hat{\eta}$ depending only on $n$ and $\g$ such that the following holds. Let $\rho \in (0,\pi)$ satisfy $|\pi/2-\rho| \leq\g.$ For any $\xi \in H^{1/2}(\pa \B_\rho)$ with $\int_{\pa \B_\rho} \xi Y_i = 0$ for $i=0,\dots, n,$ we have 
 \begin{equation}\label{eqn: second var and h1/2}
 \delta^2\lambda_1(\B_\rho)[\xi,\xi]\geq \hat{\eta} \|\xi\|_{H^{1/2}(\pa \B_\rho)}^2.
 \end{equation}
\end{corollary}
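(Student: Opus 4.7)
The plan is to combine the spectral gap from Theorem~\ref{t:gap} (which gives an $L^2$ lower bound) with the $L^2$-to-Dirichlet comparison of Lemma~\ref{lem: l2 bound} (which upgrades the control to the $\dot H^{1/2}$ seminorm).

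First I would diagonalize. Since $\{Y_i\}_{i=0}^\infty$ is an $L^2(\pa \B_\rho)$-orthonormal basis and each $Y_i$ is an eigenfunction of $L_\rho$ with eigenvalue $\eta_i$ by Corollary~\ref{cor: eigenfunctions}, writing $\xi = \sum_{i\geq 0} a_i Y_i$ and using the orthogonality hypothesis $a_0 = a_1 = \cdots = a_n = 0$ yields
\[
\tfrac{1}{2}\delta^2 \lambda_1(\B_\rho)[\xi,\xi] = \int_{\pa \B_\rho} \xi\, L_\rho \xi = \sum_{i \geq n+1} \eta_i a_i^2 \geq \eta_{n+1} \|\xi\|_{L^2(\pa \B_\rho)}^2,
\]
and by Theorem~\ref{t:gap}, $\eta_{n+1} \geq \bar\eta > 0$ uniformly for $|\rho - \pi/2| \leq \g$. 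This controls the $L^2$ portion of the $H^{1/2}$ norm by the second variation.

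Next I would control the seminorm. Rearranging \eqref{eqn: second variation} and invoking Lemma~\ref{lem: l2 bound} together with uniform bounds on $\lambda_\rho$, $|\mathcal H_{\B_\rho}|$, and $|u_\rho'|$ (which all depend continuously on $\rho$, hence are bounded for $|\rho-\pi/2|\leq \g$), I get
\[
2\int_{\B_\rho} |\na w_\xi|^2 = \delta^2\lambda_1(\B_\rho)[\xi,\xi] + 2\lambda_\rho \int_{\B_\rho} w_\xi^2 - \mathcal H_{\B_\rho}|u_\rho'|^2 \|\xi\|_{L^2(\pa \B_\rho)}^2 \leq \delta^2\lambda_1(\B_\rho)[\xi,\xi] + C_1 \|\xi\|_{L^2(\pa \B_\rho)}^2,
\]
with $C_1 = C_1(n,\g)$. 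Feeding in the $L^2$ bound from the previous step produces $\int_{\B_\rho} |\na w_\xi|^2 \leq C_2 \delta^2 \lambda_1(\B_\rho)[\xi,\xi]$ for a constant $C_2(n,\g)$.

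Finally, since the harmonic extension $h_\xi$ defined by \eqref{eqn: harmonic extension} minimizes the Dirichlet energy among all $H^1(\B_\rho)$ functions with trace $\xi$, and by linearity $h_{|u_\rho'|\xi} = |u_\rho'| h_\xi$, I compare with $w_\xi$, which has boundary trace $|u_\rho'|\xi$:
\[
|u_\rho'|^2 \|\xi\|_{\dot H^{1/2}(\pa \B_\rho)}^2 = \int_{\B_\rho} |\na h_{|u_\rho'|\xi}|^2 \leq \int_{\B_\rho} |\na w_\xi|^2 \leq C_2\, \delta^2 \lambda_1(\B_\rho)[\xi,\xi].
\]
Using again that $|u_\rho'|$ is bounded below by a positive constant depending only on $n$ and $\g$, and combining with the $L^2$ estimate, I obtain $\|\xi\|_{H^{1/2}(\pa \B_\rho)}^2 \leq \hat\eta^{-1} \delta^2 \lambda_1(\B_\rho)[\xi,\xi]$ for some $\hat\eta = \hat\eta(n,\g) > 0$, which is \eqref{eqn: second var and h1/2}. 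No step presents a real obstacle here; the only subtlety is recognizing that the Dirichlet energy of $w_\xi$ (which naturally appears in the second variation) dominates the harmonic extension energy by the minimizing property, converting $L^2$ coercivity into full $H^{1/2}$ coercivity.
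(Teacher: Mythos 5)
Your proposal is correct and follows essentially the same route as the paper: diagonalize in the spherical harmonic basis and apply the spectral gap of Theorem~\ref{t:gap} to get $L^2$ coercivity, then upgrade to the $\dot H^{1/2}$ seminorm via the expression \eqref{eqn: second variation}, Lemma~\ref{lem: l2 bound}, and the Dirichlet-minimality of the harmonic extension compared with $w_\xi$. The only difference is the (immaterial) order in which the energy comparison and the $L^2$ absorption are performed.
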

\begin{proof}
We express $\xi = \sum_{i={n+1}}^\infty a_i Y_i$ in the basis of spherical harmonics.
 By the linearity of $L_\rho$, we see that
$
 L_{\rho} \xi  = \sum_{i={n+1}}^\infty \eta_i a_iY_i. 
$
Thus, Theorem~\ref{t:gap} shows that 
\begin{equation}\label{eqn: 2nd var in basis}
 \frac{1}{2}\delta^2 \lambda_1(\B_\rho)[\xi, \xi ] =  \sum_{i={n+1}}^\infty \eta_i  a_i^2 \geq \eta_n \sum_{i={n+1}}^\infty  a_i^2 = \bar{\eta} \|\xi\|_{L^2(\pa \B_{\rho})}^2,
 \end{equation}
where $\bar \eta$ depends only on $n$ and $\gamma$.
In order to improve this estimate to replace $\|\xi\|_{L^2(\pa \B_\rho)}$ with  $\| \xi\|_{H^{1/2}(\pa \B_\rho)}$, let $h_\xi$ denote the harmonic extension of $|u_\rho'| \xi$ as defined in \eqref{eqn: harmonic extension} 
 and note that 
\[
 |u_\rho'|^2 \| \xi \|_{\dot{H}^{1/2}(\B_\rho)}^2 = \int_{\B_\rho} | \na h_\xi|^2  \leq \int_{\B_\rho} |\na w_\xi|^2 .
\]
Therefore, by first using the expression  \eqref{eqn: second variation} for the second variation, and then applying Lemma~\ref{lem: l2 bound} to $w_\xi$, we have 
\begin{align*}
2|u_\rho'|^2 \| \xi \|_{\dot{H}^{1/2}(\B_\rho)}^2 &  \leq 2\int_{\B_\rho} |\na w_\xi|^2   \\
&\leq  \delta^2\lambda_1(\B_\rho)[\xi,\xi]   + |\mathcal{H}_{\B_\rho}||u_\rho'|^2 \int_{\pa \B_\rho} \xi^2+2\lambda_\rho\int_{\B_\rho} w_\xi^2\\
& \leq  \delta^2\lambda_1(\B_\rho)[\xi,\xi] + C\int_{\pa \B_\rho} \xi^2,
\end{align*}
where $C= C(n,\gamma)$. Noting that $|u_\rho'|>0$ depends continuously on $\rho$ and thus $| u_\rho'| \geq c({n,\gamma})>0$,  we apply the estimate \eqref{eqn: 2nd var in basis} to the second term on the right-hand side to conclude the proof. 
\end{proof}

Finally, we can combine Corollary~\ref{cor: second var controls h1/2} with Theorem~\ref{lem: second var and deficit} to obtain the following theorem.

\begin{theorem}\label{th: deficit controls H1/2}
	Fix $n\geq 2$ and $\g \in (0,\pi/2)$. There are positive constants $\e$ and $\hat{\eta}$ depending only on $n$ and $\g$ such that the following holds. Let $|\pi/2-\rho| \leq \g$ and let $\Omega$ be a nearly spherical set parametrized by $\xi$ over $\B_\rho$ with $\|\xi\|_{C^{2,\alpha}(\pa \B_\rho)}\leq \e.$ Then 
		\[
	\lambda_1(\Omega)-\lambda_1(\B_\rho) \geq \hat{\eta} \| \xi\|_{H^{1/2}(\pa \B_{\rho})}^2.
	\]
\end{theorem}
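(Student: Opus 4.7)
The plan is to reduce the deficit to the second variation via Theorem~\ref{lem: second var and deficit}, then apply the spectral gap of Corollary~\ref{cor: second var controls h1/2} after stripping off the kernel of $L_\rho$. Setting $\hat{\xi} = \xi - \fint_{\pa\B_\rho}\xi$ and invoking Theorem~\ref{lem: second var and deficit} together with Remark~\ref{rmk: volume constraint}, we get
\begin{equation*}
\lambda_1(\Omega) - \lambda_1(\B_\rho) = \tfrac{1}{2}\delta^2 \lambda_1(\B_\rho)[\hat{\xi},\hat{\xi}] + \omega(\|\xi\|_{C^{2,\alpha}(\pa\B_\rho)})\|\hat{\xi}\|_{H^{1/2}(\pa\B_\rho)}^2
\end{equation*}
for a modulus $\omega$ depending only on $n$ and $\g$. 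Decompose $\hat{\xi} = \hat{\xi}^\flat + \hat{\xi}^\sharp$, with $\hat{\xi}^\flat$ the $L^2(\pa\B_\rho)$-projection of $\hat{\xi}$ onto $\mathrm{span}\{Y_1,\ldots,Y_n\}$. Since $\eta_1 = \cdots = \eta_n = 0$ by Corollary~\ref{cor: eigenfunctions}, the kernel part contributes nothing to the quadratic form, so Corollary~\ref{cor: second var controls h1/2} applies to $\hat{\xi}^\sharp$ (which is orthogonal to $Y_0,\ldots,Y_n$) to give $\delta^2\lambda_1(\B_\rho)[\hat\xi,\hat\xi] \geq \hat\eta\|\hat{\xi}^\sharp\|_{H^{1/2}}^2$. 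The remaining task, and the main obstacle, is to show that the kernel component $\hat{\xi}^\flat$ is quadratically small compared to $\xi$.

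This is where the barycenter hypothesis enters. Under the standard embedding $S^n \subset \R^{n+1}$, pick $e_{n+1}$ to be the north pole $o$ and $\{e_1,\ldots,e_n\}$ an orthonormal basis of the tangent space at $o$; then $Y_1,\ldots,Y_n$ are nonzero scalar multiples of the restrictions of the coordinate functions $y\mapsto y\cdot e_k$ to $\pa\B_\rho$. The assumption $x_\Omega = o$ forces
\begin{equation*}
0 = y_\Omega \cdot e_k = \int_\Omega y\cdot e_k \, d\vol(y) \qquad (k = 1, \ldots, n).
\end{equation*}
Expanding each integral in the parametrization $\partial\Omega = \{(\theta, (1+\xi(\theta))\rho)\}$, the zeroth-order term vanishes by symmetry of the cap, the first-order term is a nonzero $\rho$-dependent multiple of $a_k := \int_{\pa\B_\rho}\xi Y_k$, and the remainder is bounded by $C(n,\rho)\|\xi\|_{L^2(\pa\B_\rho)}^2$ (the integrand and the Jacobian in $\xi$ are smooth and bounded). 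Hence $|a_k| \leq C\|\xi\|_{L^2}^2$ for each $k=1,\ldots,n$; since $\hat{\xi}^\flat$ lives in a fixed finite-dimensional space of smooth functions on $\pa\B_\rho$, we conclude $\|\hat{\xi}^\flat\|_{H^{1/2}(\pa\B_\rho)}^2 \leq C\|\xi\|_{L^2(\pa\B_\rho)}^4$.

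To conclude, combine the estimates: $\|\hat{\xi}^\sharp\|_{H^{1/2}}^2 \geq \|\hat{\xi}\|_{H^{1/2}}^2 - C\|\xi\|_{L^2}^4$. Using $\|\xi\|_{L^\infty} \leq \e$ gives $\|\xi\|_{L^2}^4 \leq C\e^2 \|\xi\|_{L^2}^2$, and Remark~\ref{rmk: volume constraint} provides $\|\xi\|_{L^2}^2 \leq 2\|\hat{\xi}\|_{L^2}^2 \leq C\|\hat{\xi}\|_{H^{1/2}}^2$ for $\e$ small. Putting the pieces together,
\begin{equation*}
\lambda_1(\Omega) - \lambda_1(\B_\rho) \geq \bigl(\tfrac{\hat\eta}{2} - C\e^2 - \omega(\e)\bigr)\|\hat{\xi}\|_{H^{1/2}}^2 \geq \tfrac{\hat\eta}{4}\|\xi\|_{H^{1/2}(\pa\B_\rho)}^2
\end{equation*}
for $\e$ sufficiently small in terms of $n$ and $\g$, where in the last step I invoke Remark~\ref{rmk: volume constraint} once more to replace $\hat\xi$ by $\xi$ in the $H^{1/2}$ norm. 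Relabeling $\hat\eta/4$ as $\hat\eta$ gives the claim.
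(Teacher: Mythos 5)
Your proposal is correct and follows essentially the same route as the paper: reduce the deficit to the second variation via Theorem~\ref{lem: second var and deficit}, use the volume and barycenter constraints to show the coefficients along $Y_0,\dots,Y_n$ are negligible, apply the spectral gap of Corollary~\ref{cor: second var controls h1/2} to the remaining part, and absorb the errors for $\e$ small. Your quantitative bound $|a_k|\leq C\|\xi\|_{L^2}^2$ and the use of $\eta_1=\dots=\eta_n=0$ to drop the kernel modes from the quadratic form are only cosmetic refinements of the paper's argument.
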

\begin{proof}
We express $\xi$ in the basis of spherical harmonics as $\xi = \sum_{i=0}^\infty a_i Y_i$ where $a_i = \int_{\pa \B_\rho} Y_i \xi.$   The volume constraint and the set center constraint respectively imply that  
	\begin{align}\label{e: constraints linearized}
		a_0^2 = o\left( \|\xi\|_{L^2(\pa \B_\rho)}^2\right), \qquad a_1^2, \dots a_{n}^2	= o\left( \|\xi\|_{L^2(\pa \B_\rho)}^2\right)
	\end{align} 
	with the $o(\cdot)$ term depending on $n$ and $\gamma$.
Indeed, as we noted in Remark~\ref{rmk: volume constraint}, the first fact is a standard computation that can be found in \cite[Theorem 3.1]{BDF17}. To see the second, consider the standard embedding of $S^{n} \subset \R^{n+1}$ with the north pole $o =e_{n+1}$, and fix a basis vector $e_i$ for $i=1,\dots, n$. In spherical coordinates we may write $e_i = (1,\bar \theta, \pi /2) $ for some $\bar \theta \in S^{n-1}$. So, from the barycenter constraint $x_\Omega = e_{n+1}$, we have  	
\begin{align*}
 0=e_i  \cdot \int_\Omega x \, d\vol& = \int_{S^{n-1}} \int_0^{(1 + \xi(\theta)\rho)} \left( (\theta \cdot \bar \theta) \sin \phi\right) \sin^{n-1}\phi  \,d\phi \, d\theta\\
 & = \int_{S^{n-1}} (\theta \cdot \bar \theta) \int_0^{(1 + \xi(\theta)\rho)}\sin^{n}\phi  \,d\phi \, d\theta\\
 & = \int_{S^{n-1}} (\theta \cdot \bar \theta)\, F\left((1+\xi(\theta))\rho\right)   \, d\theta,
\end{align*}
where the first inner product is the standard Euclidean inner product, $F(t) = \int_0^t \sin(s)^{n}\,ds$ is  a  smooth hypergeometric function, and $\theta \cdot \bar \theta $ indicates the inner product with respect to the standard metric on $S^{n-1}$. A Taylor  expansion of $F$  and H\"{o}lder's inequality thus show that
\begin{align*}
  0 &=\int_{S^{n-1}} (\theta \cdot \bar \theta)\Big( F(\rho)  +  C_\rho \xi(\theta ) + o\left(\| \xi\|_{L^2(\pa \B_\rho)}\right)\Big) \, d\theta \\
& =  C_\rho  \int_{S^{n-1}} (\theta \cdot \bar \theta)\xi(\theta ) \, d\theta + o\left(\| \xi\|_{L^2(\pa \B_\rho)}\right).
  \end{align*}
So, we see that for $i-1,\dots, n$, we have $a_i = \int_{S^{n-1}}(\theta \cdot \bar \theta)\xi(\theta )) = o(\|\xi \|_{L^2(S^{n-1})})$ and conclude \eqref{e: constraints linearized}.
Define the functions 
\[
\hat{\xi} = \xi -a_0Y_0, \qquad 
\bar \xi = \hat{\xi} -\sum_{i=1}^{n}a_iY_i\,.
\]
 As a consequence of \eqref{e: constraints linearized}, we see that 
\begin{align*}
\| \xi \|_{H^{1/2}(\pa (\B_\rho))} &= \| \hat{\xi} \|_{H^{1/2}(\pa \B_\rho)} + o\left(\| \xi \|_{L^2(\pa \B_\rho)}^2\right) \\
&= |\bar{ \xi} \|_{H^{1/2}(\pa \B_\rho)} +o\left(\| \xi \|_{L^2(\pa \B_\rho)}^2\right)\,.
\end{align*}
	By Theorem~\ref{lem: second var and deficit} and applying Corollary~\ref{cor: second var controls h1/2} to $\bar{\xi}$, we have 
	\begin{align*}
		\lambda_1(\Omega) - \lambda_1(\B_\rho) & \geq \frac{1}{2}\delta^2\lambda(\B_\rho)[\hat{\xi},\hat{\xi}] +o\left( \| \xi \|_{H^{1/2}(\pa \B_\rho)}^2\right)\\
		& = \frac{1}{2}\delta^2\lambda(\B_\rho)[\bar{\xi},\bar{\xi}] + o\left( \| \xi \|_{H^{1/2}(\pa \B_\rho)}^2\right)\\
		& = \frac{\hat \eta}{2} \| \bar \xi\|_{H^{1/2}(\pa \B_\rho)}^2 +o\left( \| \xi \|_{H^{1/2}(\pa \B_\rho)}\right)\\
		&= \frac{\hat \eta}{2} \| \xi\|_{H^{1/2}(\pa \B_\rho)}^2 +o\left( \| \xi \|_{H^{1/2}(\pa \B_\rho)}\right).
	\end{align*}
Choosing $\e$ sufficiently small depending on $\bar \eta$ (and thus on $\gamma$ and $n$), we may absorb the second term on the right-hand side to conclude.
	\end{proof}

\bigskip
 
\subsubsection{The $H^{1/2}$ norm controls the distance }\label{ssec: h12 and distance}
Thanks to Theorem~\ref{th: deficit controls H1/2}, we will conclude the proof of Proposition~\ref{prop: quantitative for nearly spherical sets} as soon as we can show that the terms $|\Om \Delta \B_\rho|$ and $\int_{S^{n}}| u_\Om - u_{\B_\rho}|$ are controlled linearly by $\|\xi\|_{H^{1/2}(B)}.$ 
This is the content of the following proposition.

In the case of Euclidean space and hyperbolic space, the proof of this proposition will be identical up to replacing the coarea factor $\sin^{n-1} \phi$ with $\phi^{n-1}$ and $\sinh^{n-1}\phi$ respectively in various integrals throughout the proof.

\begin{proposition}\label{prop: control efunc} 
Fix $n\geq 2$ and $\g \in (0,\pi/2)$. There are positive constants $\e$ and $C$ depending on $n$ and $\g$ such that the following holds.  Let $|\pi/2-\rho| <\g.$ Let $\Omega \subset S^{n}$ be a nearly spherical set parameterized by $\xi$ 
	with $\|\xi\|_{C^{2,\alpha}(\pa \B_\rho)} \leq \e$  and $\lambda_1(\Omega)-\lambda_1(\B_\rho) \leq \e\lambda_1(\B_\rho).$ Then 
	\begin{align}\label{eqn: h1/2 controls asymmetry}
	|\Om \Delta \B_\rho|^2  +\int_{S^{n}}| u_\Om - u_{\B_\rho}|^2 & \leq  C\|\xi\|^2_{H^{1/2}(\pa \B_\rho)}.
	\end{align}
	\end{proposition}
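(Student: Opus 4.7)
I would split the proof of \eqref{eqn: h1/2 controls asymmetry} into controlling the asymmetry and the eigenfunction $L^2$ distance separately. The asymmetry bound is the easier of the two: an application of the coarea formula in spherical coordinates gives $|\Omega \Delta \B_\rho| \leq C \|\xi\|_{L^1(\pa \B_\rho)}$, and Cauchy--Schwarz together with the continuous embedding $H^{1/2}(\pa \B_\rho) \hookrightarrow L^2(\pa \B_\rho)$ immediately yields $|\Omega \Delta \B_\rho|^2 \leq C\|\xi\|^2_{H^{1/2}}$.

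For the eigenfunction bound, decompose
\[
\int_{S^n}|u_\Omega - u_{\B_\rho}|^2 = \int_{\Omega \cap \B_\rho}|u_\Omega - u_{\B_\rho}|^2 + \int_{\Omega \Delta \B_\rho}\bigl(u_\Omega^2 + u_{\B_\rho}^2\bigr).
\]
The contribution from the thin layer $\Omega \Delta \B_\rho$ is routine: the $C^{2,\alpha}$-smallness of $\xi$ yields uniform Lipschitz bounds on both eigenfunctions (via elliptic regularity for the eigenvalue equation), and since each vanishes on its own boundary, both are pointwise bounded by $C|\xi|$ in the layer. Since the layer has volume of order $\int |\xi|$, the thin-layer integrals satisfy the bound $C\int|\xi|^3 \leq C\|\xi\|_{L^\infty}\|\xi\|^2_{L^2} \leq C\e\|\xi\|^2_{H^{1/2}}$.

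For the shared region, I would use a test-function plus spectral-gap argument. Build a smooth diffeomorphism $\Phi: S^n \to S^n$ equal to the identity outside a fixed annular neighborhood of $\pa \B_\rho$, mapping $\pa \B_\rho$ onto $\pa \Omega$ according to $\xi$, with $\|\Phi - \mathrm{id}\|_{C^{2,\alpha}} \lesssim \|\xi\|_{C^{2,\alpha}}$. Set $\phi := u_{\B_\rho} \circ \Phi^{-1} \in H^1_0(\Omega)$. A change of variables in the Rayleigh quotient yields an expansion of the form $R_\Omega(\phi) = \lambda_\rho + \alpha(\xi) + \beta(\xi,\xi) + O(\|\xi\|_{C^{2,\alpha}}\|\xi\|^2_{H^{1/2}})$: the linear term $\alpha(\xi)$ is proportional to $\int_{\pa \B_\rho} \xi$, which is $O(\|\xi\|^2_{L^2})$ by the volume constraint (Remark~\ref{rmk: volume constraint}), and the quadratic term matches $\tfrac{1}{2}\delta^2\lambda_1(\B_\rho)[\xi, \xi]$, bounded by $C\|\xi\|^2_{H^{1/2}}$ via \eqref{eqn: second variation}. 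Thus $R_\Omega(\phi) \leq \lambda_\rho + C\|\xi\|^2_{H^{1/2}}$. Expanding $\phi$ in the eigenbasis of $\Omega$ and using the uniform spectral gap $\lambda_2(\Omega) - \lambda_1(\Omega) \geq c > 0$ (inherited by continuity from the spectral gap on $\B_\rho$), the Rayleigh bound forces $\|\phi - a u_\Omega\|_{L^2(\Omega)} \leq C\|\xi\|_{H^{1/2}}$ for some $a$ close to $1$ (with the correct sign, since $\phi, u_\Omega \geq 0$). Meanwhile $\phi - u_{\B_\rho}$ is supported in the fixed annulus around $\pa \B_\rho$ with $|\phi(x) - u_{\B_\rho}(x)| \leq C|\Phi^{-1}(x) - x|$, and integration in spherical coordinates bounds this by $C\|\xi\|_{L^2(\pa \B_\rho)}$. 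The triangle inequality then yields $\|u_\Omega - u_{\B_\rho}\|_{L^2(\Omega \cap \B_\rho)} \leq C\|\xi\|_{H^{1/2}}$.

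The main technical obstacle is the Rayleigh quotient expansion: one must verify that the linear-in-$\xi$ term vanishes to leading order thanks to the volume constraint, and that the quadratic contribution reduces precisely to the second variation formula \eqref{eqn: second variation}. This requires a careful Taylor expansion of the quantities $\int_{\B_\rho}|\nabla u_{\B_\rho}|^2 A(\xi)$ and $\int_{\B_\rho} u_{\B_\rho}^2 B(\xi)$, where $A$ and $B$ are matrix- and scalar-valued functions of $D\Phi$ and $\det D\Phi$; the cancellations producing the second-variation formula come from $u_{\B_\rho}$ solving the eigenvalue PDE on $\B_\rho$ together with the explicit structure of $\Phi$ near $\pa \B_\rho$. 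The continuity of the spectral gap $\lambda_2(\Omega) - \lambda_1(\Omega)$ and the sign ambiguity for $a$ are easily resolved by the $C^{2,\alpha}$-smallness of the perturbation and the positivity of the functions involved.
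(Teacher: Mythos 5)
Your treatment of the asymmetry and of the thin layer $\Omega\Delta\B_\rho$ is fine and matches the paper's Step 2. For the common region you take a genuinely different route: you push $u_{\B_\rho}$ forward as a test function on $\Omega$, bound its Rayleigh quotient, and invoke the spectral gap of $\Omega$, whereas the paper pulls $u_\Omega$ back to $\B_\rho$ by a diffeomorphism, subtracts its projection onto $u_{\B_\rho}$, and runs an energy estimate on the equation solved by the difference, using only the gap of $\B_\rho$. Your scheme is viable in principle, but as written it has a genuine gap at its central step, the expansion $R_\Omega(\phi)\le \lambda_\rho + C\|\xi\|^2_{H^{1/2}(\pa\B_\rho)}$.

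The problem is that you only require $\|\Phi-\mathrm{id}\|_{C^{2,\alpha}}\lesssim\|\xi\|_{C^{2,\alpha}}$. Writing $V=\Phi-\mathrm{id}$, the change of variables expresses $R_\Omega(\phi)$ through bulk integrals whose quadratic-in-$V$ parts have the form $\int_{\B_\rho}|\na u_{\B_\rho}|^2\,Q(DV)$ and $\int_{\B_\rho}u_{\B_\rho}^2\,\tilde Q(DV)$ with $Q,\tilde Q$ quadratic in the matrix $DV$; they are controlled by $\|DV\|_{L^2(\B_\rho)}^2$ and by nothing weaker. For a generic extension of the boundary datum $\xi$ (e.g.\ a cutoff times a fiberwise-constant extension, which is what ``identity outside a fixed annulus'' suggests), $\|DV\|^2_{L^2(\B_\rho)}$ is of size $\|\xi\|^2_{H^1(\pa\B_\rho)}$, which is not controlled by $\|\xi\|^2_{H^{1/2}(\pa\B_\rho)}$; interpolating with the $C^{2,\alpha}$ bound only yields $\e\|\xi\|_{H^{1/2}}$, a bound linear rather than quadratic in $\|\xi\|_{H^{1/2}}$, hence useless for the sharp statement. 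Relatedly, the quadratic term of your expansion is \emph{not} $\tfrac12\delta^2\lambda_1(\B_\rho)[\xi,\xi]$: the second variation \eqref{eqn: second variation} is built from the optimal corrector $w_\xi$ of \eqref{eqn: eigenvalue pb extension}, while the transported test function carries the non-optimal corrector $-\na u_{\B_\rho}\cdot V$, so your quadratic form dominates the second variation and genuinely depends on the bulk extension (only the upper bound is needed, so this alone is harmless, but it means the estimate cannot be delegated to \eqref{eqn: second variation}). The fix is exactly the device the paper uses: build the map from the harmonic extension $h_\xi$ of $\xi$ in \eqref{eqn: harmonic extension}, as in $\Psi(x)=(1+h_\xi(x))\exp_o(x)$, so that $\|D\Phi-\Id\|_{L^2(\B_\rho)}\le C\|h_\xi\|_{W^{1,2}(\B_\rho)}\le C\|\xi\|_{H^{1/2}(\pa\B_\rho)}$, which is precisely the content of \eqref{eqn: L2 bound for coefficients}. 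With that choice (and the routine points you defer: the linear term being $O(\|\xi\|_{L^2}^2)$ by the volume constraint, the uniform gap $\lambda_2(\Omega)-\lambda_1(\Omega)\ge c$ for nearly spherical sets, and $\|\phi-u_{\B_\rho}\|_{L^2}\le C\|\xi\|_{L^2}$ on the annulus) the rest of your argument goes through.
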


\begin{proof}
The fact that $|\Omega\Delta \B_\rho| \leq C \|\xi\|_{L^2(\pa B)} \leq C\|\xi\|_{H^{1/2}(\pa \B_\rho)}$ is a standard computation; see \cite[Proof of Theorem 4.3]{CiLe12} or \cite[Lemma 4.2]{BDV15}.  Establishing $\int_{S^{n}}| u_\Om - u_{\B_\rho}|^2 \leq C\| \xi\|^2_{H^{1/2}(\pa \B_\rho)}$ is more involved and will be carried out in several steps.

{\it Step 1.}  First, we define a diffeomorphism between $\B_\rho$ and $\Om$ and express the equation solved by $u_\Om$ pulled back by this diffeomorphism. To this end, 
let $h : \B_\rho \to \R$ be the harmonic extension of $\xi$ in $\B_\rho$ defined in \eqref{eqn: harmonic extension}. Let  $\Psi: \B_\rho\to S^{n}$ be the smooth map defined by
\begin{equation}
	\Psi(x) = (1 + h(x))\exp_o(x),
\end{equation} 
where $\exp_o(x)$ is the exponential map at $o$.
In normal coordinates $\{x^\alpha\}$, the differential $d\Psi$ has coefficients 
\begin{equation}\label{eqn: coord expression}
\frac{\pa \Psi^\alpha}{\pa x^i} = (1+h(x)) \delta_i^\alpha +\frac{\pa}{\pa x^i} h(x) x^\alpha.	
\end{equation}
Since $h$ is harmonic and $|\na h|^2$ is subharmonic, the maximum principle ensures that 
$$
|h(x)|, |\na h(x)| \leq \| \xi\|_{C^{1}(\pa \B_\rho)} \leq C\| \xi \|_{C^{2,\alpha}(\pa \B_\rho)} \leq C\e	
$$
for all $x \in \B_\rho$, where $C=C(n,p)$. The metric coefficients $g_{ij}$ are uniformly bounded uniformly bounded above, so $|\frac{\pa }{\pa x^i} h(x) |\le C\e$ as well. 
We thus see that 
\begin{equation}\label{eqn: estimates on dPsi}
	\left|\frac{\pa \Psi^\alpha}{\pa x^i} - \delta^\alpha_i \right| \leq C\e,
\end{equation}
where $C$ is a constant depending only on $n$ and $\alpha$.
In particular, the differential $d\Psi$ is non-vanishing on $\B_\rho$, so  $\Psi$ is injective and hence a diffeomorphism onto its image. Since $\Psi$ maps $\pa \B_\rho$ to $\pa \Om,$ it follows that $\Om$ is the image of $\B_\rho$ under $\Psi$  and so $\Psi$ defines a smooth diffeomorphism from $\B_\rho$ to $\Omega.$ 
We let $\Phi: \Omega \to \B_\rho$ denote the inverse of $\Psi.$ 
\\

Recall that $u_\Omega$ denotes the first Dirichlet eigenfunction of $\Omega$,  normalized so that $\int u_\Omega^2 \,dx =1$ and extended by $0$ to be defined on all of $S^{n}$. We define $\hat u_\Om:\B_\rho\to \R$ to be the pullback $\hat u_\Omega(x) = u_\Omega(\Psi(x))$ of $u_\Omega$ by $\Psi.$  Then $\hat u_\Omega$ satisfies the equation 
\begin{equation}
	\begin{cases}
	-\mathcal{L}\hat u_\Omega = m\lambda_\Omega \hat u_\Omega & \text{ in } \B_\rho\\
	\hat u_\Om = 0& \text{ on }\pa \B_\rho.	
	\end{cases}
\end{equation}
Here and in the remainder of the proof we use the short-hand $\lambda_\Omega = \lambda_1(\Omega)$, and 
 $\mathcal{L}$ is the a linear divergence form operator given by 
\begin{equation}\label{eqn: pullback operator}
\mathcal{L} f = \,\divv (A\na f),  	
\end{equation}
where $A = m\mathcal{A}$ for $\mathcal{A}(x) 
 = d\Phi(\Psi(x)) (d\Phi(\Psi(x))^*$
and  $m = 1/\sqrt{\det \mathcal{A}}$.
From the definition of $\mathcal{L}$ and the coordinate expression \eqref{eqn: coord expression}, we see that the coefficients satisfy the pointwise estimates
\begin{equation}\label{eqn: L2 bound for coefficients euc}
	|A-\Id| \leq C(h + |\na h|), \qquad |m-1| \leq C(h + |\na h|)
\end{equation}
for all $x$ in $\B_\rho$, where $C$ is a dimensional constant. Furthermore, we have  
\begin{equation}\label{eqn:  holder coefficients a}
	\|A-\Id\|_{C^{0,\alpha}(\B_\rho)} \leq C\|h\|_{C^{1,\alpha}(\B_\rho)}, \qquad \|m-1\|_{C^{0,\alpha}(\B_\rho)} \leq C\|h\|_{C^{1,\alpha}(\B_\rho)}.
\end{equation}
In particular, by \eqref{eqn: estimates on dPsi} and by \cite[Corollary 8.35]{GT}, we have
\begin{equation}\label{eqn: C1alpha u}
	\| u_\Omega\|_{C^{1,\alpha}(\bar\Omega)}\leq C, \qquad 	\|\hat u_\Om\|_{C^{1,\alpha}(\bar \B_\rho)}\leq C.
\end{equation}
Additionally, we have 
\begin{equation}\label{eqn: L2 bound for coefficients}
	\| A-\Id\|_{L^2(\B_\rho)}\leq C\|h\|_{W^{1,2}(\B_\rho)}\leq C\| \xi\|_{H^{1/2}(\B_\rho)} .
\end{equation}
\\
\medskip


{\it Step 2:} We now prove the following integral estimate for $u_\Om - \hat u_\Om$:
\begin{align}\label{eqn: u vs pullback}
	\int_{S^{n}} |u_\Omega -\hat u_\Omega |^2  \leq C \ \| \xi\|_{H^{1/2}(\pa B)}^2.
\end{align}
 We argue in three separate regions: $\B_\rho\cap \Omega$, $\Omega\setminus \B_\rho$, and $\B_\rho\setminus \Omega$.  First, given any $x \in \B_\rho\cap \Omega$,  we have
\begin{align*}
	|\hat u_{\Omega}(x) -u_\Omega(x)|& = |u_\Om(\Psi(x)) - u_{\Omega}(x)| \leq \| u_\Omega\|_{C^1(\Omega)}d(x, \Psi (x)) \leq  Ch(x). 
\end{align*}
The final two inequalities follow from \eqref{eqn: C1alpha u} and the definition of $\Psi$ respectively. So, squaring and integrating this pointwise estimate and applying Lemma~\ref{lem: l2 bound} (or more specifically, the analogous statement with $\lambda_\rho$ replaced by $0$), we have 
\begin{align}\label{eqn: u vs pullback on intersection}
	\int_{\B_\rho\cap \Omega} |u_\Omega -\hat u_\Omega |^2   \leq C \| \xi\|_{L^2(\pa \B_\rho)}^2.
\end{align}

Next, for any $x \in \Omega \setminus \B_\rho$, note that $\hat{u}_\Om= 0$ and so $|u_{\Omega}(x) - \hat{u}_\Om(x)| = u_\Omega(x)$. Thanks to the estimate \eqref{eqn: C1alpha u} and the fact that $u_\Omega$ vanishes on $\pa \Omega,$ we see that $u_\Om(x) \leq \frac{C}{\rho} d(x,\pa \Om)$. Thus, in spherical coordinates $(\theta,\phi)$, we have $u_\Om(\theta,\phi)\leq C(1+\xi(\theta)-\phi/\rho).$ Hence, using the coarea formula, we have
\begin{align*}
	\int_{\Omega\setminus \B_\rho} u_\Omega^2 
	& = \int_{\{\xi >0\}} \int_\rho^{(1+\xi)\rho} u(\theta, \phi)^2  \sin(\phi)^{n-1} \,d\phi \,d\theta \\
	&  \leq C\sin(\rho)^{n-1} \int_{\{\xi >0\}} \int_\rho^{(1+\xi)\rho}(1+\xi(\theta)-\phi/\rho)^2  \,d\phi \,d\theta\\
	& = C\sin(\rho)^{n-1} \rho \int_{\{\xi >0\}} \xi^2  d\theta \leq C\rho \| \xi\|_{L^2(\pa \B_\rho)}^2. 
\end{align*}
where the penultimate equality comes from the change of variable $1+\xi(\theta)-\phi/\rho=s$.
The analogous argument on $\B_\rho\setminus \Omega$ using $\hat{u}_{\Omega}$ in place of $u_\Omega$. Together, these estimates along with \eqref{eqn: u vs pullback on intersection} show \eqref{eqn: u vs pullback}.
\\


\medskip
{\it Step 3:} Finally, we show that 
\begin{equation}\label{eqn: hat u and uB}
	\int_{\B_\rho} |\hat u_\Omega - u_{\B_\rho}|^2 
	\leq C \| \xi\|_{H^{1/2}(\pa \B_\rho)}^2.
\end{equation}
Let us begin with the following simplification. Let $\alpha_\Omega = \int_{\B_\rho} \hat{u}_\Omega u_{\B_\rho}$. Then
\[
	\int_{\B_\rho} |\hat u_\Omega - u_{\B_\rho}|^2  \leq 4	\int_{\B_\rho} |\hat u_\Omega - \alpha_\Om u_{\B_\rho}|^2 ,
\]
since the left-hand side is equal to $2(1-\alpha_\Om^2)$ while the right-hand side is equal to $1-\alpha_\Omega^2.$ 
It therefore suffices to show that 
\begin{equation}\label{eqn: intermediate claim}
	\int_{\B_\rho} |\hat u_\Omega -\alpha_\Omega u_{\B_\rho}|^2 
	\leq C \| \xi\|_{H^{1/2}(\pa \B_\rho)}^2.
\end{equation}	
To this end, set $v = \hat u_\Omega - \alpha_\Omega u_B.$ The idea behind showing \eqref{eqn: intermediate claim} is that $v$ solves an equation with right-hand side controlled in terms of $\xi$. More specifically,
 we see that $v$ satisfies the equation
\begin{align*}
		-\mathcal{L} v &= -\mathcal{L} \hat u_\Omega + \alpha_\Omega \mathcal{L}u_{\B_\rho}\\
		&= m\lambda_\Omega\hat u_\Omega -\alpha_\Omega \lambda_\rho  u_{\B_\rho} +\alpha_\Omega (\mathcal{L}-\Delta) u_{\B_\rho}\\
		& = \lambda_\Omega v +\alpha_\Om(\lambda_\Om - \lambda_{\rho}) u_{\B_\rho} + \alpha_\Om(\mathcal{L}-\Delta) u_{\B_\rho} +(m-1)\lambda_\Omega\hat u_\Omega.
\end{align*}
We multiply this equation by $v$ and integrate over $\B_\rho$. On one hand, note that \eqref{eqn: L2 bound for coefficients euc} ensures that $A \geq (1-C\e) \Id$. So integrating by parts, we find that
\begin{equation}\label{eqn: lower bound}
	- \int_{\B_\rho} v\, \mathcal{L}v 
	\geq (1-C\e) \int_{\B_\rho}|\na v|^2.
\end{equation}
On the other hand, note that $\int_{\B_\rho} v u_{\B_\rho} = 0$ and $v$ vanishes on $\pa \B_\rho$, and thus $v$ satisfies the improved Poincar\'{e} inequality 
\begin{equation}\label{eqn: orthogonal}
\int_{\B_\rho} |\na v|^2  \geq \lambda_2 \int_{\B_\rho} v^2 
\end{equation}
where $\lambda_2 =\lambda_2(\B_\rho)$ is the second Dirichlet eigenvalue of $\B_\rho$. 
So, again using the orthogonality of $u_{\B_\rho}$ and $v$, we find that 
\begin{equation}\label{eqn: upper bound}
\begin{split}
	 - \int_B v\, \mathcal{L}v & = \int_{\B_\rho} v\{ \lambda_\Omega v +\alpha_\Om(\lambda_\Om - \lambda_{\rho}) u_{\B_\rho} + \alpha_\Om(\mathcal{L}-\Delta) u_{\B_\rho}+(m-1)\lambda_\Omega\hat u_\Omega\}\\
	 & =   \int_{\B_\rho} \{\lambda_\Omega v^2 + \alpha_\Omega (\mathcal{L}-\Delta)u_{\B_\rho} v+(m-1)\lambda_\Omega\hat u_\Omega v\}\, dx\\
	 &\leq \frac{\lambda_\Omega}{\lambda_2} \int_{\B_\rho} |\na v|^2 + \alpha_\Omega \int_{\B_\rho} (\mathcal{L}-\Delta)u_{\B_\rho} v + \lambda_\Omega\int (m-1)\hat u_\Omega v.
\end{split}	
\end{equation}
Provided $\e $ is sufficiently small with respect to the spectral gap $\lambda_1/\lambda_2$, we may combine \eqref{eqn: lower bound} and \eqref{eqn: upper bound} and absorb the first term on the right-hand side of \eqref{eqn: upper bound} to find 
\begin{equation}\label{eqn: gradient bounds}
	c\int_{\B_\rho}|\na v|^2 \leq  \alpha_\Omega \int_{\B_\rho} (\mathcal{L}-\Delta)u_{\B_\rho} v + \lambda_\Omega\int_{\B_\rho} (m-1)\hat u_\Omega v.
\end{equation}
 Now, we bound the two terms on the right-hand side of \eqref{eqn: gradient bounds} separately. For the first term, we integrate by parts and then apply the Cauchy-Schwarz inequality and the bound \eqref{eqn: L2 bound for coefficients} on the coefficients to find  
\begin{align*}
\alpha_\Omega \int_{\B_\rho} (\mathcal{L}-\Delta)u_{\B_\rho} v & 
=  \alpha_\Omega \int_{\B_\rho} \langle (A-\Id)\na u_{\B_\rho}, \na v\rangle \\
	& \leq  \frac{\eta\, \alpha_\Omega}{2} \int_{\B_\rho} |\na v|^2 + \frac{\alpha_\Omega}{2\eta}\|\na u_{\B_\rho}\|_{L^\infty(\B_\rho)} \int_{\B_\rho} |A-\Id|^2  \\
	& \leq \frac{\eta\, \alpha_\Om }{2} \int_{\B_\rho} |\na v|^2 + \frac{\alpha_\Om }{2\eta}\|\na u_{\B_\rho}\|_{L^\infty(\B_\rho)} \|\xi\|_{H^{1/2}(\pa \B_\rho)}^2.
\end{align*}
Here $\langle \cdot, \cdot \rangle $ denotes the inner product on $S^{n}$ with respect to the round metric.
Provided that $\eta $ is chosen to be sufficiently small, we can absorb the first term to the left-hand side of \eqref{eqn: gradient bounds}. In a similar way, for the second term on the right-hand side of \eqref{eqn: gradient bounds}, we have
\begin{align*}
	 \lambda_\Omega\int_{\B_\rho} (m-1)\hat u_\Omega v & \leq \frac{\lambda_\Omega}{\eta} \int_{\B_\rho} |m-1|^2 + \lambda_\Omega\eta \| \hat{u}_\Om\|_{L^\infty(\B_\rho)}^2 \int_{\B_\rho} v^2 \\
	 & \leq \frac{\lambda_\Omega}{\eta}\|\xi\|_{H^{1/2}(\pa \B_\rho)}^2 + \lambda_\Omega\eta C \int_{\B_\rho} v^2\\
	 & \leq \frac{\lambda_\Omega}{\eta}\|\xi\|_{H^{1/2}(\pa \B_\rho)}^2 + \frac{(1+\e) \lambda_1}{\lambda_2} \eta C \int_{\B_\rho} |\na v| ^2.
\end{align*} 
Choosing $\eta$ to be sufficiently small, we then find that 
\begin{equation}
\frac{1}{4} \int_{\B_\rho}|\na v|^2 \leq  \frac{\alpha_\Omega}{2\eta}\|\na u_{\B_\rho}\|_{L^\infty(\B_\rho)} \|\xi\|_{H^{1/2}(\pa \B_\rho)}^2.
\end{equation}
This, together with the Poincar\'{e} inequality on $\B_\rho$,  establishes \eqref{eqn: intermediate claim} and thus \eqref{eqn: hat u and uB}. We combine \eqref{eqn: hat u and uB} and \eqref{eqn: u vs pullback} to conclude the proof of the proposition.
\end{proof}
Finally we can prove Theorem~\ref{prop: quantitative for nearly spherical sets}.
\begin{proof}[Proof of Theorem~\ref{prop: quantitative for nearly spherical sets}]
	Together Theorem~\ref{th: deficit controls H1/2} and Proposition~\ref{prop: control efunc} directly imply the theorem when $\lambda(\Omega ) - \lambda (\B_\rho) \leq \e \lambda (\B_\rho)$, while the result holds trivially when  $\lambda(\Omega ) \geq (1+\e) \lambda (\B_\rho)$ by choosing the constant $c$ to be sufficiently small.
\end{proof}
\bigskip

\subsection{Torsional rigidity, the Kohler Jobin inequality, and Quantitative Stability for the Faber-Krahn inequality}\label{sec: SP}

Thus far, we have proven Theorem~\ref{thm: quantitative sperner restated} in the special case of nearly spherical sets. The remainder of the proof of Theorem~\ref{thm: quantitative sperner restated} relies on a selection principle argument, as described in the introduction. The proof of the selection principle is carried out in our companion paper \cite{AKN1}. We restate the main result there, stated only in the generality needed here, in Theorem~\ref{cor1.2 from reg paper} below. For regularity reasons discussed in \cite{AKN1}, the functional involved in the selection principle involves an additional, possibly unexpected term:  the torsional rigidity.

Given an open set $\Omega \subset S^{n}$, the {\it torsional rigidity} of $\Omega$ is defined by 
\begin{equation}\label{eqn:  torsion}
	\tor(\Omega) =
	\inf \left\{ \int_\Om \frac{1}{2} |\na u|^2 -\int_{\Omega}u : \ u \in W^{1,2}_0(\Omega)\right\} \,.
\end{equation}
Naturally, the same quantity can be defined for an open subset on any Riemannian manifold. We note that our sign convention for the torsional rigidity differs by a sign from the definition stated in other contexts such as \cite{KJ1, BrascoInvitation}. With our sign convention, $\rm{tor}(\Omega)\leq 0$. Note also that $\text{tor}(\Omega)$, like the first Dirichlet eigenvalue, is decreasing under set inclusion.
The infimum in $\tor(\Omega)$ is achieved by the  unique solution to the equation
\begin{equation}
	\begin{cases}
		-\Delta u = 1 & \text{ in } \Om \\
		u =0 & \text{ on } \pa \Om.
	\end{cases}
\end{equation}
Applying the  Polya-Szeg\"{o} inequality to any function in the minimization problem \eqref{eqn:  torsion}, one sees that balls minimize the torsional rigidity among all quasi-open sets of a fixed volume:
 \begin{equation}\label{eqn: st v}
 	\tor(\Omega) \geq \tor(\B) \qquad  \text{ if  }\  \vol(\Omega) = \vol(\B).
 \end{equation} 
Since the Polya-Szeg\"{o} principle holds on Euclidean space and hyperbolic space, balls also minimize the torsional rigidity among sets of a fixed volume in these spaces. 

Combined with the Faber-Krahn inequality, we see that balls uniquely minimize the functional $\lambda_1(\Omega) + \frak{T} \text{tor}(\Omega)$ for any $\frak{T}\geq 0$ among sets of a fixed volume. 
In our companion paper, we establish the following global-to-local stability theorem; see \cite[Corollary 1.2 and Remark 1.3]{AKN1}.
\begin{theorem}[Selection Principle]\label{cor1.2 from reg paper} Let $M$ denote the round sphere, Euclidean space, or hyperbolic space. Fix $n\geq 2$ and an interval $[v_0, v_1] \in (0, |M|)$. There exist $\frak{T}_0>0$ and $\e>0$ depending on $n, v_0,$ and $v_1$ such that the following holds. 

Suppose there exists a constant $c>0$ such that for all $v \in [v_0, v_1]$, $\frak{T}< \frak{T}_0 $ and for every $\e$-nearly spherical set parametrized over $\B$ where $|\B|=v$,  we have
	\begin{equation}\label{eqn: nss torsion}
c\left(  |\Omega \Delta \B|^2 + \int_{M} |u_\Omega - u_\B|^2   \right) \leq \lambda_1(\Omega) - \lambda_1(\B) + \frak{T}(\rm{tor}(\Omega) - \rm{tor}(\B))\,.
	\end{equation}
Then  for all open sets $\Omega$ with $|\Omega|= |\B|$,
\begin{equation}\label{eqn: final}
c\inf_{x \in M}\left(  |\Omega \Delta \B(x)|^2 + \int_{M} |u_\Omega - u_\B(x)|^2   \right) \leq \lambda_1(\Omega) - \lambda_1(\B) + \frak{T}(\rm{tor}(\Omega) - \rm{tor}(\B))\,,
\end{equation}
 where $B(x)$ is a ball of the same radius as $B$.
\end{theorem}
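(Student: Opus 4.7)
The argument will proceed by contradiction, following the selection principle framework of Cicalese-Leonardi \cite{CiLe12} and Brasco-De Philippis-Velichkov \cite{BDV15}. Suppose \eqref{eqn: final} fails for the constant $c$ provided by the hypothesis: there would then exist sequences $v_j \in [v_0, v_1]$, $\frak{T}_j \in [0, \frak{T}_0)$, and open sets $\Omega_j$ with $|\Omega_j| = v_j$ such that
\[
\lambda_1(\Omega_j) - \lambda_1(\B_j) + \frak{T}_j(\tor(\Omega_j) - \tor(\B_j)) < \tfrac{1}{j}\, D(\Omega_j)^2,
\]
where $D(\Omega)^2 := \inf_{x \in M}(|\Omega \Delta \B_j(x)|^2 + \int_M |u_\Omega - u_{\B_j(x)}|^2)$ and $\B_j$ is a fixed ball of volume $v_j$. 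After passing to subsequences, compactness combined with qualitative Faber-Krahn stability forces $d_j := D(\Omega_j) \to 0$.

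Next I would replace $\Omega_j$ by a minimizer $\Omega_j'$ of a penalized functional
\[
\mathcal{F}_j(\Omega) = \lambda_1(\Omega) + \frak{T}_j \tor(\Omega) + \eta_j \sqrt{d_j^4 + (D(\Omega)^2 - d_j^2)^2},
\]
taken over sets with $|\Omega| = v_j$, for a suitably chosen small parameter $\eta_j \to 0$. Existence of minimizers follows by standard lower semicontinuity arguments in the Sobolev convergence framework appropriate for the pair $(\lambda_1, \tor)$. The square-root penalty is smooth and its minimum value $d_j^2$ is achieved exactly when $D(\Omega)^2 = d_j^2$. Using $\Omega_j$ as a competitor, a brief calculation (together with the bound $\sqrt{d_j^4 + y^2} \ge d_j^2$) shows that $\Omega_j'$ still violates \eqref{eqn: final} (with a slightly adjusted constant) while simultaneously satisfying $D(\Omega_j') \sim d_j$ from the penalization term.

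The main obstacle is the regularity step: one must show that, for $j$ large, $\Omega_j'$ is $\varepsilon$-nearly spherical in $C^{2,\alpha}$ over a translate of $\B_j$. This is delicate precisely because the eigenfunction $L^2$-term in the penalty makes $\mathcal{F}_j$ a \emph{critical} perturbation of the Alt-Caffarelli-type functional $\lambda_1 + \frak{T}\tor$, so the classical regularity theory of Alt-Caffarelli does not apply directly. This is exactly the content of the companion paper \cite{AKN1}: sharp $\varepsilon$-regularity is developed for precisely this class of critically perturbed functionals, and under the assumption $d_j \to 0$ it is shown that minimizers converge to a ball in $C^{2,\alpha}$, becoming $\varepsilon$-nearly spherical for $j$ large.

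Finally, after translating so that the barycenter of $\Omega_j'$ coincides with the center of $\B_j$, the hypothesis \eqref{eqn: nss torsion} applied to $\Omega_j'$ yields
\[
c\, D(\Omega_j')^2 \le c\Big(|\Omega_j' \Delta \B_j|^2 + \int_M |u_{\Omega_j'} - u_{\B_j}|^2\Big) \le \lambda_1(\Omega_j') - \lambda_1(\B_j) + \frak{T}_j(\tor(\Omega_j') - \tor(\B_j)).
\]
Combining the minimality of $\Omega_j'$ against the competitor $\Omega_j$, the failure bound for $\Omega_j$, and the comparability $D(\Omega_j') \sim d_j$, the right-hand side is bounded above by $C j^{-1} D(\Omega_j')^2$. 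For $j$ large this contradicts the lower bound, completing the argument. The delicate point is the coordination between the scale $\eta_j$ of the penalty and the $\varepsilon$-regularity threshold from \cite{AKN1}, which dictates how the penalization must be calibrated.
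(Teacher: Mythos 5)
Your sketch matches the paper's treatment of this statement: the paper does not prove Theorem~\ref{cor1.2 from reg paper} in this article at all, but imports it directly from the companion paper \cite[Corollary 1.2 and Remark 1.3]{AKN1}, whose argument is precisely the penalization/selection-principle scheme you outline (contradiction sequence, penalized functional containing the torsion term and a smoothed distance penalty, $\e$-regularity for the critically perturbed Alt--Caffarelli-type functional, then application of the nearly spherical hypothesis). Since your proposal likewise defers the genuinely hard ingredients---existence of minimizers in an appropriate weak (quasi-open/capacitary) class, the regularity theory for the critical perturbation, and the $C^{2,\alpha}$ closeness to a ball---to \cite{AKN1}, it is an outline of the same proof rather than an independent argument, and in that sense it is consistent with what the paper does.
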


Although the form of Theorem~\ref{cor1.2 from reg paper} does not appear to be immediately compatible with proving Theorem~\ref{thm: quantitative sperner restated}, an essential point is that  the quantities 
\begin{equation}\label{2deltas}
\delta(\Omega) = \lambda_1(\Omega) - \lambda_1(\B), \qquad \hat{\delta}(\Omega) = \lambda_1(\Omega) - \lambda_1(\B) + \frak{T}\,\left(\text{tor}(\Omega) - \text{tor}(\B)\right)
\end{equation}
are equivalent.
 While it is immediate that $\delta(\Omega) \leq \hat{\delta}(\Omega)$, we must call upon a result of Kohler-Jobin, Theorem~\ref{thm: KJ inequality} below, to show that 
$ \hat{\delta}(\Omega) \leq C \delta(\Omega) $.

  Polya and Szeg\"{o} \cite{PSBook} conjectured  that among sets $\Omega\subset \mathbb{R}^n$ of a fixed torsional rigidity $\tor(\Omega)$, balls minimize the first Dirichlet eigenvalue $\lambda_1(\Omega).$  This conjecture was proven by Kohler-Jobin through the introduction of a new symmetrization technique in \cite{KJ1,KJ2} (see also \cite{KJ3}, as well as \cite{BrascoInvitation} for extensions to the $p$-Lapace energy). Fundamentally, the proof is underpinned only by the coarea formula and the isoperimetry of balls. Basic modifications of Kohler-Jobin's argument show that this fact, now known as the  Kohler-Jobin inequality, holds on all simply connected space forms. 

\begin{theorem}[Kohler-Jobin inequality on simply connected space forms]\label{thm: KJ inequality}
	Let $M$ denote the round sphere, Euclidean space, or hyperbolic space. Fix an open set $\Omega \subset M$ and let $\B^*_\Omega \subset M$ denote a geodesic ball with the unique radius such that $\tor(\Om) = \tor(\B^*_\Omega)$. Then
	\begin{equation}\label{eqn: KJ}
		\lambda_1(\Omega) \geq \lambda_1(\B^*_{\Omega}).
	\end{equation}
	Equality is achieved in \eqref{eqn: KJ} if and only if $\Omega$ is a ball of this radius.
\end{theorem}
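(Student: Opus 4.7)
The plan is to adapt the classical Kohler-Jobin symmetrization to the space-form setting. Given an open set $\Omega \subset M$, let $u \geq 0$ be its first Dirichlet eigenfunction and set $\Omega_t = \{u > t\}$ and $\lambda = \lambda_1(\Omega)$. Since $\tor(\cdot)$ is monotone under set inclusion, the map $t \mapsto \tor(\Omega_t)$ is continuous and monotone; hence for each $t$ there is a unique radius $\rho(t)$ with $\tor(B_{\rho(t)}) = \tor(\Omega_t)$, where $B_r$ denotes a geodesic ball of radius $r$ about a fixed basepoint in $M$. I would then define $u^*$ on $B_{\rho(0)}$ to be radial and decreasing with $\{u^* > t\} = B_{\rho(t)}$, so that $B_{\rho(0)} = B^*_\Omega$ by construction.

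Next I would extract the basic identities. Using $-\Delta u = \lambda u$, the divergence theorem on $\Omega_t$, and the coarea formula,
\[
\int_{\{u=t\}} |\nabla u|\, d\mathcal{H}^{n-1} = \lambda \int_{\Omega_t} u\, d\vol,
\qquad
\int_{\Omega_t} |\nabla u|^2 = \int_t^\infty \int_{\{u=s\}} |\nabla u|\, d\mathcal{H}^{n-1}\, ds,
\]
together with analogous identities for the torsion function of $\Omega_t$ (whose total mass equals $-2\,\tor(\Omega_t)$). Applied on a level set and combined with Hadamard-type variation, these express $\frac{d}{dt}\tor(\Omega_t)$ as a boundary integral involving $|\nabla u|^{-1}$, and give parallel representations for $u^*$ on the radially symmetric family $B_{\rho(t)}$.

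The heart of the proof is a pointwise-in-$t$ comparison. For each $t$, the isoperimetric inequality on the simply connected space form $M$ yields $|\partial \Omega_t| \geq |\partial B_{r_*(t)}|$, where $B_{r_*(t)}$ has the volume of $\Omega_t$, with equality only when $\Omega_t$ is a geodesic ball. Combining this with Cauchy–Schwarz,
\[
|\partial \Omega_t|^2 \leq \Big(\int_{\{u=t\}} |\nabla u|\, d\mathcal{H}^{n-1}\Big)\Big(\int_{\{u=t\}} |\nabla u|^{-1}\, d\mathcal{H}^{n-1}\Big),
\]
and the equi-torsion matching $\tor(\Omega_t) = \tor(B_{\rho(t)})$, I would derive a differential inequality that controls the level-set data of $\Omega_t$ by the corresponding data for $B_{\rho(t)}$. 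Integrating this against the representations above is designed to yield
\[
\frac{\int_{B^*_\Omega}|\nabla u^*|^2}{\int_{B^*_\Omega}(u^*)^2} \;\leq\; \lambda,
\]
so that $u^*$ is a trial function proving $\lambda_1(B^*_\Omega) \leq \lambda_1(\Omega)$. The equality case follows by tracing equality through the isoperimetric step: it forces each $\Omega_t$ to be a geodesic ball, and hence $\Omega$ itself to be a geodesic ball of radius $\rho(0)$.

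The principal obstacle is the curved-space form of this differential inequality. In $\mathbb{R}^n$, Kohler-Jobin's argument collapses to a clean ODE because $\tor(B_r)$, $|B_r|$, and $|\partial B_r|$ are explicit powers of $r$, so the constants produced by the isoperimetric-plus-Cauchy–Schwarz step are scale invariant. On $S^n$ and $H^n$ these are smooth, strictly monotone but non-power functions of $r$, so the ODE must be tracked with variable coefficients. Because the space-form isoperimetric inequality remains sharp with geodesic balls as unique equality cases and because $r \mapsto \tor(B_r)$ is smooth and strictly monotone, the signs and monotonicity needed for the comparison persist, and the Euclidean argument carries over once these coefficients are bookkept carefully.
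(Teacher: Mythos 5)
Your scheme is not the Kohler--Jobin symmetrization, and its central step is missing exactly where the real difficulty of this theorem lies. You replace each level set $\Omega_t=\{u>t\}$ by the geodesic ball $B_{\rho(t)}$ of equal torsional rigidity and then assert that isoperimetry, Cauchy--Schwarz, and the equi-torsion matching ``is designed to yield'' the Rayleigh-quotient bound; that differential inequality is never derived, and it cannot be derived from the stated ingredients. First, the quantity $\frac{d}{dt}\tor(\Omega_t)$, which determines $\rho'(t)$ and hence $|\na u^*|$, is not a function of the level-set data of $u$: up to sign and regularity issues, Hadamard's variation gives
\[
\frac{d}{dt}\tor(\Omega_t)\;=\;\frac12\int_{\{u=t\}}|\pa_\nu w_t|^2\,\frac{1}{|\na u|}\,d\mathcal{H}^{n-1},
\]
where $w_t$ is the torsion function of $\Omega_t$ --- an extra unknown on which the isoperimetric inequality and Cauchy--Schwarz applied to the level sets of the eigenfunction give no control, and no comparison with the corresponding quantity for $B_{\rho(t)}$ is available along these lines. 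Second, even if you could prove $\int|\na u^*|^2\le\int|\na u|^2$, that would not suffice: by the Saint-Venant inequality the equal-torsion ball is contained in the equal-volume ball, so $|B_{\rho(t)}|\le|\Omega_t|$ for every $t$, and the layer-cake formula gives $\int (u^*)^2\le\int u^2$. The denominator of the Rayleigh quotient therefore moves against you, and your outline supplies no mechanism for the compensating, quantified decrease of the numerator. This is precisely why the Kohler--Jobin inequality is strictly stronger than Faber--Krahn and why a level-set-by-level-set replacement of the eigenfunction (whether by equal-volume or equal-torsion balls) does not prove it.

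For comparison, the paper does not reprove the inequality at all: it invokes Kohler--Jobin's original argument \cite{KJ1,KJ2,KJ3}, in the form surveyed in \cite{BrascoInvitation}, which rests on a genuinely different rearrangement (built around a notion of modified torsional rigidity rather than an equal-torsion replacement of level sets), and observes that since that argument uses only the coarea formula and the isoperimetry of geodesic balls, it transfers to the round sphere and hyperbolic space with routine modifications. Your closing discussion of variable coefficients on $S^n$ and $H^n$ addresses a real but secondary issue of the transplantation; the gap in your proposal is already present in the Euclidean case. To repair it you would need to follow the actual Kohler--Jobin construction and verify that each of its steps survives without Euclidean scaling, rather than the equal-torsion level-set rearrangement sketched here.
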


Theorem~\ref{thm: KJ inequality} allows us to deduce the following comparison between the Faber-Krahn deficit and the torsional rigidity  deficit and in particular show that $\delta(\Om)$ and $\hat{\delta}(\Om)$ are comparable.
\begin{corollary}\label{cor: KJ deficit} Let $M$ denote the round sphere, Euclidean space, or  hyperbolic space and fix $0 < \bar v <\bar V < \vol(M)$. There exists a constant $C=C(\bar v, \bar V)>0$  so that for any  $\Omega\subset M$ with $\vol(\Omega) =v_0 \in [\bar v, \bar V]$,
	\begin{equation}\label{eqn: deficit comparison}
		C\left( \lambda_1( \Omega) - \lambda_1(\B)\right) \geq \rm{tor}(\Om)-\rm{tor}(\B)
	\end{equation}
	where $\B$ is a geodesic ball with $\vol(\B) = v_0.$ In particular, $\hat{\delta}(\Omega) \leq \tilde{C} \delta(\Omega)$, where $\tilde{C} = 1+ C\frak{T}.$ 
\end{corollary}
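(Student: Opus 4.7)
The plan is to leverage the Kohler-Jobin inequality (Theorem~\ref{thm: KJ inequality}) to reduce the desired comparison between the deficits to a one-variable calculus estimate on the smooth functions $\lambda(r) := \lambda_1(\B_r)$ and $\tau(r) := \tor(\B_r)$, where $\B_r$ denotes a geodesic ball of radius $r$.

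Fix $\Omega$ with $|\Omega| = v_0$, let $\B = \B_{r_0}$ be the geodesic ball of volume $v_0$, and let $\B^*_\Omega = \B_{r_\Omega}$ be the geodesic ball produced by Theorem~\ref{thm: KJ inequality}, so that $\tau(r_\Omega) = \tor(\Omega)$. Since $\tau$ is strictly decreasing in $r$ and the Polya--Szeg\"{o} bound \eqref{eqn: st v} gives $\tor(\Omega) \geq \tau(r_0)$, one must have $r_\Omega \leq r_0$. Theorem~\ref{thm: KJ inequality} then yields $\lambda_1(\Omega) \geq \lambda(r_\Omega)$, so that
\[
\lambda_1(\Omega) - \lambda_1(\B) \;\geq\; \lambda(r_\Omega) - \lambda(r_0) \;=\; \int_{r_\Omega}^{r_0} |\lambda'(r)|\,dr,
\]
while the analogous identity holds as an equality for the torsional rigidity deficit. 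Hence \eqref{eqn: deficit comparison} will follow from the pointwise ratio bound
\[
|\tau'(r)| \;\leq\; C\, |\lambda'(r)| \qquad \text{for all } r \in (0, r_0],
\]
for a constant $C$ depending only on $\bar v$, $\bar V$, $n$, and $M$, after which integration over $[r_\Omega,r_0]$ completes the argument.

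The main obstacle is obtaining this ratio bound uniformly as $r \to 0^+$, since each of $|\lambda'|$ and $|\tau'|$ individually diverges or vanishes there. On any compact subinterval $[\varepsilon, r_0]$ the ratio is a continuous function on a compact set and is trivially bounded, so the only subtlety is the small-$r$ regime. Here I will use that each of the three simply connected space forms is asymptotically Euclidean near a point: a rescaling argument (or, equivalently, the power-series expansions of the Bessel/Legendre-type special functions that appear in the radial ODEs for the torsion function and the first eigenfunction on $\B_r$) yields $\lambda(r) = c_1 r^{-2}(1 + o(1))$ and $\tau(r) = -c_2 r^{n+2}(1 + o(1))$ as $r \to 0$, with the same leading-order coefficients as in Euclidean space. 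Differentiating then gives $|\tau'(r)|/|\lambda'(r)| \lesssim r^{n+4} \to 0$, which combined with continuity on $[\varepsilon, r_0]$ produces the uniform bound.

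The second assertion $\hat{\delta}(\Omega) \leq \tilde{C}\,\delta(\Omega)$ is an immediate consequence: from the definitions \eqref{2deltas} and \eqref{eqn: deficit comparison},
\[
\hat{\delta}(\Omega) - \delta(\Omega) = \frak{T}\bigl(\tor(\Omega) - \tor(\B)\bigr) \leq \frak{T}\,C\,\delta(\Omega),
\]
so we may take $\tilde{C} = 1 + \frak{T}\,C$, completing the proof.
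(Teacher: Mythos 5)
Your reduction is the same one the paper makes: apply Kohler--Jobin to replace $\Omega$ by the ball $\B^*_\Omega=\B_{r_\Omega}$ of equal torsional rigidity, note $r_\Omega\le r_0$ via \eqref{eqn: st v}, and compare the two radial deficits. Where you diverge from the paper --- and where the gap lies --- is the comparison near $r=0$. You want the pointwise bound $|\tau'(r)|\le C\,|\lambda'(r)|$ on all of $(0,r_0]$, and you obtain it for small $r$ by writing $\lambda(r)=c_1r^{-2}(1+o(1))$, $\tau(r)=-c_2r^{n+2}(1+o(1))$ and then ``differentiating.'' That inference is not valid: asymptotic relations with $o(1)$ errors cannot be differentiated, and nothing you cite controls $\lambda'(r)$ or $\tau'(r)$ individually as $r\to0$ (a function can satisfy $\lambda(r)=c_1r^{-2}(1+o(1))$ while its derivative oscillates). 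Note also that the small-$r$ regime cannot be dismissed, since $\tor(\Omega)$ can be arbitrarily close to $0$ at fixed volume, so $r_\Omega$ really does range over all of $(0,r_0]$. To repair your route you would need genuinely differentiable expansions --- e.g.\ analytic dependence of $\lambda_1(\B_r)$ and $\tor(\B_r)$ on $r$ with control of the error terms, or the Hadamard variational formulas $\frac{d}{dr}\lambda_1(\B_r)=-\int_{\pa \B_r}|\na u_{\B_r}|^2$ and $\frac{d}{dr}\tor(\B_r)=-\frac12\int_{\pa \B_r}|\na w_r|^2$ combined with a rescaling/blow-up argument --- none of which appears in your sketch.

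The paper sidesteps derivatives at small $r$ altogether, and you could too: it compares the functions rather than their derivatives. Setting $f_R(r)=\lambda_1(\B_r)-\lambda_1(\B_R)$ and $g_R(r)=\tor(\B_r)-\tor(\B_R)$, one has $f_R(r)\to+\infty$ as $r\to0$ while $g_R(r)\le -\tor(\B_R)$ stays bounded, so $g_R\le f_R$ holds trivially on some $(0,\e]$; on $[\e,R]$ only $m=\min(-f_R')>0$ and $M=\max(-g_R')<\infty$ are used, together with the fundamental theorem of calculus, to get $g_R\le (M/m)\,f_R$, and uniformity in $R$ over the compact range of radii follows by compactness. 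If you replace your small-$r$ derivative comparison by this observation, the rest of your argument (the compact-interval bound and the final deduction $\hat\delta(\Omega)\le(1+C\frak{T})\,\delta(\Omega)$, which is correct and identical to the paper's) goes through.
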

\begin{remark}
	{\rm 
	One may use the  scaling invariance of Euclidean space to more directly conclude Corollary~\ref{cor: KJ deficit} from  the Kohler-Jobin inequality; see \cite[Proposition~2.1]{BDV15}. For the sake of a unified approach, we simply include Euclidean space in Corollary~\ref{cor: KJ deficit}.
	}
\end{remark}
\begin{proof}[Proof of Corollary~\ref{cor: KJ deficit}]
	As in Theorem~\ref{thm: KJ inequality}, let $\B^*_\Omega$ denote a ball with the uniquely chosen radius such that $\tor(\Omega) = \tor(\B^*_\Omega).$ Theorem~\ref{thm: KJ inequality} implies that
		\begin{align*}\label{eqn: basic app of KJ}
		\lambda_1(\Omega) - \lambda_1(\B) = \left[ \lambda_1(\B^*_\Omega) - \lambda_1(\B)\right] + \left[ \lambda_1(\Omega) - \lambda_1(\B^*_\Omega)\right]\geq  \lambda_1(\B^*_\Omega) - \lambda_1(\B)\,,
	\end{align*}
and so to establish \eqref{eqn: deficit comparison}, it suffices to prove that $C(\lambda_1(\B^*_\Omega) - \lambda_1(\B)) \geq  \tor(\B^*_\Omega) -  \tor(\B) $. Noting that the radius of $\B^*_\Omega$ is smaller than that of $\B$, the corollary will be proven once we establish the following claim. 

{\it Claim: } Fix $0<\bar r <\bar R$, with $\bar R<\pi$ if $M= S^n$, and for each $R \in (\bar r , \bar R)$, define the functions 
\begin{align*}
	f_R(r)  &=\lambda_1(\B_r) - \lambda_1(\B_R)\,,\\
	g_R(r) &=  \tor(\B_r)-\tor(\B_R)\,.
\end{align*}
There is a constant $C = C(\bar r, \bar R)$ such that for all $r \in (0,R)$,
	\begin{equation}\label{eqn: f g}
	g_R(r) \leq C f_R(r)\,.
	\end{equation}
	
	We first establish \eqref{eqn: f g} with a constant depending on $R$. Notice that $\lim_{r\to 0} f_R(r) = +\infty$ and $g_R(0) = -\tor(\B_R)$. So, we may find $\e =\e(R)$ such that $g_R(r) \leq f_R(r) $ for all $r \in (0,\e]$. Next, observe that the functions $f_R$ and $g_R$ are strictly decreasing and differentiable. Define the constants
	\begin{align*}
	m& := \min \{ -f'_R(r) \, :\,  r \in [\e, R]\}>0, \\
	M &:= \max \{ -g'_R(r)  \, :\,  r \in [\e, R]\}<\infty\,.
	\end{align*}
By the fundamental theorem of calculus and $f(R) = g(R) = 0$, we see that
	\begin{align*}
		f_R(r) & = \int_r^R -f_R'(r) \, dr \geq m(R-r),\\
		g_R(r) & = \int_r^R -g'_R(r) \, dr \leq M(R-r),
	\end{align*}
	and so $g_R(r) \leq  f_R(r)\times M/m$ for $r \in [\e,R]$. Letting $C = \max\{ M/m, 1\}>0$, we see that \eqref{eqn: f g} holds for $C = C(R)$. 
	Finally, by compactness, it is clear that we may choose $C$ uniformly for all $R \in [r_0, R_0]$.	This completes the proof of the corollary.
\end{proof}

Together Theorem~\ref{prop: quantitative for nearly spherical sets}, Theorem~\ref{cor1.2 from reg paper}, and Corollary~\ref{cor: KJ deficit} lead to the proof of Theorem~\ref{thm: quantitative sperner restated}: 
\begin{proof}[Proof of Theorem~\ref{thm: quantitative sperner restated}]
Combining Theorem~\ref{prop: quantitative for nearly spherical sets}, we see that \eqref{eqn: nss torsion} holds for nearly spherical sets, and so by Theorem~\ref{cor1.2 from reg paper}, we see that \eqref{eqn: final} holds for all open sets $\Omega$ with $|\Omega| = |\B|$. Corollary~\ref{cor: KJ deficit} applied to the right-hand side of \eqref{eqn: final} concludes the proof.
\end{proof}

\section*{Acknowledgements} MA was partially supported by Simons Collaboration Grant ID 637757.
During the course of this work, RN was partially supported by the National Science Foundation under Grant No. DMS-1901427, as well as Grant No. DMS-1502632 ``RTG: Analysis on manifolds'' at Northwestern University and Grant No. DMS-1638352 at the Institute for Advanced Study. 
This project originated from discussions at the 2018 PCMI Summer Session on Harmonic Analysis.


\bibliographystyle{amsplain}
\bibliography{quantitativeacfref}

\end{document}